\DeclareMathAlphabet{\mathcal}{OMS}{cmsy}{m}{n} 
\def\enumhook{}
\def\itemhook{}
\def\descripthook{}
\def\enumerate{%
  \ifnum \@enumdepth >\thr@@\@toodeep\else
    \advance\@enumdepth\@ne
    \edef\@enumctr{enum\romannumeral\the\@enumdepth}%
      \expandafter
      \list
        \csname label\@enumctr\endcsname
        {\usecounter\@enumctr\def\makelabel##1{\hss\llap{##1}}%
          \enumhook \csname enumhook\romannumeral\the\@enumdepth\endcsname}%
  \fi}
\def\itemize{%
  \ifnum \@itemdepth >\thr@@\@toodeep\else
    \advance\@itemdepth\@ne
    \edef\@itemitem{labelitem\romannumeral\the\@itemdepth}%
    \expandafter
    \list
      \csname\@itemitem\endcsname
      {\def\makelabel##1{\hss\llap{##1}}%
        \itemhook \csname itemhook\romannumeral\the\@itemdepth\endcsname}%
  \fi}
\renewcommand{\itemhook}{\setlength{\topsep}{1pt}%
\setlength{\itemsep}{-1pt}}
\renewcommand{\enumhook}{\setlength{\topsep}{1pt}%
\setlength{\itemsep}{-1pt}}
\newcommand\ie{\emph{i.e.}}
\newcommand{\sk}{\smallskip}
\newcommand{\mk}{\medskip}
\newcommand{\bk}{\bigskip}
\renewcommand{\emptyset}{\ensuremath{\varnothing}}     
\newcommand{\G}{\mathbf{G}}
\newcommand{\B}{\mathbf{B}}
\newcommand{\U}{\mathbf{U}}
\newcommand{\T}{\mathbf{T}}
\renewcommand{\P}{\mathbf{P}}
\renewcommand{\L}{\mathbf{L}}
\newcommand{\X}{\mathrm{X}}
\newcommand{\Y}{\mathrm{Y}}
\newcommand{\Hc}{\mathrm{H}_c}
\newcommand{\Rgc}{\mathrm{R}\Gamma_c}
\newcommand{\tRgc}{\widetilde{\mathrm{R}}\Gamma_c}
\newcommand{\gRgc}{\mathrm{G}\Gamma_c}
\newcommand\Ext{\mathrm{Ext}}
\newcommand\Ho{\mathrm{Ho}}
\newcommand\Res{\mathrm{Res}}
\newcommand\Ind{\mathrm{Ind}}
\newcommand\Aut{\mathrm{Aut}}
\newcommand\Stab{\mathrm{Stab}}
\newcommand\Br{\mathrm{Br}}
\newcommand\iso{\ \widetilde \to\ }
\newcommand\mMod{\text{-}\mathrm{mod}}
\newcommand\mstab{\text{-}\mathrm{stab}}
\newcommand\mperf{\text{-}\mathrm{perf}}
\newcommand\mproj{\text{-}\mathrm{proj}}
\newcommand\mPROJ{\text{-}\mathrm{Proj}}
\newcommand\mMOD{\text{-}\mathrm{Mod}}
\newcommand\mlocfin{\text{-}\mathrm{locfin}}
\newcommand\Comp{\mathrm{Comp}}
\newcommand\Hom{\mathrm{Hom}}
\newcommand\opp{\mathrm{opp}}
\newcommand\cone{\mathrm{cone}}
\newcommand\hocolim{\mathrm{hocolim}}
\newlength{\leftlength}
\newlength{\rightlength}
\newlength{\calculskip}
\newcommand{\calculvskip}[1]{%
  \ifthenelse{#1 = 0}{\setlength{\calculskip}{0pt}}{}%
  \ifthenelse{#1 = 1}{\setlength{\calculskip}{\smallskipamount}}{}%
  \ifthenelse{#1 = 2}{\setlength{\calculskip}{\medskipamount}}{}%
  \ifthenelse{#1 = 3}{\setlength{\calculskip}{\bigskipamount}}{}%
  \ifthenelse{#1 = 4}{\setlength{\calculskip}{1cm}}{}%
  \vskip\calculskip
}
\newcommand{\leftcentersright}[4][2]{%
        \settowidth{\leftlength}{#2}%
        \settowidth{\rightlength}{#4}%
        \calculvskip{#1}
        \noindent#2\hskip-\leftlength%
        \hfill#3\hfill
        \mbox{}\hskip-\rightlength#4%
        \vskip\calculskip%
        }
\newcommand{\centers}[2][2]{\leftcentersright[#1]{}{#2}{}}
\newcommand{\leftcenters}[3][2]{\leftcentersright[#1]{#2}{#3}{}}
\def\svhline{%
  \noalign{\ifnum0=`}\fi\hrule \@height2\arrayrulewidth \futurelet
   \reserved@a\@xhline}
\def\hlinewd#1{%
\noalign{\ifnum0=`}\fi\hrule \@height #1 %
\futurelet\reserved@a\@xhline}
\numberwithin{equation}{section}
\newtheorem{prop}[equation]{Proposition}  
\newtheorem{thm}[equation]{Theorem}
\newtheorem{lem}[equation]{Lemma}  
\newtheorem{cor}[equation]{Corollary}
\newtheorem{conj}[equation]{Conjecture}
\theoremstyle{definition}
\newtheorem{exemple}[equation]{Example}  
\newtheorem{rmk}[equation]{Remark}
\begin{document}

\title{Coxeter orbits and Brauer trees III}

\author{Olivier Dudas\footnote{The first author is supported by the EPSRC, Project No EP/H026568/1 and by Magdalen College, Oxford} \and Rapha\"el Rouquier}

\maketitle

\begin{abstract} 
This article is the final one of a series of articles (cf. \cite{Du3,Du4})
on certain blocks of
modular representations of finite groups of Lie type and the associated geometry.
We prove the conjecture of Brou\'e on derived equivalences induced by the complex
of cohomology of Deligne-Lusztig varieties in the case of Coxeter elements whenever the defining characteristic is good. We also
prove a conjecture of Hi\ss, L\"ubeck and Malle on the Brauer trees of the corresponding
blocks. As a consequence, we determine the Brauer trees (in particular, the decomposition matrix) of the principal $\ell$-block of $E_7(q)$ when $\ell \mid \Phi_{18}(q)$ and $E_8(q)$ when $\ell \mid \Phi_{18}(q)$ or $\ell \mid \Phi_{30}(q)$.
\end{abstract}

\section*{Introduction}

This article is the final one of a series of articles on certain blocks of
modular representations of finite groups of Lie type and the associated geometry.
This series gives the first instance of use of the mod-$\ell$ cohomology
of Deligne-Lusztig varieties to determine new decomposition matrices of principal blocks for finite groups
of Lie type. 

\sk

In the first two articles \cite{Du3,Du4}, the first author studied the integral $\ell$-adic
cohomology of Deligne-Lusztig varieties associated with Coxeter elements. For
suitable primes $\ell$, Brou\'e \cite{BMa2} has conjectured that the complex of cohomology provides a solution to his abelian defect group conjecture for the principal block.
On the other hand, Hi\ss, L\"ubeck and Malle have conjectured that the Brauer tree
of the block can be recovered from the rational $\ell$-adic cohomology, endowed with
the action of the Frobenius \cite{HLM}. In \cite{Du3,Du4}, the relation between these conjectures was
studied, and Brou\'e's conjecture was shown to hold for Coxeter elements,
with the some possible exceptions of types $E_7$ and $E_8$. These are also the cases for
which the conjecture of Hi\ss, L\"ubeck and Malle was still open. We give here a
general proof of that conjecture and, as a consequence of \cite{Du3,Du4}, of Brou\'e's conjecture. The new ingredient is the study of the
complex of cohomology, and the corresponding functor, in suitable stable categories. This
requires proving first some finiteness properties for the complex, when viewed as
a complex of $\ell$-permutation modules with an action of the Frobenius endomorphism.
A key input from \cite{Du3,Du4} is the property of the mod-$\ell$ cohomology associated
with certain "minimal"
eigenvalues of Frobenius to be concentrated in one degree. As a consequence,
we determine the Brauer trees for the finite reductive groups of type $E_7$ and $E_8$, for
primes dividing the cyclotomic polynomial associated with the Coxeter number. We also
obtain the previously unknown planar embeddings for the trees associated with the groups
of type ${}^2 F_4$ and $F_4$. From \cite{Du3,Du4}, we deduce that Brou\'e's conjecture holds
in the case of Coxeter elements whenever $p$ is a good prime. Note that David Craven has recently proposed a conjecture for the Brauer trees
of all unipotent blocks of finite groups of Lie type, together with
a conjecture for the perversity function associated with the
equivalences predicted by Brou\'e \cite{Cra1,Cra2}.

\bk

Let us describe in detail the structure of this article. 
In the first section, we start with an analysis of \emph{good} algebras, \ie,
algebras for which
every bounded complex with finite-dimensional cohomology is quasi-isomorphic
to a  complex with finite-dimensional components. Given a group, its group algebra is good over arbitrary finite fields if
and only if the group is good, \ie, the cohomology of the group and
its profinite completion agree for any finite module. Consider a group $\Upsilon$ with a finite normal subgroup $H$ such that $\Upsilon/H$ is good and let $k$ be a finite field. We show that a complex of $k\Upsilon$-modules whose restrictions to $kH$ is perfect is quasi-isomorphic to a bounded complex of $k\Upsilon$-modules whose restrictions to $kH$ are finitely generated and projective.
We apply this to the complex of  cohomology of an algebraic variety $X$
acted on by a monoid $\Upsilon^+$ 
acting by invertible transformations of the \'etale site, where $\Upsilon$ is the group associated with $\Upsilon^+$. Let $\ell$ be a prime invertible on $X$. We show that the complex of mod-$\ell$
cohomology of $X$ can be represented by a bounded complex of
finite $\Upsilon$-modules which are direct summands of permutation
modules for $H$ (building on \cite{Ri5,Rou}). This is motivated by the geometric form of Brou\'e's abelian
defect conjecture: given $G$ a finite group of Lie type and
$B$ a block with abelian defect group $D$, there should exist a Deligne-Lusztig
variety $X$ acted on by $H=G\times C_G(D)^\opp\lhd \Upsilon^+$, where
$\Upsilon / H$ is a  braid group associated with a complex reflection group, and such that the complex of mod-$\ell$ cohomology of $X$ provides a derived equivalence between the principal blocks of $G$ and $N_G(D)$, the action of which arises from $\Upsilon/G$. The results of the first section apply, at least when
the complex reflection group does not have exceptional irreducible
components of dimension $\ge 3$, as the corresponding braid group is good.

\sk

In the second section, we consider a reductive connected algebraic group $\G$
endowed with an endomorphism $F$ a power of which is a Frobenius endomorphism. We
study the complex of cohomology of the Deligne-Lusztig
variety associated with a parabolic subgroup with an $F$-stable Levi
complement $\mathbf{L}$. Under the assumption that the Sylow $\ell$-subgroups of $\G^F$ are cyclic, and that $\mathbf{L}$ is the centraliser of one of them, we study the generalised eigenspaces of the Frobenius on the complex of cohomology (in the derived equivalence situation, they correspond to the images of the simple modules for $N_G(D)$).  We determine their class in the stable category of $\G^F$.

\sk

The third section is devoted to mod-$\ell$ representations of $\mathbf{G}^F = \G(\mathbb{F}_q)$,
where $\G$ is simple and the multiplicative
order of $q$ modulo $\ell$ is the Coxeter number of $(\mathbf{G},F)$ (with a suitable modification for Ree and Suzuki groups). We show that the knowledge of the stable
equivalence
induced by the Coxeter Deligne-Lusztig variety, together with the
vanishing results of \cite{Du4}, determine Green's walk around
the Brauer tree of the principal block, as predicted by Hi\ss-L\"ubeck-Malle \cite{HLM}. We also show how to determine the Brauer trees of the non-principal blocks. Finally, we draw the new Brauer
trees for the types ${^2F}_4$, $F_4$, $E_7$ and $E_8$.

\section{Finiteness of complexes of chains}

\subsection{Good algebras}


\noindent \textbf{\thesubsection.1 Locally finite modules.\label{sect:locfinite}} Let $k$ be a field. Given $B$ a $k$-algebra, we denote by $B\mMOD$ the category of left $B$-modules, by $B\mMod$
the category of $B$-modules that are finite-dimensional over $k$ and
by $B\mlocfin$ the category of locally finite $B$-modules, \ie,
$B$-modules which are union of $B$-submodules
in $B\mMod$. These are Serre subcategories of $B\mMOD$. We denote by $B\mPROJ$ (resp. $B\mproj$) the category of projective (resp. finitely generated projective) $B$-modules.

\sk

Given $\mathcal{C}$ an additive category, we denote by
$\mathrm{Comp}(\mathcal{C})$ its category of complexes and by
$\mathrm{Comp}^b(\mathcal{C})$ its subcategory of  bounded complexes. We denote by $\mathrm{Ho}(\mathcal{C})$ the
homotopy category of complexes of $\mathcal{C}$. 

\sk

Assume now $\mathcal{C}$ is an abelian category. Let $C\in\mathrm{Comp}(\mathcal{C})$ and let $n\in\mathbb{Z}$. We put 

\leftcenters{and }{ $\begin{array}[b]{l} \tau_{\le n}C\, =\, \cdots\longrightarrow C^{n-2}\longrightarrow C^{n-1}\longrightarrow \ker d^n\longrightarrow 0 \\[6pt]
\tau_{\ge n}C\, =\, 0\longrightarrow \mathrm{coker} d^{n-1}\longrightarrow C^{n+1}\longrightarrow C^{n+1}\longrightarrow \cdots. \end{array}$}

The derived category of $\mathcal{C}$ will be denoted by $D(\mathcal{C})$. Given $\mathcal{I}$ a subcategory of $\mathcal{C}$,
we denote by $D_{\mathcal{I}}(\mathcal{C})$ the full subcategory of
$D(\mathcal{C})$  of complexes with cohomology  in $\mathcal{I}$. We put $\mathrm{Ho}(B) = \mathrm{Ho}(B\mMOD)$ and $D(B) = D(B\mMOD)$. Recall that an object of $D(B)$ is \emph{perfect} if it is isomorphic to an object of $\mathrm{Comp}^b(B\mproj)$. We refer to \cite[\S 8.1]{Ke} for basic definitions and properties of unbounded derived categories.

\begin{lem}
\label{le:fininlocfin}
The category $D(B\mlocfin)$ is a  triangulated category closed under direct sums and  it is generated by $B\mMod$ as such.
\end{lem}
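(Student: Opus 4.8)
The plan is to verify three separate assertions: that $D(B\mlocfin)$ is triangulated, that it is closed under direct sums (taken inside $D(B)$), and that $B\mMod$ generates it as a triangulated category with direct sums. The first is immediate: $B\mlocfin$ is a Serre (indeed abelian) subcategory of $B\mMOD$, so its derived category is triangulated in the usual way, and the cone of a morphism of complexes with locally finite cohomology again has locally finite cohomology since $B\mlocfin$ is closed under subquotients and extensions. For closure under direct sums, I would argue that a direct sum (in $\Comp(B\mMOD)$, hence in $D(B)$) of complexes with cohomology in $B\mlocfin$ again has cohomology in $B\mlocfin$, because cohomology commutes with direct sums and $B\mlocfin$ is closed under arbitrary direct sums — a direct sum of locally finite modules is visibly a union of its finite-dimensional submodules. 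So $D(B\mlocfin)$, realized as the full subcategory $D_{B\mlocfin}(B)$ of $D(B)$, is a triangulated subcategory closed under direct sums.

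The substantive point is the generation statement. Let $\mathcal{T}$ denote the smallest triangulated subcategory of $D(B\mlocfin)$ containing $B\mMod$ and closed under direct sums; I must show $\mathcal{T} = D(B\mlocfin)$. The first reduction is to truncation: any $C \in D(B\mlocfin)$ is the homotopy colimit of its truncations $\tau_{\le n}C$, and $\hocolim$ is built from a direct sum and a single cone (the telescope construction), so it suffices to treat bounded-above complexes, and then — peeling off cohomology one degree at a time via the triangles $\tau_{\le n-1}C \to \tau_{\le n}C \to H^n(C)[-n]$ and again taking a homotopy colimit — it suffices to show each object $M[i]$ lies in $\mathcal{T}$ for $M \in B\mlocfin$ and $i \in \mathbb{Z}$. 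Since $\mathcal{T}$ is closed under shifts, the claim reduces to: every locally finite $B$-module $M$, viewed as a complex in degree $0$, lies in $\mathcal{T}$. For this, write $M = \bigcup_\alpha M_\alpha$ as a filtered union of finite-dimensional submodules; then $M = \hocolim_\alpha M_\alpha$ in $D(B)$ (filtered colimits of modules are exact, so the derived and underived colimits agree, and the homotopy colimit of a filtered diagram computes the colimit when the diagram is one of modules), and the telescope presenting this homotopy colimit uses only a direct sum $\bigoplus_\alpha M_\alpha$ of objects of $B\mMod$ and one mapping cone. Hence $M \in \mathcal{T}$.

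The step I expect to be the main obstacle is the careful handling of the homotopy colimits: one must check that for a \emph{filtered} system of modules the homotopy colimit in $D(B)$ really does coincide with the ordinary colimit (this is where exactness of filtered colimits over a ring enters), and that the truncation tower $(\tau_{\le n}C)_n$ has homotopy colimit $C$ — this is standard (\cite[\S 8.1]{Ke}) but needs to be invoked correctly, in particular noting that the relevant transition maps are genuine maps of complexes so the naive telescope computes $\hocolim$. Once these two colimit computations are in place, the rest is the bookkeeping of triangulated closure properties already assembled above.
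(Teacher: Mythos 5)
There are two genuine problems. The first is a misidentification of the category in question: with the paper's conventions, $D(B\mlocfin)$ is the derived category of the abelian category $B\mlocfin$, not the full subcategory $D_{B\mlocfin}(B\mMOD)$ of $D(B)$ in which you run your argument. These are not interchangeable without proof: comparing the derived category of a Serre subcategory with the corresponding full subcategory of the ambient derived category is precisely the delicate point governed by Lemmas \ref{lem:fgtoloc} and \ref{le:finiteExt} (if the unbounded comparison were always an equivalence, every algebra would be good), and the lemma is later invoked for $D(B\mlocfin)$ itself in the proof of Lemma \ref{le:finiteindex}. The argument must therefore live in $\Comp(B\mlocfin)$; for the existence of direct sums this is in fact easier than what you do --- $B\mlocfin$ is closed under direct sums, hence so is $\Comp(B\mlocfin)$, and these descend to $D(B\mlocfin)$ by \cite[Lemma 1.5]{BoNee}.

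The second, more serious, gap is your reduction of a bounded-above complex to its cohomology objects. The triangles $\tau_{\le n-1}C\longrightarrow\tau_{\le n}C\longrightarrow H^n(C)[-n]$ peel cohomology off the top, but when $H^i(C)\neq 0$ for $i\to-\infty$ the process never terminates, and the tower you would need to assemble, $(\tau_{\ge n}C)_{n\to-\infty}$, converges to $C$ as a homotopy \emph{limit}, not a colimit; homotopy limits are not available in a subcategory closed only under direct sums, so ``again taking a homotopy colimit'' does not describe a valid construction here. The correct device is the stupid truncation: for $C$ bounded above, the subcomplexes $\sigma_{\ge n}C=(0\to C^n\to C^{n+1}\to\cdots)$ form an increasing chain of \emph{bounded} subcomplexes with union $C$, so $C\simeq\hocolim_{n\to-\infty}\sigma_{\ge n}C$, and each $\sigma_{\ge n}C$ is a finite iterated extension of its terms $C^i[-i]$, which are objects of $B\mlocfin$ --- one reduces to the terms of the complex, not to its cohomology. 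Your final step (a locally finite module is a filtered union of finite-dimensional submodules) is the right idea, but note that the one-cone telescope only presents \emph{sequential} homotopy colimits; for a possibly uncountable filtered system you should instead resolve $M$ on the left by direct sums of objects of $B\mMod$ (every locally finite module is a quotient of such a direct sum, with locally finite kernel) and apply the $\sigma_{\ge n}$ argument to that resolution.
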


\begin{proof}
The category $B\mlocfin$ is closed under direct sums, hence $\Comp(B\mlocfin)$
is closed under direct sums. It follows that $D(B\mlocfin)$ is closed under direct
sums and the canonical functor $\Comp(B\mlocfin)\longrightarrow D(B\mlocfin)$ commutes with
direct sums \cite[Lemma 1.5]{BoNee}. Let $C\in\Comp(B\mlocfin)$. We have
$\hocolim_{n\to\infty} \tau_{\le n}(C)\simeq C$, \ie, there is a distinguished triangle

\centers{$\bigoplus \tau_{\le n}(C)\longrightarrow\bigoplus \tau_{\le n}(C)\longrightarrow C\rightsquigarrow.$}

\noindent If $C$ is right bounded, then $\hocolim_{n\to -\infty} \sigma_{\ge n}C\iso C$, where
$\sigma_{\ge n}C=0\longrightarrow C^n\to C^{n+1}\longrightarrow\cdots$ is the subcomplex of $C$ obtained by stupid truncation. It follows that $B\mlocfin$ generates $D(B\mlocfin)$ as
a triangulated category closed under direct sums.

\sk

Since $\hocolim_{n\to-\infty} \sigma_{\ge n}C\simeq M$, we deduce that $M$ is  is in the smallest triangulated subcategory of $D(B\mlocfin)$
closed under direct sums and containing $B\mMod$, and the lemma follows.
\end{proof}

\begin{lem}\label{lem:fgtoloc}
The canonical functor $D^b(B\mMod)\longrightarrow D_{B\mMod}^b(B\mlocfin)$ is an equivalence.
\end{lem}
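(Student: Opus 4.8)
The plan is to derive both halves of the equivalence from one input: \emph{every bounded complex $C$ of locally finite $B$-modules with finite-dimensional cohomology in each degree contains a bounded subcomplex $D$ of finite-dimensional modules for which $D\hookrightarrow C$ is a quasi-isomorphism.} Granting this, essential surjectivity is clear. For fullness, take $C,D\in\Comp^b(B\mMod)$: a morphism $C\to D$ in $D^b(B\mlocfin)$ is represented by a roof $C\xleftarrow{s}C'\xrightarrow{u}D$ with $C'$ a bounded complex of locally finite modules and $s$ a quasi-isomorphism; then $C'$ has finite-dimensional cohomology, so it contains a subcomplex $C''\in\Comp^b(B\mMod)$ with $C''\hookrightarrow C'$ a quasi-isomorphism, and precomposing both legs of the roof with this inclusion gives an equivalent roof lying entirely in $\Comp^b(B\mMod)$. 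Faithfulness is the same argument in reverse: if a chain map $f\colon C\to D$ becomes zero in $D^b(B\mlocfin)$, there is a quasi-isomorphism $s\colon C'\to C$ from a bounded complex of locally finite modules with $fs$ null-homotopic; restricting $s$ and the homotopy to a finite-dimensional subcomplex of $C'$ as above shows that $f$ is already zero in $D^b(B\mMod)$.

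To produce $D$, say $C$ is concentrated in degrees $[a,b]$, and construct the terms $D^i\subseteq C^i$ by descending induction on $i$, imposing at each step that $d^i(D^i)\subseteq D^{i+1}$, that $D^{i+1}\cap\mathrm{im}\,d^i=d^i(D^i)$, and that $D^i\cap\ker d^i$ maps onto $H^i(C)$. For the step at $i$, both $H^i(C)$ and $D^{i+1}\cap\mathrm{im}\,d^i$ are finite-dimensional; choose finitely many elements of $\ker d^i$ lifting a generating set of $H^i(C)$ together with finitely many elements of $C^i$ whose images under $d^i$ generate $D^{i+1}\cap\mathrm{im}\,d^i$, and let $D^i$ be the submodule they generate. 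As $C^i$ is locally finite, $D^i$ is finite-dimensional, and it satisfies the three conditions; at the bottom degree one has $\ker d^a=H^a(C)$, already finite-dimensional, so $D^a$ may be enlarged to contain it. A routine verification then shows that the three conditions force $D\hookrightarrow C$ to induce an isomorphism on $H^i$ for every $i$.

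The delicate point is precisely this last induction: the finite-dimensional pieces put into each $D^i$ must be small enough to remain finite-dimensional yet large enough that passing from $C$ to $D$ disturbs neither the images of the differentials nor the kernels modulo those images. This is the only place where the defining property of $B\mlocfin$ — finitely generated submodules are finite-dimensional — is used; everything else is formal manipulation of roofs and homotopies. One could instead bypass the construction by quoting the standard criterion that, for a Serre subcategory $\mathcal A$ of an abelian category $\mathcal B$, the functor $D^b(\mathcal A)\to D^b_{\mathcal A}(\mathcal B)$ is an equivalence as soon as every epimorphism onto an object of $\mathcal A$ from an object of $\mathcal B$ admits a subobject lying in $\mathcal A$ that surjects onto the target — a hypothesis which holds here for the same reason.
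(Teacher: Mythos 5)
Your proof is correct, and its engine is the same as the paper's: a descending induction producing a finite-dimensional subcomplex $D\subseteq C$ with $D\hookrightarrow C$ a quasi-isomorphism, which works precisely because finitely generated submodules of locally finite modules are finite-dimensional. The packaging differs. The paper constructs $D$ only for complexes concentrated in degrees $\le 0$ whose cohomology is a single module of $B\mMod$ in degree $0$ (i.e.\ for resolutions), deduces that $\Hom_{D^b(B\mMod)}(M,N)\to\Hom_{D^b(B\mlocfin)}(M,N)$ is bijective for $M\in B\mMod$, and then concludes by dévissage, using that $B\mMod$ generates both triangulated categories. You prove the subcomplex statement for an arbitrary bounded complex with finite-dimensional cohomology (hence the extra condition that $D^i\cap\ker d^i$ surject onto $H^i(C)$, which the paper's special case does not need) and then read off essential surjectivity, fullness and faithfulness directly from the calculus of fractions, with no generation argument. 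Both routes are sound; yours trades a slightly stronger intermediate lemma for a shorter formal deduction, and your closing observation that the lemma is an instance of the standard criterion for $D^b(\mathcal{A})\to D^b_{\mathcal{A}}(\mathcal{B})$ to be an equivalence (every epimorphism in $\mathcal{B}$ onto an object of $\mathcal{A}$ restricts to an epimorphism from a subobject in $\mathcal{A}$) is also a legitimate shortcut.
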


\begin{proof}
Let $C\in\Comp^b(B\mlocfin)$ with $C^i=0$ for $i>0$, $H^i(C)=0$ for 
$i\not=0$ and $H^0(C)\in B\mMod$. Since $C^0$ is locally finite,
we deduce there is a $B$-submodule $D^0$ of $C^0$ which is finite-dimensional  over $k$ and such that
$D^0+d^{-1}(C^{-1})=C^0$.
We define now $D^{-i}\subset C^{-i}$ by induction on $i$ for
$i\ge 1$. We let $D^{-i}$ be a $B$-submodule of $C^{-i}$ that is 
finite-dimensional over $k$ and such that $d^{-i}(D^{-i})=d^{-i}(C^{-i})
\cap D^{-i+1}$. 
This defines a subcomplex $D$ of $C$ such that $D \hookrightarrow C$ is a quasi-isomorphism.
\sk

We deduce that given $M\in B\mMod$ and $N\in D^b(B\mMod)$, 
the canonical map
$\Hom_{D^b(B\mMod)}(M,N)\longrightarrow \Hom_{D^b(B\mlocfin)}(M,N)$ is an isomorphism.
Since $D^b(B\mMod)$ is generated by $B\mMod$ as a triangulated category, we deduce that the functor of the lemma  is fully faithful. It is then an equivalence, since $B\mMod$ also generates the triangulated category $D_{B\mMod}^b(B\mlocfin)$.\end{proof}


\begin{lem}
\label{le:finiteExt}
The following assertions are equivalent:
\begin{enumerate}
\item[$\mathrm{(1)}$]
the canonical functor $D^b(B\mMod)\longrightarrow D^b_{B\mMod}(B\mMOD)$ is an equivalence
\item[$\mathrm{(2)}$]
the canonical functor $D^b(B\mMod)\longrightarrow D^b_{B\mMod}(B\mMOD)$ is essentially
surjective.
\item[$\mathrm{(3)}$]
given $M,N\in B\mMod$ and given $n\ge 1$,
 the canonical map \linebreak
$\Ext^n_{B\mMod}(M,N)\longrightarrow\Ext^n_{B\mMOD}(M,N)$ is bijective.
\item[$\mathrm{(4)}$]
given $M,N\in B\mMod$ and given $n\ge 2$,
 the canonical map \linebreak
$\Ext^n_{B\mMod}(M,N)\longrightarrow\Ext^n_{B\mMOD}(M,N)$ is surjective.
\end{enumerate}
\end{lem}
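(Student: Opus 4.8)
The plan is to establish the cyclic chain of implications $(1)\Rightarrow(2)\Rightarrow(4)\Rightarrow(3)\Rightarrow(1)$; the implications $(1)\Rightarrow(2)$ (essential surjectivity is part of being an equivalence) and $(3)\Rightarrow(4)$ are immediate from the definitions, and only the former is actually needed. Throughout I regard $D^b_{B\mMod}(B\mMOD)$ as a full triangulated subcategory of $D(B)$, so that for $M,N\in B\mMod$ one has $\Hom_{D^b_{B\mMod}(B\mMOD)}(M,N[n])=\Ext^n_{B\mMOD}(M,N)$ and the comparison maps in $(3)$ and $(4)$ are those induced by the canonical functor $\Phi\colon D^b(B\mMod)\to D^b_{B\mMod}(B\mMOD)$; here I use that Yoneda $\Ext$ agrees with $\Hom$ in the bounded derived category for any abelian category, with Yoneda composition corresponding to composition of morphisms. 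I shall use repeatedly the elementary fact that for $M,N\in B\mMod$ the map $\Ext^1_{B\mMod}(M,N)\to\Ext^1_{B\mMOD}(M,N)$ is \emph{bijective}: an extension of $M$ by $N$, and any morphism between two such, is automatically finite-dimensional over $k$.

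For $(2)\Rightarrow(4)$ I would fix $M,N\in B\mMod$, an integer $n\ge 2$ and a class $\xi\in\Ext^n_{B\mMOD}(M,N)=\Hom_{D(B)}(M,N[n])$, and complete $\xi$ to a distinguished triangle $N[n-1]\to C\to M\xrightarrow{\ \xi\ }N[n]$. A glance at the long exact cohomology sequence gives $H^{1-n}(C)=N$, $H^0(C)=M$ and $H^i(C)=0$ otherwise, so $C\in D^b_{B\mMod}(B\mMOD)$; moreover, up to the canonical identifications of $\tau_{\le 1-n}C$ with $N[n-1]$ and of $\tau_{\ge 2-n}C$ with $M$, this triangle is the truncation triangle of $C$, so $\xi$ is its connecting morphism. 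By $(2)$ there is a bounded complex $D$ of finite-dimensional modules with $C\simeq D$ in $D(B)$. The good truncations $\tau_{\le 1-n}D$ and $\tau_{\ge 2-n}D$ are again bounded complexes of finite-dimensional modules, quasi-isomorphic in $D^b(B\mMod)$ to $N[n-1]$ and to $M$, so the truncation triangle of $D$ produces a class $\tilde\xi\in\Ext^n_{B\mMod}(M,N)$. By functoriality of the good truncation, the isomorphism $C\simeq D$ identifies the two truncation triangles, so $\Phi(\tilde\xi)$ equals $\xi$ up to composition with automorphisms of $M$ and of $N$; since the image of $\Ext^n_{B\mMod}(M,N)$ in $\Ext^n_{B\mMOD}(M,N)$ is a subgroup stable under composition with such (finite-dimensional) automorphisms, $\xi$ itself lies in it.

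For $(4)\Rightarrow(3)$: surjectivity holds in degree $1$ by the elementary fact and in degrees $\ge 2$ by $(4)$, so it remains to prove injectivity in every degree $n\ge 1$, which I would do by induction on $n$, the case $n=1$ being the elementary fact. Let $n\ge 2$ and let $\xi\in\Ext^n_{B\mMod}(M,N)$ lie in the kernel. Represent $\xi$ by a Yoneda extension $0\to N\to X_n\to\cdots\to X_1\xrightarrow{b}M\to 0$ in $B\mMod$, set $K=\ker b\in B\mMod$, and split it into the short exact sequence $0\to K\xrightarrow{a}X_1\xrightarrow{b}M\to 0$ (class $\zeta$) and the extension $0\to N\to X_n\to\cdots\to X_2\to K\to 0$ (class $\eta\in\Ext^{n-1}_{B\mMod}(K,N)$). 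Applying $\Hom_{D^b(B\mMod)}(-,N[n])$ to the triangle $K\xrightarrow{a}X_1\xrightarrow{b}M\xrightarrow{\zeta}K[1]$ gives an exact sequence
$$\Ext^{n-1}_{B\mMod}(X_1,N)\xrightarrow{\ a^*\ }\Ext^{n-1}_{B\mMod}(K,N)\xrightarrow{\ \partial\ }\Ext^{n}_{B\mMod}(M,N)\xrightarrow{\ b^*\ }\Ext^{n}_{B\mMod}(X_1,N)$$
in which $\partial(\eta)=\xi$, and likewise over $B\mMOD$, compatibly with $\Phi$. Thus $\xi=0$ if and only if $\eta\in\operatorname{im}a^*$, while $\Phi(\xi)=0$ forces $\Phi(\eta)=a^*(\theta)$ for some $\theta\in\Ext^{n-1}_{B\mMOD}(X_1,N)$. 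By surjectivity in degree $n-1$ (which holds for all $n-1\ge 1$) write $\theta=\Phi(\theta')$; then $\Phi(\eta-a^*\theta')=0$, so by injectivity in degree $n-1$ one gets $\eta=a^*\theta'\in\operatorname{im}a^*$, hence $\xi=0$.

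Finally, for $(3)\Rightarrow(1)$: by $(3)$ together with the trivial non-positive-degree cases, $\Phi$ induces an isomorphism $\Hom_{D^b(B\mMod)}(M,N[j])\iso\Hom_{D(B)}(M,N[j])$ for all $M,N\in B\mMod$ and all $j\in\mathbb{Z}$. A two-step d\'evissage --- first fix the source in $B\mMod$ and note that the objects on which $\Phi$ is bijective on $\Hom$ form a full triangulated subcategory closed under shifts, hence all of $D^b(B\mMod)$, then repeat with the target fixed --- shows $\Phi$ is fully faithful. Its essential image is then a full triangulated subcategory of $D^b_{B\mMod}(B\mMOD)$ containing $B\mMod$; since every object of $D^b_{B\mMod}(B\mMOD)$ is a finite iterated extension of shifts of its (finite-dimensional) cohomology modules, the essential image is everything, and $\Phi$ is an equivalence. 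I expect the main obstacles to be, first, the bookkeeping in $(2)\Rightarrow(4)$ needed to recognise the connecting morphism of the truncation triangle as $\xi$ (up to automorphisms), and second, setting up the Yoneda decomposition in $(4)\Rightarrow(3)$ correctly; the step $(3)\Rightarrow(1)$ is then purely formal.
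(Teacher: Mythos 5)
Your proposal is correct and follows essentially the same route as the paper: $(3)\Rightarrow(1)$ by generation of $D^b(B\mMod)$ by $B\mMod$, $(4)\Rightarrow(3)$ by induction on the length of a Yoneda extension, and $(2)\Rightarrow(4)$ by realising the $\Ext$-class as the connecting map of the truncation triangle of a (co)cone and transporting it to a representative in $D^b(B\mMod)$. The only differences are cosmetic: your dévissage in $(4)\Rightarrow(3)$ peels off the extension at the $M$-end (pullback along $X_1\to M$) rather than at the $N$-end as in the paper, and you are somewhat more explicit about the ``up to automorphisms of $M$ and $N$'' bookkeeping in $(2)\Rightarrow(4)$.
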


\begin{proof} The implication (3)$\Rightarrow$(1) follows from the fact that
$D^b(B\mMod)$ is generated by $B\mMod$ as a triangulated category.
\sk

The implication (4)$\Rightarrow$(3) is proven as in \cite[Exercice 1(a) p.13]{Ser}
by induction on $n$ (the case $n=1$ holds with no assumption). Let $f\in\Ext^{n+1}_{B\mMod}(M,N)$: it is represented
by a long exact sequence

\centers{$0\longrightarrow N\longrightarrow N_1\longrightarrow\cdots\longrightarrow N_{n+1}\longrightarrow M\longrightarrow 0$}

\noindent of objects of $B\mMod$. We have a commutative diagram
with exact rows

\centers{$\xymatrix{\Ext^n_{B\mMod}(M,N_1)\ar[r]\ar[d]_{\rotatebox{90}{$\sim$}} &
\Ext^n_{B\mMod}(M,N_1/N)\ar[r]\ar[d]_{\rotatebox{90}{$\sim$}} & \Ext^{n+1}_{B\mMod}(M,N) \ar[r]
\ar[d]
& \Ext^{n+1}_{B\mMod}(M,N_1)\ar[d] \\
\Ext^n_{B\mMOD}(M,N_1)\ar[r] &
\Ext^n_{B\mMOD}(M,N_1/N)\ar[r] & \Ext^{n+1}_{B\mMOD}(M,N) \ar[r] &
\Ext^{n+1}_{B\mMOD}(M,N_1) \\
}$}

\noindent The image of $f$ in
$\Ext^{n+1}_{B\mMod}(M,N_1)$ vanishes. We deduce that $f$ is the image of a map
$g\in\Ext^n_{B\mMod}(M,N_1/N)$. If $f\not=0$, then $g\not=0$ and,
by induction, the image of $g$ in $\Ext^n_{B\mMOD}(M,N_1/N)$ is not zero. By chasing on the commutative diagram above, 
we deduce that the image of $f$ in $\Ext^{n+1}_{B\mMOD}(M,N)$ is not zero.

\sk

Let us assume now (2). Let $n\ge 2$, let
$f\in\Hom_{D^b(B\mMOD)}(M,N[n])$ and let
$C$ be the cone of $f$. It is the image of an object $D$ of
$D^b(B\mMod)$ and there is a
distinguished triangle $H^{-n}(D)[n]\longrightarrow D\longrightarrow H^{-1}(D)[1]\rightsquigarrow$. This triangle defines a map $M=H^{-1}(D)\longrightarrow N[n]=H^{-n}(D)[n]$ lifting $f$. This
shows (4). Note finally that (1)$\Rightarrow$(2) is trivial.
\end{proof}
We say that $B$ is \emph{good} if it satisfies
any of the equivalent assertions of Lemma \ref{le:finiteExt}.

\begin{lem}
\label{le:finiteindex} Let $A$ be a subalgebra of a $k$-algebra $B$ making $B$ into a
finitely generated
projective $A$-module. Then $A$ is good if and only if $B$ is good.
\end{lem}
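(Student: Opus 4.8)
The plan is to exploit the fact that, because $B$ is a finitely generated projective $A$-module, both the restriction functor $\Res^B_A$ and the induction functor $\Ind_A^B = B\otimes_A -$ are exact, preserve finite-dimensionality over $k$, preserve arbitrary modules, and satisfy the usual adjunction; moreover, since $A$ is a direct summand of $B$ as an $A$-bimodule (or at least $\Res\Ind$ and $\Ind\Res$ contain the identity as a direct summand via the trace/transfer map, using projectivity), one can "sandwich" any module between its induced and restricted versions. I would phrase everything in terms of criterion (3) of Lemma~\ref{le:finiteExt}, i.e.\ the bijectivity of $\Ext^n_{B\mMod}(M,N)\to\Ext^n_{B\mMOD}(M,N)$, since the $\Ext$ formulation transports most cleanly across the adjunction.

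First I would record the transfer/sandwich fact: for $B$ finitely generated projective over $A$, the identity functor on $A\mMOD$ is a direct summand of $\Res^B_A\circ\Ind_A^B$, and dually (again using projectivity, now on the other side) the identity on $B\mMOD$ is a direct summand of $\Ind_A^B\circ\Res^B_A$; the first is standard, the second follows because $B$ projective over $A$ forces the multiplication map $B\otimes_A B\to B$ to be a split surjection of $B$-bimodules. These hold on all of $\mMOD$ and restrict to $\mMod$. Next, for the "only if" direction assume $B$ is good. Given $M,N\in A\mMod$, functoriality of the relevant $\Ext$ groups under the two adjunctions (plus exactness of $\Res,\Ind$, so that $\Ext$ is computed correctly in both $\mMod$ and $\mMOD$) gives a commutative square relating $\Ext^n_{A}(M,N)$ to $\Ext^n_{B}(\Ind M,\Ind N)$, compatibly over $\mMod\hookrightarrow\mMOD$; goodness of $B$ makes the bottom horizontal arrow (the $B$-level comparison) an isomorphism, and the sandwich fact realizes $\Ext^n_A(M,N)$ as a direct summand of $\Ext^n_B(\Ind M, \Ind N)=\Ext^n_A(\Res\Ind M, N)$ naturally in the pair $(\mMod,\mMOD)$, so the $A$-level comparison map is a split mono that is onto, hence an isomorphism. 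Thus $A$ is good.

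For the "if" direction assume $A$ is good; given $M,N\in B\mMod$ I would use instead the sandwiching of $\mathrm{id}_{B\mMOD}$ inside $\Ind\Res$. One has a natural (in $\mMod$ and $\mMOD$ simultaneously) description of $\Ext^n_B(M,N)$ as a direct summand of $\Ext^n_B(\Ind\Res M, N)\cong\Ext^n_A(\Res M,\Res N)$ — the isomorphism being the $\Ind\dashv\Res$ adjunction together with exactness of the functors — and the latter comparison map $\Ext^n_{A\mMod}(\Res M,\Res N)\to\Ext^n_{A\mMOD}(\Res M,\Res N)$ is an isomorphism by goodness of $A$ since $\Res M,\Res N\in A\mMod$. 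A diagram chase on the square expressing $\Ext^n_B(-,-)$ as a retract of $\Ext^n_A(\Res -,\Res -)$ then forces $\Ext^n_{B\mMod}(M,N)\to\Ext^n_{B\mMOD}(M,N)$ to be an isomorphism for all $n\ge 1$, i.e.\ $B$ is good.

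The main obstacle is the second direction, specifically establishing that the identity on $B\mMOD$ is a direct summand of $\Ind_A^B\Res^B_A$ with splittings that are natural and compatible with the inclusion $B\mMod\hookrightarrow B\mMOD$: one must check that the $B$-bimodule section of $B\otimes_A B\to B$ (coming from $B$ being projective over $A$) yields natural transformations $\mathrm{id}\to\Ind\Res\to\mathrm{id}$ whose composite is the identity, and then verify that applying $\Ext^n_B(-,N)$ turns this into the required retract compatibly on both $\mMod$ and $\mMOD$. The rest — exactness of $\Res$ and $\Ind$, preservation of finite dimensionality, and the adjunction isomorphisms on $\Ext$ — is routine, and the translation between criterion (3) for $A$ and criterion (3) for $B$ is then just a chase through the commuting squares.
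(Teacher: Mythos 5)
There is a genuine gap: both of the ``sandwich facts'' on which your retract argument rests are false under the stated hypotheses. The claim that $\mathrm{Id}_{B\mMOD}$ is a direct summand of $\Ind_A^B\Res_A^B$ because ``$B$ projective over $A$ forces the multiplication map $B\otimes_A B\to B$ to be a split surjection of $B$-bimodules'' is precisely the assertion that $B$ is a \emph{separable} extension of $A$, which projectivity does not give. For instance with $A=k$ and $B=k[x]/(x^2)$ the module $\Ind_A^B\Res_A^B M=B\otimes_k M$ is free, so a natural $B$-linear splitting of $B\otimes_k M\twoheadrightarrow M$ would make every $B$-module projective; equivalently, a $B$-bimodule section of $B\otimes_k B\to B$ is a separability idempotent, and none exists here. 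Dually, $\mathrm{Id}_{A\mMOD}$ being a natural summand of $\Res_A^B\Ind_A^B$ amounts to $A$ being a direct summand of $B$ as an $A$-bimodule, which also fails in general: take $A$ the upper triangular matrices inside $B=M_2(k)$; then $B$ is finitely generated projective over $A$ (it is a sum of copies of the $2$-dimensional indecomposable projective), but by Krull--Schmidt $A$ is not a summand of $B$ even as a one-sided $A$-module. So neither direction of your argument goes through as written, even though the overall shape --- transporting criterion (3) of Lemma \ref{le:finiteExt} across the two adjunctions --- is the right one.

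The paper's proof needs only the much weaker fact that the counit $\Ind_A^B\Res_A^B\longrightarrow\mathrm{Id}$ is \emph{onto}, with no splitting: iterating the surjection $FG(M)\twoheadrightarrow M$ produces a (possibly unbounded) resolution of any $M\in B\mMod$ by modules induced from $A\mMod$, so such modules generate, and the adjunction isomorphism $\Ext^i_B(\Ind_A^B V,M)\simeq\Ext^i_A(V,\Res_A^B M)$, which holds compatibly in $\mMod$ and $\mMOD$ because the functors are exact, yields goodness of $B$ from goodness of $A$ by d\'evissage along that resolution. The converse is handled symmetrically with the adjoint pair $(\Res_A^B,\mathrm{CoInd}_A^B)$. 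To keep your retract strategy you would need a separability-type hypothesis; to prove the lemma as stated, replace the splittings by this resolution-and-generation argument.
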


\begin{proof} Under the assumption of the lemma, the pair $(F,G) = (\Ind_A^B,\Res_A^B)$ is an adjoint pair of functors which are exact and preserve finite-dimensionality. Furthermore, the canonical map $FG\longrightarrow\mathrm{Id}$ is onto.
\sk

Let $M\in B\mMod$. There is a surjective map $f:FG(M)\longrightarrow M$. The kernel
of $f$ is a quotient of $FG(\ker f)$. Iterating this construction,
we obtain a complex of $B$-modules $C=\cdots\longrightarrow C^{-1}\longrightarrow C^0\longrightarrow 0$ with a morphism
$C\longrightarrow M$  that is a quasi-isomorphism and such that
$C^i$ is in $F(A\mMod)$. From Lemma \ref{le:fininlocfin} we deduce that $F(A\mMod)$ generates
$D(B\mlocfin)$ as a triangulated category closed under direct sums.

\sk

Given $i\ge 0$ and $V\in A\mMod$, we have a commutative square

\centers{$\xymatrix{
\Ext^i_{B\mMod}(F(V),M)\ar[r]^\sim \ar[d] &\Ext^i_{A\mMod}(V,G(M))\ar[d] \\
\Ext^i_{B\mMOD}(F(V),M)\ar[r]^{\sim} & \Ext^i_{A\mMOD}(V,G(M))
}$}

\noindent and we deduce that $B$ is good whenever $A$ is.  The other implication is proven in the same way, by exchanging the role of $A$ and $B$ and 
 by taking $(F,G) = (\Res_A^B,\mathrm{CoInd}_A^B)$. \end{proof}

\begin{lem}\label{le:product}
Assume $k$ is perfect and let $A,B$ be $k$-algebras. If $A$ and $B$ are good, then $A\otimes_k B$ is
good.
\end{lem}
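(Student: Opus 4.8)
The plan is to use characterisation (3) of goodness from Lemma \ref{le:finiteExt}, so it suffices to show that for $M, N \in (A\otimes_k B)\mMod$ and $n \ge 1$, the map $\Ext^n_{(A\otimes_k B)\mMod}(M,N) \to \Ext^n_{(A\otimes_k B)\mMOD}(M,N)$ is bijective. The key point is a Künneth-type decomposition of $\Ext$ over a tensor product of algebras. Since $k$ is perfect, for finite-dimensional $A$-modules $U, U'$ and finite-dimensional $B$-modules $V, V'$ one has a natural isomorphism $\Ext^*_{(A\otimes_k B)}(U\otimes_k V, U'\otimes_k V') \cong \Ext^*_A(U,U') \otimes_k \Ext^*_B(V,V')$, computed by tensoring a projective resolution of $U$ over $A$ with one of $V$ over $B$; perfectness of $k$ ensures that $U\otimes_k V$ remains a module over $A\otimes_k B$ with the expected homological behaviour and that no Tor-corrections intervene since we work over a field. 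First I would set up this isomorphism both in $\mMod$ and in $\mMOD$ (for the latter, replacing projective resolutions over $A$ and $B$ by arbitrary ones, noting $P \otimes_k Q$ is projective over $A\otimes_k B$ when $P, Q$ are projective).

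Next I would reduce the general case of arbitrary finite-dimensional $M, N$ to the case of external tensor products. Every finite-dimensional $(A\otimes_k B)$-module admits a finite filtration, or at least can be resolved, in terms of modules of the form $U \otimes_k V$: indeed, restricting along $A \hookrightarrow A\otimes_k B$ and $B \hookrightarrow A\otimes_k B$, one may build a resolution of $M$ by summands of modules $\Ind(\text{restriction})$; more simply, using the same device as in the proof of Lemma \ref{le:finiteindex}, the adjunction $(F,G)$ with $F$ extension of scalars along $A \to A\otimes_k B$ lets one cover $M$ by a module induced from $A$, and symmetrically for $B$, so that $M$ has a left resolution by modules of the form $(A\otimes_k B)\otimes_A U = U \otimes_k B$, and each such splits further. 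By a dimension-shifting / spectral-sequence argument — or by the fact that $(A\otimes_k B)\mMod$ is generated as a triangulated category by the external tensor products $U \otimes_k V$ — the bijectivity of the comparison map for external tensor products propagates to all finite-dimensional modules.

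The main obstacle is the passage from $\mMod$ to $\mMOD$ in the Künneth isomorphism: one must check that, for $U \otimes_k V$ with $U, V$ finite-dimensional, computing $\Ext_{(A\otimes_k B)\mMOD}$ still yields the tensor product $\Ext^*_{A\mMOD}(U,-) \otimes_k \Ext^*_{B\mMOD}(V,-)$, and then invoke the hypothesis that $A$ and $B$ are good to identify these with their finite-dimensional counterparts. Once the Künneth formula is available in both settings and is compatible with the comparison maps, the statement follows: the comparison map for $\Ext^n_{(A\otimes_k B)}(U\otimes_k V, U'\otimes_k V')$ becomes $\bigoplus_{i+j=n} (\text{comparison for } A) \otimes_k (\text{comparison for }B)$, a direct sum of isomorphisms. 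A clean way to organise this is to observe that $\Res_A$ sends perfect complexes to perfect complexes (as $A\otimes_k B$ is free over $A$) and to combine Lemma \ref{le:fininlocfin} with the generation statement, thereby reducing the whole verification to Hom-computations between the generators $U \otimes_k V$, where the Künneth formula does all the work.
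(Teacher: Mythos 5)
Your proposal follows essentially the same route as the paper: the K\"unneth formula for $\Ext$ of external tensor products, combined with the fact that $D^b((A\otimes_k B)\mMod)$ is generated as a triangulated category by the modules $U\otimes_k V$ with $U\in A\mMod$, $V\in B\mMod$. One caution: your suggested justification of that generation statement by resolving $M$ via modules induced from $A$ does not work as stated, since $(A\otimes_k B)\otimes_A U = U\otimes_k B$ need not be finite-dimensional when $B$ is infinite-dimensional. The correct (and the paper's) justification is that every finite-dimensional $(A\otimes_k B)$-module factors through $A'\otimes_k B'$ for finite-dimensional quotients $A'$, $B'$, and its simple subquotients are then accounted for by external tensor products of simple modules --- this is precisely where perfectness of $k$ is used (so that $\bar A'\otimes_k\bar B'$ is semisimple), rather than in the K\"unneth formula itself, where being over a field already kills the Tor terms.
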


\begin{proof}
The lemma follows from the K\"unneth Formula and the fact that 
$D^b((A\otimes_k B)\mMod)$ is generated by $A\mMod\otimes B\mMod$ as
a triangulated category, as finite dimensional simple $(A \otimes_k B)$-modules are of the form $V\otimes_k W$, where $V$ (resp. $W$) is a finite dimensional $A$-module (resp. $B$-module).
\end{proof}

\sk

\noindent \textbf{\thesubsection.2 Relative homotopy categories.\label{sect:homotopycat}}  Let $A$ be a subalgebra of a $k$-algebra $B$. We denote by
$\Ho(B,A)$ the quotient of the triangulated category $\Ho(B)$ by the thick subcategory of complexes
$C$ such that $\Res_A \, C =0$ in $\Ho(A)$. We have quotient functors
$\Ho(B)\longrightarrow \Ho(B,A) \longrightarrow D(B)$. Taking for example $A=B$ or $k$ gives
 $\Ho(B,B) = \Ho(B)$ and $\Ho(B,k) = D(B)$.

\sk

Recall that a complex $C$ of $A$-modules
is \emph{homotopically projective} if \linebreak $\Hom_{\Ho(A)}(C,D)=0$ given $D$ any acyclic complex of $A$-modules. The following lemma is classical when $A = k$ \cite[Theorem 8.1.1]{Ke}.

\begin{lem}
\label{le:homotopicallyprojective} Assume $B$ is a projective $A$-module. 
Let $\mathcal{T}$ be the full subcategory of $\Ho(B,A)$ of complexes $C$ such that
$\Res_A \, C$ is homotopically projective. 
The quotient functor induces an equivalence $\mathcal{T} \iso D(B)$, whose
inverse is a left adjoint to the quotient functor $\Ho(B,A) \longrightarrow D(B)$.
\end{lem}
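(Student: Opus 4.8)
The plan is to follow the classical argument for $A=k$ (the Bökstedt–Neeman / Keller construction of homotopically projective resolutions) and to observe that it goes through relatively over $A$, using only the hypothesis that $B$ is projective as an $A$-module so that restriction sends homotopically projective complexes of $B$-modules to homotopically projective complexes of $A$-modules.

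First I would record the key general nonsense: $\Ho(B)$ is a triangulated category with arbitrary direct sums, compactly generated (by $B$ itself), and the quotient functor $q\colon \Ho(B)\to D(B)$ commutes with direct sums. I claim that the kernel of $q$ — the acyclic complexes of $B$-modules — is contained in the subcategory $\mathcal N$ of complexes $C$ with $\Res_A\,C = 0$ in $\Ho(A)$ only after passing to $\Ho(B,A)$; more precisely, $D(B)$ is the Verdier quotient of $\Ho(B,A)$ by the image of the acyclic complexes. So it suffices to produce, for every $C\in\Ho(B)$, a complex $P\in\Ho(B)$ with $\Res_A\,P$ homotopically projective together with a quasi-isomorphism $P\to C$; then $\mathcal T$ (the essential image of such $P$ in $\Ho(B,A)$) maps isomorphically onto $D(B)$ under $q$, because a quasi-isomorphism between two objects of $\mathcal T$ is an isomorphism in $\Ho(B,A)$ — this is where $B$ projective over $A$ enters, via the standard fact that $\Hom_{\Ho(A)}(\Res_A P, X)=0$ for $X$ acyclic forces $\Hom_{\Ho(B)}(P,Y)$ to vanish for $Y$ acyclic as well, after one checks the relevant adjunction/restriction compatibility.

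The existence of $P$ is the technical heart. Here I would imitate \cite[Theorem 8.1.1]{Ke}: write $C$ as a homotopy colimit of its stupid truncations $\sigma_{\ge -n}C$, resolve each bounded-below piece by a right-bounded complex of projective $B$-modules (which restricts to a right-bounded complex of projective $A$-modules since $B$ is $A$-projective, hence is homotopically projective over $A$), assemble these into a single complex $P$ as a homotopy colimit, and note that a countable homotopy colimit of complexes whose restrictions are homotopically projective again has homotopically projective restriction, because the class of homotopically projective complexes of $A$-modules is closed under direct sums and mapping cones. The quasi-isomorphism $P\to C$ is built compatibly with the truncation tower. I would then remark that $P\mapsto$ its class gives a functor $D(B)\to\mathcal T\subset\Ho(B,A)$, and that this functor is left adjoint to $q\colon \Ho(B,A)\to D(B)$: indeed for $P\in\mathcal T$ and $D\in\Ho(B,A)$ arbitrary, the restriction $\Res_A P$ being homotopically projective gives $\Hom_{\Ho(B,A)}(P,D)\iso\Hom_{D(B)}(qP,qD)$ by the usual calculus of fractions, since any quasi-isomorphism $D'\to D$ admits a section up to homotopy after restriction to $A$, hence is invertible in $\Ho(B,A)$ against $P$.

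The main obstacle I anticipate is not the construction of $P$ — that is routine once one has the truncation-tower trick — but rather the bookkeeping that makes the quotient $\Ho(B,A)\to D(B)$ behave like an ordinary localization: one must check that inverting quasi-isomorphisms in $\Ho(B,A)$ is a genuine Verdier localization (that the quasi-isomorphisms that have become invertible in $\Ho(B,A)$, together with those that were already killed, form a thick subcategory), and that the adjunction is compatible with the two quotient functors $\Ho(B)\to\Ho(B,A)\to D(B)$. The only place the hypothesis ``$B$ projective over $A$'' is genuinely used is to guarantee $\Res_A$ preserves homotopical projectivity and preserves (termwise) projective resolutions; everything else is formal. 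So I would structure the write-up as: (i) $\Res_A$ preserves homotopically projective complexes; (ii) construction of relative homotopically projective resolutions via homotopy colimits of truncations; (iii) these resolutions are unique up to unique isomorphism in $\Ho(B,A)$, giving the functor $D(B)\to\mathcal T$; (iv) $q|_{\mathcal T}$ and this functor are mutually inverse, and the functor is left adjoint to $q$ on all of $\Ho(B,A)$.
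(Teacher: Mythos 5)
Your overall architecture matches the paper's: resolve each complex by a homotopically projective complex of $B$-modules, show the resolution map becomes invertible in $\Ho(B,A)$, and deduce the equivalence and the adjunction from Hom-isomorphisms. (The paper simply cites the existence of homotopically projective resolutions rather than redoing the truncation/homotopy-colimit construction, but that difference is immaterial.) However, two of the specific justifications you give for the key steps are false, and they happen to be exactly the places where the hypothesis ``$B$ projective over $A$'' must do its work.

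First, you assert that $\Hom_{\Ho(A)}(\Res_A P,X)=0$ for all acyclic $X$ \emph{forces} $\Hom_{\Ho(B)}(P,Y)=0$ for all acyclic $Y$, i.e.\ that homotopical projectivity of $\Res_A P$ implies homotopical projectivity of $P$ over $B$. This implication goes the wrong way and is false: take $A=k$, $B=kG$ with $G$ a finite group whose order is divisible by $\mathrm{char}\,k$, and $P=k$ concentrated in degree $0$. Every complex of $k$-vector spaces is homotopically projective, but $\Hom_{\Ho(kG)}(k,Y)$ is nonzero for a complete resolution $Y$ (it computes Tate cohomology), so $P$ is not homotopically projective over $B$. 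The implication that actually holds, and that the paper establishes, is the converse: if $P$ is homotopically projective over $B$ then $\Res_A P$ is homotopically projective over $A$, because for $D$ acyclic over $A$ the complex $\mathrm{CoInd}_A^B D=\Hom_A(B,D)$ is acyclic (here projectivity of $B$ over $A$ enters) and $\Hom_{\Ho(A)}(\Res_A P,D)\simeq\Hom_{\Ho(B)}(P,\mathrm{CoInd}_A^B D)=0$. The statement you actually need --- that a quasi-isomorphism $f$ between two objects of $\mathcal{T}$ is invertible in $\Ho(B,A)$ --- is proved not by upgrading objects of $\mathcal{T}$ to homotopically projective complexes over $B$, but by restricting: $\Res_A f$ is a quasi-isomorphism between homotopically projective complexes of $A$-modules, hence a homotopy equivalence, and a morphism of $\Ho(B,A)$ is invertible if and only if its restriction is invertible in $\Ho(A)$ (its cone restricts to a contractible complex). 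Second, your claim that ``any quasi-isomorphism $D'\to D$ admits a section up to homotopy after restriction to $A$'' is likewise false for arbitrary $D,D'$; the correct route to $\Hom_{\Ho(B,A)}(P,D)\iso\Hom_{D(B)}(P,D)$ is to use that your constructed $P$ is genuinely homotopically projective over $B$ (it is built from projective $B$-modules), so that $\Hom_{\Ho(B)}(P,N)=0$ for every $N$ in the kernel of $\Ho(B)\to\Ho(B,A)$ (such $N$ is acyclic) and $\Hom_{\Ho(B)}(P,D)\iso\Hom_{D(B)}(P,D)$ classically; both quotient maps on Hom-spaces are then bijections. With these two repairs your proof closes up and coincides with the paper's.
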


\begin{proof}
Let $C$ be a homotopically projective complex of $B$-modules and $D$
be an acyclic complex of $A$-modules. Since $B$ is projective as an $A$-module, the complex $\mathrm{CoInd}_A^B \, D = \mathrm{Hom}_{A}(B,D)$ is acyclic. Consequently we
have  $\Hom_{\Ho(A)}(\Res_A^B C,D)\simeq\Hom_{\Ho(B)}(C,\mathrm{CoInd}_A^B D)=0$. It follows that $\Res_A^B C$ is homotopically projective.

\sk

Let now $C\in\mathcal{T}$. Consider a homotopically projective resolution of $C$, \ie,
a morphism of complexes $f:C'\longrightarrow C$ where $C'$ is a homotopically
projective complex and $f$ is a quasi-isomorphism.
Since $\Res_A \, C'$ and $\Res_A \, C$  are homotopically projective,
we deduce that $\Res_A\, f$ is an isomorphism in $\Ho(A)$. 
Note that an arrow $g$ of $\Ho(B,A)$ is invertible
if and only if $\Res_A \, g$ is invertible in $\Ho(A)$.
It follows that $f$ is an isomorphism in $\Ho(B,A)$.
Now, given $D\in\Comp(B)$, we have canonical isomorphisms

\centers{$\Hom_{\Ho(B)}(C',D)\iso\Hom_{\Ho(B,A)}(C',D)\iso\Hom_{D(B)}(C',D)$}

\noindent and the lemma follows.
\end{proof}

We denote by $R_D:D(B)\longrightarrow D(A)$ and $R_{\Ho}:\Ho(B,A)\longrightarrow \Ho(A)$ the
triangulated functors induced by the restriction $\Res_A^B$. 
Given $E:\mathcal{C} \longrightarrow \mathcal{C}'$ a functor and 
$\mathcal{I}\subset \mathcal{C}'$, we denote by
$E^{-1}(\mathcal{I})$ the full subcategory of $\mathcal{C}$ of objects $C$ such that
$E(C)$ is isomorphic to an object of $\mathcal{I}$.

\begin{lem}
\label{le:goodrepresentative}
Let $A$ be a finite-dimensional
subalgebra of a $k$-algebra $B$. Assume that there exists a subalgebra $B'$ of $B$
such that $B$ is a finitely generated projective $(A,B')$-bimodule. Then the quotient functor induces an equivalence \linebreak
$R_{\Ho}^{-1}(\Comp^b(A\mPROJ)) \iso R_D^{-1}(\Comp^b(A\mPROJ))$.

Furthermore, if $B$ is good, this restricts to
an equivalence  \linebreak
$R_{\Ho}^{-1}(\Comp^b(A\mproj))$ $\iso R_D^{-1}(\Comp^b(A\mproj))$.
\end{lem}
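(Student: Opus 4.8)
The goal is to show the quotient functor $\Ho(B,A) \to D(B)$ restricts to an equivalence on the full subcategories of objects whose $A$-restriction lies in $\Comp^b(A\mPROJ)$ (resp. $\Comp^b(A\mproj)$ when $B$ is good). The natural strategy is to use Lemma \ref{le:homotopicallyprojective}: since $B$ is projective as an $A$-module (it is a finitely generated projective $(A,B')$-bimodule, hence in particular projective as a left $A$-module), the quotient functor $q\colon \Ho(B,A)\to D(B)$ has a fully faithful left adjoint $s$ with essential image the subcategory $\mathcal T$ of complexes whose $A$-restriction is homotopically projective. So it suffices to check that $s$ carries $R_D^{-1}(\Comp^b(A\mPROJ))$ into $R_{\Ho}^{-1}(\Comp^b(A\mPROJ))$, and conversely that $q$ carries $R_{\Ho}^{-1}(\Comp^b(A\mPROJ))$ into $R_D^{-1}(\Comp^b(A\mPROJ))$ — the latter is essentially tautological since $q$ is the identity on objects at the level of restriction (restriction commutes with both quotient functors, by definition of $\Ho(B,A)$ and the construction $R_{\Ho}$, $R_D$). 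For the fully-faithfulness on the subcategories, one restricts the equivalence $\mathcal T\iso D(B)$.

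**Key steps.** First I would record that $q\circ s \simeq \mathrm{Id}$ and $s\circ q$ is isomorphic to the identity on $\mathcal T$ (Lemma \ref{le:homotopicallyprojective}), and that $R_D \circ q = R_{\Ho}$ as functors to $\Ho(A)$ (both are induced by $\Res_A^B$; this is where one uses that a morphism of $\Ho(B,A)$ is invertible iff its $A$-restriction is). So an object $C\in\Ho(B,A)$ lies in $R_{\Ho}^{-1}(\Comp^b(A\mPROJ))$ iff $q(C)$ lies in $R_D^{-1}(\Comp^b(A\mPROJ))$; this already gives that $q$ sends the one subcategory into the other and reflects membership. The content is the other direction: given $C\in\mathcal T$ with $R_D(C) = \Res_A C$ quasi-isomorphic in $D(A)$ to a bounded complex of projectives, I must show $\Res_A C$ is already, \emph{in $\Ho(A)$}, isomorphic to a bounded complex of projective $A$-modules. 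Here $\Res_A C$ is homotopically projective by the first half of Lemma \ref{le:homotopicallyprojective}; a homotopically projective complex that is quasi-isomorphic to $P\in\Comp^b(A\mPROJ)$ is homotopy equivalent to $P$, because $P$ is itself homotopically projective (bounded above complex of projectives — using $A$ finite-dimensional, or just that projectives have no higher self-extensions appropriately) so the quasi-isomorphism between two homotopically projective complexes is a homotopy equivalence. Thus $s$ maps $R_D^{-1}(\Comp^b(A\mPROJ))$ into $R_{\Ho}^{-1}(\Comp^b(A\mPROJ))$, and the restriction of the adjoint equivalence $(\mathcal T \iso D(B))$ gives the first claimed equivalence.

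**The finitely generated / good case.** For the second assertion one runs the same argument but must produce a bounded complex of \emph{finitely generated} projective $A$-modules homotopy equivalent to $\Res_A C$. The hypothesis that $B$ is good enters exactly here: goodness of $B$ means $D^b(B\mMod)\iso D^b_{B\mMod}(B\mMOD)$, so a complex of $B$-modules with finite-dimensional total cohomology is quasi-isomorphic to one with finite-dimensional components; combined with $A$ finite-dimensional and $B$ finitely generated as an $A$-module, one controls the size of a minimal projective resolution over $A$. Concretely: if $\Res_A C$ is perfect over $A$ (isomorphic in $D(A)$ to a bounded complex of f.g. projectives) then its cohomology is finite-dimensional, hence $C$ itself — now viewed over $B$, using that $B$ is finite over $A$ so that $H^*(C)$ is finite-dimensional over $k$ — has finite-dimensional cohomology, so by goodness $C$ is quasi-isomorphic to a bounded complex of finite-dimensional $B$-modules, whose $A$-restriction is a bounded complex of finite-dimensional $A$-modules; replacing it by a minimal projective resolution truncated appropriately, and using that over a finite-dimensional algebra a perfect complex is homotopy equivalent to a bounded complex of f.g. projectives, yields the required representative. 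Then again restrict the equivalence.

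**Main obstacle.** The genuinely delicate point is the passage from "quasi-isomorphic in $D(A)$ to a bounded complex of projectives" to "homotopy equivalent to such a complex" — i.e. verifying that the relevant bounded (above) complexes of projectives are homotopically projective so that quasi-isomorphisms between them and homotopically projective complexes are homotopy equivalences, and, in the finitely generated case, that one can additionally arrange the components to be finitely generated. This is where $A$ finite-dimensional, $B$ finite over $A$, and the goodness of $B$ all have to be combined carefully; the adjunction formalism of Lemma \ref{le:homotopicallyprojective} is the easy scaffolding around it.
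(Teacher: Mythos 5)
Your argument for the first equivalence is a legitimate alternative to the paper's and is essentially correct: the paper instead builds an explicit relative bar resolution $X=\cdots\to B\otimes_{B'}B\to 0$ of $B$ as a $(B,B)$-bimodule, forms $D=X\otimes_B C$ and truncates, thereby producing an honest bounded complex of $B$-modules whose terms are projective over $A$; your route via the adjoint of Lemma \ref{le:homotopicallyprojective} and the observation that an isomorphism in $D(A)$ between homotopically projective complexes (a bounded complex of projectives being one) lifts to a homotopy equivalence gives the statement as literally formulated, at the cost of not producing such an explicit termwise-projective representative. Note that your version only uses that $B$ is $A$-projective.

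The second part is where there is a genuine gap, and it is visible from the fact that you never use the distinctive hypothesis of the lemma, namely the existence of $B'$ with $B$ finitely generated projective as an $(A,B')$-bimodule. Your assertion that $B$ is finitely generated as an $A$-module is false in general and in the intended application ($B=k\Upsilon$ with $\Upsilon$ infinite, $A=kH$ with $H$ finite): $B$ is then $A$-free of infinite rank. Consequently your "minimal projective resolution truncated appropriately" has nowhere to live: a resolution by finitely generated projective $B$-modules restricts to infinitely generated projective $A$-modules, while a resolution over $A$ is not a complex of $B$-modules, so neither produces what is needed, namely a bounded complex of $B$-modules quasi-isomorphic to $C$ whose terms restrict to objects of $A\mproj$. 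This is exactly the role of the paper's complex $X$: its terms are finitely generated projective as $(A,B)$-bimodules, so once goodness of $B$ has been used to replace $C$ by a bounded complex with finite-dimensional terms, each $X^{-i}\otimes_B C^j$ is a finitely generated projective $A$-module, and $\tau_{\ge n}(X\otimes_B C)$ is the required representative. (You do correctly locate where goodness enters — passing from finite-dimensional total cohomology to finite-dimensional terms — and if one only wants the statement in its literal "isomorphic in $\Ho(A)$ to an object of $\Comp^b(A\mproj)$" form, your first-part argument already suffices since $A$ is finite-dimensional; but the construction the paper actually needs downstream, e.g.\ for Proposition \ref{prop:finiteness}, is the termwise one, and your proposal has no mechanism for it.)
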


\begin{proof}
The fully faithfulness is given by Lemma \ref{le:homotopicallyprojective}.  
\sk

We construct by induction a complex of $(B,B)$-bimodules
$X=\cdots\longrightarrow X^{-1}\longrightarrow X^0\longrightarrow 0$.
We put $X^0=B\otimes_{B'}B$. Let $M$ be the kernel of the multiplication map
$X^0\longrightarrow B$. We put $X^{-1}=X^0\otimes_B M$ and $d^{-1}:X^{-1}\longrightarrow X^0$ is the
composition $X^0\otimes_B M\xrightarrow{\mathrm{mult}}M\xrightarrow{\mathrm{can}}X^0$.
Suppose $X^{-i}\longrightarrow\cdots\longrightarrow X^0\longrightarrow 0$ has been defined for some $i\ge 1$.
We put $X^{-i-1}=X^0\otimes_B\ker d^{-i}$ and
$d^{-i-1}:X^{-i-1}\xrightarrow{\mathrm{mult}}\ker d^{-i}\xrightarrow{\mathrm{can}}X^{-i}$.
The multiplication map $X\longrightarrow B$ is a quasi-isomorphism.
Note that $X^0$ is a $(B,B)$-bimodule that
is finitely generated and projective as an $(A,B)$-bimodule and as a 
$B$-module.
By induction, we deduce that $X^{-i}$
is finitely generated and projective as an $(A,B)$-bimodule and as a $B$-module, and
$\ker d^{-i}$ is a direct summand of $X^{-i}$ as a left and as a right $B$-module.
\sk

Let $C$ be an object of $R_D^{-1}(\Comp^b(A\mPROJ))$. It is a bounded complex of $B$-modules such that $\Res_A \, C$ is 
quasi-isomorphic to a bounded complex of projective modules $C'$. Let  $n\in\mathbb{Z}$ be such that the terms of $C'$ are zero in
degrees $<n$.  Let us consider the complex
$D=X\otimes_B C$. The canonical map $D \longrightarrow C$ is a quasi-isomorphism and $\Res_A \, D$ is a right bounded complex of projective modules that is quasi-isomorphic to $C'$. Consequently, $\Res_A \, D$ is homotopy equivalent to $C'$, and so is $\Res_A (\tau_{\ge n} D)$  since $\tau_{\ge n} C' =C'$. We deduce that $\tau_{\ge n} D$ is a bounded complex of $B$-modules whose restriction to $A$ are
projective. This shows the first part of the lemma. Note that if the terms of
$C$ are finite-dimensional, then the terms of
$\tau_{\ge n}D$ are finite-dimensional as well.

\smallskip
We consider finally
a bounded complex $M$ of $B$-modules whose restriction to $A$ is perfect.
Since $A$
is finite-dimensional, we deduce that $M$ has finite-dimensional total cohomology, hence it
is quasi-isomorphic to an object $C$ of $D^b(B\mMod)$, as $B$ is good. The construction
above gives a quasi-isomorphic bounded complex $\tau_{\ge n}D$ of $B$-modules whose  restriction to $A$ are
finitely generated and projective.
\end{proof}

If $B$ is a projective $A$-module we have the following picture:

\centers{$\xymatrix{
\Ho(B,A) \ar@{->>}[r] & D(B) \\
R_{\mathrm{Ho}}^{-1}(A\operatorname{\!-hoProj}) \ar[r]^\sim \ar@{^{(}->}[u] &
R_{D}^{-1}(A\operatorname{\!-hoProj})  \ar[u]^{\rotatebox{90}{$\sim$}} \\
R_{\mathrm{Ho}}^{-1}(\Comp^b(A\mPROJ)) \ \ar@{^{(}->}[r] \ar@{^{(}->}[u] &
R_{D}^{-1}(\Comp^b(A\mPROJ)) \ar@{^{(}->}[u] \\
R_{\mathrm{Ho}}^{-1}(\Comp^b(A\mproj)) \ \ar@{^{(}->}[r] \ar@{^{(}->}[u] &
R_{D}^{-1}(\Comp^b(A\mproj)) \ar@{^{(}->}[u] 
}$}

\sk

\begin{rmk} All results in \S \hyperref[sect:locfinite]{\ref{sect:locfinite}.1}-\hyperref[sect:homotopycat]{\ref{sect:homotopycat}.2} except Lemma \ref{le:product} generalise immediately to the case where the field $k$ is replaced by any commutative noetherian ring.
\end{rmk}

\sk

\noindent \textbf{\thesubsection.3 Good groups.} \label{sect:goodgroups} We relate in this section the property for a group to be good as defined in
\cite[\S 2.6, exercise 2]{Ser}, to the property
of its group algebra to be good. We refer to \cite[\S 3]{GrJZZa} for a discussion
of goodness of groups.

\sk

Let $\Upsilon$ be a group and
$\hat{\Upsilon}$ its profinite completion.
We consider only continuous representations of $\hat{\Upsilon}$, \ie, 
representations such that the orbit of any vector is finite. In particular,
we have a fully faithful embedding $k\hat{\Upsilon}\mMOD\longrightarrow k\Upsilon\mlocfin$, and this
embedding is an equivalence if $k$ is a finite field. As a consequence, we have the following result:
 
\begin{lem}\label{lem:braid}
Assume $k$ is a finite field.
The algebra $k\Upsilon$ is good if and only if given 
$M$ a finite-dimensional $k\Upsilon$-module and given $n\ge 0$,
 the canonical map
$\mathrm{H}^n(\hat{\Upsilon},M)\longrightarrow \mathrm{H}^n(\Upsilon,M)$ is bijective. 

\end{lem}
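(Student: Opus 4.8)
The plan is to translate goodness of $k\Upsilon$ via criterion (3) of Lemma \ref{le:finiteExt} into a statement about the comparison map on $\Ext$-groups, and then to identify these $\Ext$-groups with group cohomology. Since $k$ is a finite field, the fully faithful embedding $k\hat\Upsilon\mMOD\longrightarrow k\Upsilon\mlocfin$ discussed just above is an equivalence, so a finite-dimensional $k\Upsilon$-module $M$ is the same thing as a finite (discrete) $k\hat\Upsilon$-module, and similarly for $N$. The first step is therefore the standard identifications $\Ext^n_{k\Upsilon\mMod}(M,N)=\Ext^n_{k\Upsilon\mMOD}(M,N)$ — wait, this is exactly what goodness concerns — so more precisely: $\Ext^n_{k\Upsilon\mMOD}(k,M)=\mathrm{H}^n(\Upsilon,M)$ for any $k\Upsilon$-module $M$, and $\Ext^n_{k\hat\Upsilon\mMOD}(k,M)=\mathrm{H}^n(\hat\Upsilon,M)$ for $M$ a discrete $\hat\Upsilon$-module, where the latter is continuous (profinite) cohomology. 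The comparison map $\mathrm{H}^n(\hat\Upsilon,M)\to\mathrm{H}^n(\Upsilon,M)$ is induced by the canonical map $\Upsilon\to\hat\Upsilon$.

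The second step is to match this up with the locally finite category. Because $k\hat\Upsilon\mMOD\simeq k\Upsilon\mlocfin$ as abelian categories, we get $\Ext^n_{k\Upsilon\mlocfin}(k,M)\cong\mathrm{H}^n(\hat\Upsilon,M)$ for $M\in k\Upsilon\mMod$. On the other hand, $\Ext^n_{k\Upsilon\mMod}(k,M)$ computed in the category of finite-dimensional modules agrees with $\Ext^n_{k\Upsilon\mlocfin}(k,M)$: indeed Lemma \ref{lem:fgtoloc} gives an equivalence $D^b(k\Upsilon\mMod)\iso D^b_{k\Upsilon\mMod}(k\Upsilon\mlocfin)$, and $\Ext$-groups between objects of $k\Upsilon\mMod$ are computed as $\Hom$ in these derived categories. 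Hence $\Ext^n_{k\Upsilon\mMod}(k,M)\cong\mathrm{H}^n(\hat\Upsilon,M)$, naturally, and one checks this isomorphism is compatible with the comparison maps so that the square
\centers{$\xymatrix{\Ext^n_{k\Upsilon\mMod}(k,M)\ar[r]\ar[d]_{\rotatebox{90}{$\sim$}} & \Ext^n_{k\Upsilon\mMOD}(k,M)\ar[d]^{\rotatebox{90}{$\sim$}} \\ \mathrm{H}^n(\hat\Upsilon,M)\ar[r] & \mathrm{H}^n(\Upsilon,M)}$}
commutes, the bottom map being the canonical one.

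The third step is the dévissage from $(M,N)$ with $M$ arbitrary finite-dimensional to the case $M=k$, i.e. from criterion (3) of Lemma \ref{le:finiteExt} (or equivalently (4)) to the bijectivity of $\mathrm{H}^n(\hat\Upsilon,N)\to\mathrm{H}^n(\Upsilon,N)$ for all finite-dimensional $N$ and all $n$. One direction is immediate: if $k\Upsilon$ is good, apply (3) with $M=k$. For the converse, one uses that $\Ext^n_{k\Upsilon\mMOD}(M,N)\cong\mathrm{H}^n(\Upsilon,\Hom_k(M,N))$ and likewise in $k\Upsilon\mlocfin$ (with $\Hom_k(M,N)$ again finite-dimensional, hence locally finite also as a $\hat\Upsilon$-module), so the hypothesis on all $N$ gives bijectivity of $\Ext^n_{k\Upsilon\mMod}(M,N)\to\Ext^n_{k\Upsilon\mlocfin}(M,N)\to\Ext^n_{k\Upsilon\mMOD}(M,N)$ for all finite-dimensional $M,N$, which is criterion (3). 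The main technical point — the step I expect to require the most care — is verifying that the identification of $\Ext^n_{k\Upsilon\mMod}(M,N)$ with continuous cohomology of $\hat\Upsilon$ is natural enough to make the comparison square commute, i.e. that the inflation map along $\Upsilon\to\hat\Upsilon$ is exactly what the equivalence $k\Upsilon\mlocfin\simeq k\hat\Upsilon\mMOD$ followed by the forgetful/inclusion functors produces; once that compatibility is in place, everything else is a formal consequence of Lemmas \ref{le:finiteExt} and \ref{lem:fgtoloc}.
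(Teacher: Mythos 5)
Your proposal is correct and follows essentially the route the paper intends: the paper states the lemma as an immediate consequence of the equivalence $k\hat{\Upsilon}\mMOD\simeq k\Upsilon\mlocfin$ for $k$ finite, combined with criterion (3) of Lemma \ref{le:finiteExt} and the identification of $\Ext$-groups furnished by Lemma \ref{lem:fgtoloc}, which is exactly the chain of identifications you spell out (including the reduction to $M=k$ via $\Ext^n(M,N)\cong\mathrm{H}^n(\cdot,\Hom_k(M,N))$). Your writeup simply makes explicit the details the paper leaves implicit.
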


Following Serre \cite[\S 2.6, exercise 2]{Ser},
a group $\Upsilon$ is said to be
\emph{good} if for any finite $\hat{\Upsilon}$-module $M$,
the canonical map $\mathrm{H}^n(\hat{\Upsilon},M)\longrightarrow \mathrm{H}^n(\Upsilon,M)$ is an 
isomorphism for all $n$ (note that it is already bijective for $n=0,1$). It is equivalent
to the requirement that $\mathbb{F}_p\Upsilon$ is good for all primes $p$.

\sk

Let $V$ be a finite-dimensional complex vector space and let
$W$ be a finite subgroup of $\mathrm{GL}(V)$. Assume it is a complex reflection group,
\ie, it is generated by elements fixing a hyperplane. Let 
$V_{\mathrm{reg}}=\{v\in V\ |\ \mathrm{Stab}_W(v)=1\}$ and let
$x_0\in V_{\mathrm{reg}}/W$. The braid group
of $W$ is $\pi_1(V_{\mathrm{reg}}/W,x_0)$. We refer to
\cite{BMR} for basic properties of complex reflection groups and braid groups.
\sk

Recall that two groups are \emph{commensurable} if they contain isomorphic subgroups
of finite index. Lemma \ref{le:finiteindex} shows that given $\Upsilon_0$ a group
commensurable with $\Upsilon$, then $k\Upsilon$ is good if and only if
$k\Upsilon_0$ is good.
\sk

The following result generalises \cite[\S 2.6, exercise 2(d,e)]{Ser}.

\begin{prop}\label{prop:braidgood}
If $\Upsilon$ is commensurable with a free group or the braid group of a 
complex reflection group with no exceptional irreducible component
of dimension $\ge 3$, then $\Upsilon$ is good.
\end{prop}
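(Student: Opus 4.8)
The plan is to reduce the statement to a short list of cases and then handle each by a combination of known results and the closure properties already established. First I would recall that goodness is a commensurability invariant: this was observed just before the proposition (via Lemma \ref{le:finiteindex}), so it suffices to prove that a free group is good and that the braid group $B(W)$ of a complex reflection group $W$ with no exceptional irreducible component of dimension $\ge 3$ is good. For free groups, goodness is classical (it is part of \cite[\S 2.6, exercise 2]{Ser}): a free group has cohomological dimension $1$, its profinite completion is a free profinite group, hence also of cohomological dimension $1$, and the comparison map is an isomorphism in degrees $0$ and $1$ for any finite module; so $\mathrm{H}^n$ vanishes on both sides for $n\ge 2$ and the map is trivially an isomorphism.

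Next I would exploit the structure theory of complex reflection groups. Since $V$ decomposes into irreducible $W$-representations, $W$ is a direct product $W = W_1 \times \cdots \times W_r$ of irreducible complex reflection groups, and correspondingly $V_{\mathrm{reg}} \cong \prod_i (V_i)_{\mathrm{reg}}$ (a point is regular iff each component is), so $B(W) \cong \prod_i B(W_i)$. By hypothesis each $W_i$ is either an exceptional group of dimension $\le 2$ or a member of the infinite family $G(de,e,n)$ (including the real reflection groups $S_n$, which is $G(1,1,n)$, and the dihedral groups). If I knew each factor $B(W_i)$ were good, goodness of the product would follow: using Lemma \ref{lem:braid}, goodness of $k\Upsilon$ for $k$ a finite field is equivalent to the comparison isomorphism $\mathrm{H}^n(\hat\Upsilon,M)\to\mathrm{H}^n(\Upsilon,M)$, and for a product one can combine the Lyndon--Hochschild--Serre spectral sequences for $1\to B(W_1)\to B(W)\to B(W_2)\times\cdots\to 1$ on both the discrete and profinite sides (the profinite completion of a product of finitely generated groups being the product of the profinite completions), together with the fact — which I would establish as a separate lemma, or cite from the literature on goodness, e.g. \cite{GrJZZa} — that an extension of a good group by a good group with suitable finiteness is good. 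Alternatively, and more cleanly, I would invoke Lemma \ref{le:product} together with an induction on $r$: the group algebra $k B(W)$ is a tensor product $k B(W_1)\otimes_k \cdots \otimes_k k B(W_r)$ over the finite (hence perfect) field, so if each tensor factor is good then so is the product, and by Lemma \ref{lem:braid} this is exactly goodness of $B(W)$.

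It therefore remains to prove goodness for each individual irreducible factor: the braid groups of $G(de,e,n)$ and of the finitely many exceptional groups of rank $\le 2$. For the exceptional rank $\le 2$ groups, the associated braid groups are, by the work of Broué--Malle--Rouquier \cite{BMR}, either free groups, or surface braid-type groups, or (in rank $2$) closely related to torus knot/link groups and central extensions of triangle groups — in any case each is virtually of a form known to be good, e.g. virtually free or virtually a surface group or built from such by extensions with good kernel and quotient; I would go through the explicit finite list and check goodness case by case, using the commensurability invariance to reduce to the simplest model in each commensurability class. For $G(de,e,n)$ I would use that $B(G(de,e,n))$ is commensurable with (in fact a finite-index subgroup of) the braid group $B(G(de,1,n))$, which is an Artin group of type $B_n$; Artin groups of type $A_n$ and $B_n$ are well known to be good — type $A_n$ because the pure braid group is an iterated extension of free groups (an iterated fibration of configuration spaces), each extension step preserving goodness, and the full braid group is a finite-index overgroup; type $B_n$ reduces to type $A$ via the standard finite-index inclusion into a braid group on more strands. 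The fibration/extension argument is the technical heart: one shows by induction that the pure braid group $P_n$ is good because $P_n \to P_{n-1}$ has free kernel $F_{n-1}$ and good quotient, and an extension $1\to N\to \Gamma\to Q\to 1$ with $N$ finitely generated and good, $Q$ good, and $Q$ acting "nicely" yields $\Gamma$ good — this is the compatibility of the two Lyndon--Hochschild--Serre spectral sequences, which requires knowing the completion sequence $1\to \hat N\to\hat\Gamma\to\hat Q\to 1$ is again exact (true here since $N$ is finitely generated and the relevant groups are good, cf. \cite{GrJZZa}).

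The main obstacle I expect is precisely this last point: ensuring that goodness propagates through group extensions, which is \emph{not} automatic — one needs the profinite completion functor to be exact on the extension (so that $\hat N$ really is the kernel of $\hat\Gamma\to\hat Q$ with the full induced topology) and one needs $\mathrm{H}^*(\hat N,M)$ to be the "right" size, i.e. the comparison map for $N$ must be an isomorphism \emph{as a $Q$-module map}, so that it induces an isomorphism on the $E_2$-pages of the two spectral sequences. Verifying these hypotheses for the free kernels appearing in the configuration-space fibrations, and assembling the exceptional rank-$\le 2$ cases, is where the real work lies; everything else (commensurability invariance, the product step via Lemma \ref{le:product} and Lemma \ref{lem:braid}, the free-group base case) is formal given the machinery already in place.
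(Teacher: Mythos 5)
Your proposal follows essentially the same route as the paper's proof: commensurability invariance of goodness, Lemma \ref{le:product} for the reducible case, goodness of iterated extensions of finitely generated free groups (Serre's exercise, which you reconstruct via the two Lyndon--Hochschild--Serre spectral sequences) applied to the configuration-space fibrations for the infinite family $G(de,e,n)$, and a separate treatment of the rank-$\le 2$ exceptional groups. The one point where the paper is sharper is that last case, which it settles by quoting from \cite{BMR} that the pure braid group of an irreducible rank-$2$ complex reflection group has cyclic centre with free quotient --- precisely the ``central extension over a free group'' structure that your proposed case-by-case check would need to exhibit for each group in the list.
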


\begin{proof}
If $\Upsilon$ is an iterative extension of free groups, then $\Upsilon$
is good by \cite[\S 2.6, exercise 2(d)]{Ser}.
\sk

Assume $\Upsilon$ is the braid group of a complex reflection
group of type $G(d,e,n)$. Then $\Upsilon$ is commensurable with an iterated extension of free
groups (cf. \cite{Na} or \cite[Remark p.152, Proposition 3.5, Lemma 3.9 and Corollary 3.32]{BMR}).
Consequently, $\Upsilon$ is good.

\sk

Finally, if $\Upsilon$ is the pure braid group of an irreducible $2$-dimensional
complex reflection group, then $Z(\Upsilon)$ is  cyclic and
$\Upsilon/Z(\Upsilon)$ is a free group \cite[p.146]{BMR},
hence $\Upsilon$ is good.
\sk

The case of the braid group of a non-irreducible complex reflection group
follows from Lemma \ref{le:product}.
\end{proof}

\begin{rmk}
We expect that the braid group of any finite complex reflection group is good.
\end{rmk}

Let $H$ be a finite normal subgroup of $\Upsilon$. It follows from
\cite[\S 2.6, exercise 2]{Ser} that $\Upsilon/H$ is good if and only if $\Upsilon$
is good and there is a finite index subgroup of
$\Upsilon$ intersecting $H$ trivially. The following proposition follows from Lemma \ref{le:goodrepresentative}.

\begin{prop}\label{prop:goodrep}
Let $k$ be a finite field.  If $\Upsilon/H$ is
good, then the quotient map induces an equivalence
from the full subcategory of $\Ho(k\Upsilon, kH)$ of complexes $C$
such that $\Res_H \, C \in\Comp^b(kH\mproj)$ to the full subcategory
of $D(k\Upsilon)$ of complexes $D$ such that $\Res_H \, D$ is perfect.
\end{prop}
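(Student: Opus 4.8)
The plan is to deduce the statement directly from Lemma~\ref{le:goodrepresentative}, applied with $A=kH$ and $B=k\Upsilon$. With this choice one has $\Ho(B,A)=\Ho(k\Upsilon,kH)$ and $D(B)=D(k\Upsilon)$, the functors $R_{\Ho}$ and $R_D$ are those induced by $\Res_H$, and, unwinding the definitions, $R_D^{-1}(\Comp^b(kH\mproj))$ is exactly the full subcategory of $D(k\Upsilon)$ of complexes $D$ with $\Res_H\,D$ perfect, while $R_{\Ho}^{-1}(\Comp^b(kH\mproj))$ is exactly the full subcategory of $\Ho(k\Upsilon,kH)$ of complexes $C$ such that $\Res_H\,C$ is isomorphic in $\Ho(kH)$ to an object of $\Comp^b(kH\mproj)$, i.e. $\Res_H\,C\in\Comp^b(kH\mproj)$ in the homotopy category. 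So the second assertion of Lemma~\ref{le:goodrepresentative} is precisely the asserted equivalence, and the whole proof reduces to checking that the hypotheses of that lemma hold here: that $kH$ is finite-dimensional, that there is a subalgebra $B'\subseteq k\Upsilon$ making $k\Upsilon$ into a finitely generated projective $(kH,B')$-bimodule, and (for the $\mproj$ rather than $\mPROJ$ conclusion) that $k\Upsilon$ is good.

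The first point is immediate since $H$ is finite. For the second, I would use the fact recalled just before the statement: because $H$ is a finite normal subgroup of $\Upsilon$ with $\Upsilon/H$ good, \cite[\S 2.6, exercise 2]{Ser} shows that $\Upsilon$ is good and that there is a finite-index subgroup $\Upsilon_1\le\Upsilon$ with $\Upsilon_1\cap H=1$. I would then take $B'=k\Upsilon_1$. Viewed as a $(kH,k\Upsilon_1)$-bimodule via left and right multiplication, $k\Upsilon$ is a permutation module for $kH\otimes_k(k\Upsilon_1)^{\opp}$ on the basis $\Upsilon$, on which $H\times\Upsilon_1$ acts by $(h,u)\colon x\mapsto hxu$. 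The orbits are the double cosets $Hx\Upsilon_1$, of which there are finitely many since $[\Upsilon:\Upsilon_1]<\infty$, and the stabiliser of a point $x$ maps isomorphically onto $H\cap x\Upsilon_1x^{-1}$, which is trivial because $H$ is normal and $H\cap\Upsilon_1=1$. Hence every orbit summand is free of rank one over $kH\otimes_k(k\Upsilon_1)^{\opp}$, so $k\Upsilon$ is a finitely generated free, in particular projective, $(kH,k\Upsilon_1)$-bimodule.

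For the third point I would argue that goodness of $\Upsilon$ implies goodness of $k\Upsilon$ using that $k$ is finite: every finite-dimensional $k\Upsilon$-module is locally finite, hence corresponds under the equivalence $k\hat\Upsilon\mMOD\iso k\Upsilon\mlocfin$ to a continuous finite $\hat\Upsilon$-module, so goodness of $\Upsilon$ supplies exactly the bijectivity of $\mathrm{H}^n(\hat\Upsilon,M)\to\mathrm{H}^n(\Upsilon,M)$ demanded by Lemma~\ref{lem:braid}. With all of this in place, Lemma~\ref{le:goodrepresentative} yields the equivalence and completes the proof. The only step carrying real content is the double-coset analysis of the bimodule $k\Upsilon$ above, and even there the substance amounts to the normality of $H$ together with the finiteness of $[\Upsilon:\Upsilon_1]$; I do not anticipate a genuine obstacle.
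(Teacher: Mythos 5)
Your proposal is correct and is exactly the paper's intended argument: the paper derives the proposition from Lemma~\ref{le:goodrepresentative} together with the preceding remark that goodness of $\Upsilon/H$ yields goodness of $\Upsilon$ and a finite-index subgroup $\Upsilon_1$ meeting $H$ trivially, and your choice $B'=k\Upsilon_1$ with the double-coset freeness check and the appeal to Lemma~\ref{lem:braid} just fills in the details the paper leaves implicit.
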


\subsection{Complexes of chains with compact support\label{sect:generalvar}}

Let $k$ be a finite field of characteristic $\ell$. By variety, we will mean a quasi-projective scheme over an algebraically closed field of characteristic $p \neq \ell$. We will consider \'etale  sheaves of $k$-vector spaces. Let us recall the construction of good representatives of the complex of chains up to
homotopy. For finite group actions, the existence of such complexes is due to
Rickard \cite{Ri5}. We need here to use \cite[\S 2]{Rou}, which provides a
direct construction compatible with the action of infinite monoids.

\sk

Let $X$ by a variety acted on by a monoid $\Upsilon^+$ acting by equivalences
of the \'etale site. Let $H$ be a finite normal subgroup of $\Upsilon^+$
and $\Upsilon$ be the group deduced from $\Upsilon^+$. 
\sk

We consider   the complex of cohomology with compact support
of $X$ with value in $k$, constructed using the Godement resolution and we denote by 
$\gRgc(X)$ its $\tau_{\leq 2\dim X}$-truncation. It is viewed
as an object of $\Ho^b(k\Upsilon, kH)$. It is independent of the choice of the
compactification, up to a unique isomorphism. Most functorial properties in 
$D^b(k\Upsilon)$ lift to $\Ho^b(k\Upsilon,kH)$, in particular the triangle 
associated with an open-closed decomposition: given 
$Z$ a $\Upsilon^+$-stable closed subvariety of $X$ and $U$ the open complement,
there is a distinguished triangle in $\Ho^b(k\Upsilon,kH)$

\centers{$\gRgc(U)\longrightarrow\gRgc(X)\longrightarrow\gRgc(Z)\rightsquigarrow.$}

\begin{lem}\label{le:freeaction}
Assume the stabilisers of points in $X$ under $H$ are $\ell'$-groups. Then  $\Res_H \, \gRgc(X)$ is a bounded complex of projective modules and it is perfect.

Furthermore, if  $\Upsilon/H$ is good, then $\gRgc(X)$ is isomorphic in $\Ho^b(k\Upsilon,kH)$
to a complex $\tRgc(X)$ such that $\Res_{H} \tRgc(X) \in\Comp^b(kH\mproj)$. 

\end{lem}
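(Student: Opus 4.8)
\noindent The plan is to read off the first assertion from the shape of the Godement construction together with the $\ell'$-stabiliser hypothesis, and then to deduce the second assertion essentially formally from Proposition~\ref{prop:goodrep}. For the first assertion, I would recall (Rickard~\cite{Ri5} for finite group actions, \cite[\S 2]{Rou} in the form needed here) that $\gRgc(X)$ is a bounded complex, concentrated in degrees $[0,2\dim X]$, whose terms restricted to $kH$ are direct summands of permutation $kH$-modules of the form $k[H/H_x]=\Ind_{H_x}^{H}k$, with $H_x$ running over the stabilisers of points of $X$. Since each $H_x$ is an $\ell'$-group, $kH_x$ is semisimple, so $k$ is a direct summand of $kH_x$ and hence $k[H/H_x]$ is a direct summand of $kH$; in particular it is projective. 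Thus $\Res_H\,\gRgc(X)$ is a bounded complex of projective $kH$-modules. Its total cohomology is $\bigoplus_i\Hc^i(X,k)$, which is finite-dimensional because $X$ is a variety; a bounded complex of projective $kH$-modules with finite-dimensional total cohomology is quasi-isomorphic to a bounded complex of finitely generated projectives (equivalently, one may invoke the classical fact that $\Rgc(X,k)$ is perfect whenever a finite group acts on a variety with $\ell'$-stabilisers), so $\Res_H\,\gRgc(X)$ is perfect.

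\sk

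\noindent For the second assertion I would apply Proposition~\ref{prop:goodrep}. Let $D\in D(k\Upsilon)$ be the image of $\gRgc(X)$ under the quotient functor $\Ho^b(k\Upsilon,kH)\longrightarrow D(k\Upsilon)$. By the first part $\Res_H\,D$ is perfect, so Proposition~\ref{prop:goodrep} (applicable since $\Upsilon/H$ is good) furnishes a complex $\tRgc(X)$ in $\Ho(k\Upsilon,kH)$ with $\Res_H\,\tRgc(X)\in\Comp^b(kH\mproj)$ — hence bounded, so $\tRgc(X)\in\Ho^b(k\Upsilon,kH)$ — and with image in $D(k\Upsilon)$ isomorphic to $D$. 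It remains to upgrade this to an isomorphism $\gRgc(X)\iso\tRgc(X)$ inside $\Ho^b(k\Upsilon,kH)$, not merely in $D(k\Upsilon)$. For that, note that $k\Upsilon$ is free, hence projective, as a $kH$-module, and that both $\gRgc(X)$ and $\tRgc(X)$ have restriction to $kH$ a bounded complex of projectives, hence homotopically projective; so both lie in the subcategory $\mathcal{T}$ of Lemma~\ref{le:homotopicallyprojective} (with $B=k\Upsilon$, $A=kH$), on which the quotient functor to $D(k\Upsilon)$ is an equivalence, in particular fully faithful. Since the two images in $D(k\Upsilon)$ are isomorphic, the objects are isomorphic in $\mathcal{T}$, and boundedness gives the desired isomorphism in $\Ho^b(k\Upsilon,kH)$.

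\sk

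\noindent I expect the main obstacle to lie in the first assertion: making precise that the $\tau_{\le 2\dim X}$-truncated Godement complex is, after restriction to $kH$, genuinely a bounded complex of \emph{projective} modules — the potentially delicate point being the top term $\ker d^{2\dim X}$, where the $\ell'$-stabiliser hypothesis has to be combined with the self-injectivity of $kH$ (or, what amounts to the same, with the known perfectness of $\Rgc(X,k)$ under $\ell'$-stabilisers) — together with the attendant passage from a bounded complex of possibly infinitely generated projectives with finite-dimensional cohomology to an honestly perfect complex, which relies on a cancellation argument special to finite group algebras. Once that is settled, the second assertion is a formal consequence of Proposition~\ref{prop:goodrep} and the full faithfulness provided by Lemma~\ref{le:homotopicallyprojective}.
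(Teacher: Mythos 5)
Your argument is correct and follows the same route as the paper: the first assertion is exactly what the paper extracts from the Rickard--Rouquier construction (terms induced from $\ell'$-stabilisers, hence projective) together with the Deligne--Lusztig perfectness result, and the second is the paper's appeal to Proposition~\ref{prop:goodrep}, with your extra step via Lemma~\ref{le:homotopicallyprojective} (to lift the isomorphism from $D(k\Upsilon)$ back to $\Ho^b(k\Upsilon,kH)$) being a correct unwinding of what that proposition already provides. The delicate point you flag about the top truncation term is precisely what the citation of \cite[\S 2.5]{Rou} is meant to cover, so no gap remains.
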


\begin{proof}
It follows from \cite[\S 2.5]{Rou} that
$\Res_{H} \, \gRgc(X)$ is a bounded complex of projective $kH$-modules and from \cite[Proposition 3.5]{DeLu} that it is perfect. When $\Upsilon/H$ is good, we obtain the second part of the lemma from
Proposition \ref{prop:goodrep}.
\end{proof}

We explain now how to describe this finer invariant $\gRgc$  from the classical derived category invariant $\Rgc$ in general, by
filtrating $X$. We define a filtration of $X$ by open
$\Upsilon^+$-stable subvarieties $X_{\le i}=\{x\in X \, | \, |\Stab_H(x)|\le i\}$. Each variety $X_{\leq i-1}$ is open in $X_{\leq i}$ and the complement is a locally closed subvariety of $X$ which we will denote by $X_{i}$.
Given $Q\subset H$, we put $X_Q=\{x\in X \, | \, \Stab_H(x)= Q\}$. Given $C$ an  $ \Upsilon$-conjugacy class of subgroups of $H$, we put $X_C=\coprod_{Q\in C}X_Q$. We have a decomposition into open and closed
subvarieties $X_i=\coprod_CX_C$, where $C$ runs over the set of $ \Upsilon$-conjugacy classes of subgroups of $H$ of order $i$. Given $Q \in C$, the map $(\gamma,x) \longrightarrow\gamma x \gamma^{-1}$ induces an isomorphism $\mathrm{Ind}_{N_{ \Upsilon}(Q)}^{\Upsilon} X_Q \iso X_C$.
As a consequence, we have a distinguished triangle in $\Ho^b(k\Upsilon,kH)$

\centers{$\gRgc(X_{\le i-1})\longrightarrow \gRgc(X_{\le i})\longrightarrow
\displaystyle\bigoplus_{Q} \Ind_{N_{\Upsilon}(Q)}^{\Upsilon} \gRgc(X_{Q})\rightsquigarrow,$}

\noindent where $Q$ runs over representatives of $\Upsilon$-conjugacy classes of subgroups
of order $i$ of $H$.

\sk

The action of $N_{H}(Q)$ on $X_Q$ factors through a free action of $
N_H(Q)/Q$. 
Lemmas  \ref{le:freeaction} and \ref{le:goodrepresentative} show that $\gRgc(X_Q)$ is up to isomorphism the
unique object of $\Ho^b(kN_\Upsilon(Q), kN_H(Q))$ isomorphic in $D^b(kN_\Upsilon(Q))$ to $\Rgc(X)$ and whose restriction
to $kN_H(Q)$ is homotopy equivalent to a bounded complex of projective $(kN_H(Q)/Q)$-modules.

\mk

Recall that a $kH$-module is an \emph{$\ell$-permutation module} if it is a direct summand of a permutation module. The filtration of $\gRgc(X)$
above shows that it is isomorphic in $\Ho^b(k\Upsilon, kH)$ to a bounded complex
 of $k\Upsilon$-modules whose restrictions to $H$ are $\ell$-permutation modules. The second part of Lemma 
\ref{le:freeaction} shows the following stronger finiteness statement.

\begin{prop}\label{prop:finiteness}Assume  that $\Upsilon/H$ is good (cf. \S \hyperref[sect:goodgroups]{\ref{sect:goodgroups}.3}). Then the complex $\gRgc(X)$ is isomorphic in $\Ho^b(k\Upsilon,kH)$
to a bounded complex $\tRgc(X)$
of $k\Upsilon$-modules  whose restrictions to $H$ are finitely
generated $\ell$-permutation modules.
\end{prop}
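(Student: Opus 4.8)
The plan is to combine the open–closed filtration of $X$ described just above with the finiteness statement of Lemma~\ref{le:freeaction}, using induction on the stratification by stabiliser order. First I would recall that $\gRgc(X)$ carries a finite filtration by the subobjects $\gRgc(X_{\le i})$, with successive quotients $\bigoplus_Q \Ind_{N_\Upsilon(Q)}^\Upsilon \gRgc(X_Q)$, where $Q$ runs over representatives of $\Upsilon$-conjugacy classes of subgroups of $H$ of order $i$ (and $i$ ranges over the finitely many orders of subgroups of the finite group $H$). Since $X_{\le i}$ is a $\Upsilon^+$-stable open subvariety, each $\gRgc(X_{\le i})$ is an object of $\Ho^b(k\Upsilon,kH)$, and the triangles live in that category. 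It therefore suffices to prove the statement for each stratum piece $\Ind_{N_\Upsilon(Q)}^\Upsilon \gRgc(X_Q)$: a bounded complex of $k\Upsilon$-modules whose restriction to $H$ consists of finitely generated $\ell$-permutation modules is closed under taking cones (extensions of such complexes in $\Ho^b$), so assembling the strata from the bottom up along the distinguished triangles yields the result for $\gRgc(X)=\gRgc(X_{\le |H|})$.

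For a single stratum, the action of $N_H(Q)$ on $X_Q$ factors through a \emph{free} action of $N_H(Q)/Q$. Hence the stabilisers of points of $X_Q$ under $N_H(Q)$ are contained in $Q$; they need not be $\ell'$-groups in general, so one cannot apply Lemma~\ref{le:freeaction} to $N_H(Q)$ directly. Instead I would work modulo $Q$: since $Q$ acts trivially, $\gRgc(X_Q)$ is (the inflation of) a complex for $N_\Upsilon(Q)/Q$ acting on $X_Q$ with $H$-action factoring through the free action of the finite normal subgroup $N_H(Q)/Q$. Now the point-stabilisers under $N_H(Q)/Q$ are trivial, hence $\ell'$-groups, and — provided $N_\Upsilon(Q)/(Q\cdot N_H(Q))$, equivalently the relevant quotient group, is good — the second part of Lemma~\ref{le:freeaction} gives a representative of $\gRgc(X_Q)$ whose restriction to $k(N_H(Q)/Q)$ is a bounded complex of finitely generated projective modules. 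Inflating back, its restriction to $kN_H(Q)$ is a bounded complex of finitely generated $\ell$-permutation modules (projective $k(N_H(Q)/Q)$-modules inflate to $\ell$-permutation $kN_H(Q)$-modules), and applying the exact, finiteness-preserving functor $\Ind_{N_\Upsilon(Q)}^\Upsilon$ keeps the restriction to $kH$ a finitely generated $\ell$-permutation module.

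The point requiring care — and the main obstacle — is the hypothesis "good'' needed to invoke Lemma~\ref{le:freeaction} for each stratum: one needs the quotient of $N_\Upsilon(Q)$ by the finite normal subgroup $Q\cdot N_H(Q)$ (i.e.\ by the image of $N_H(Q)$) to be good, not just $\Upsilon/H$. I would handle this by the criterion recalled after Proposition~\ref{prop:braidgood}: if $H$ is a finite normal subgroup of $\Upsilon$ with $\Upsilon/H$ good, then $\Upsilon$ is good and has a finite-index subgroup meeting $H$ trivially; since $N_\Upsilon(Q)$ has finite index in the good group $\Upsilon$ it is good (Lemma~\ref{le:finiteindex}), and $N_H(Q)$ is a finite normal subgroup of it, so $N_\Upsilon(Q)/N_H(Q)$ is good exactly when $N_\Upsilon(Q)$ has a finite-index subgroup meeting $N_H(Q)$ trivially — which follows by intersecting a finite-index subgroup of $\Upsilon$ avoiding $H$ with $N_\Upsilon(Q)$. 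Thus the goodness hypothesis propagates to every stratum, and the inductive assembly along the triangles completes the proof.
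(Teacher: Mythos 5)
Your argument is correct and is essentially the paper's own proof: the proposition is obtained by running the stratification of $X$ by stabiliser order through the distinguished triangles in $\Ho^b(k\Upsilon,kH)$ and applying the second part of Lemma \ref{le:freeaction} to each stratum $X_Q$, where $N_H(Q)/Q$ acts freely. Your verification that goodness of $\Upsilon/H$ propagates to $N_\Upsilon(Q)/N_H(Q)$ (via Serre's criterion and the finite index of $N_\Upsilon(Q)$ in $\Upsilon$) is a detail the paper leaves implicit, and you supply it correctly.
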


In the setting of Brou\'e's abelian defect conjecture, we have $\Upsilon = H \rtimes B$, where $B$ is
the braid group of a complex reflection group, so that Proposition \ref{prop:finiteness} applies when
the reflection group has no exceptional component of dimension $\geq 3$ (cf. Proposition \ref{prop:braidgood}).

\sk

Let $P$ be an $\ell$-subgroup of $H$. Given $V$ an $\ell$-permutation $kH$-module, $\Br_P(V)$ is defined as the image of  the invariants $V^P$ in the coinvariants $V_P = V \otimes_{kP}k$. This construction extends to complexes of $\ell$-permutation modules. The description of
$\gRgc(X)$ above shows that the injection $X^P\hookrightarrow X$
induces an isomorphism 

\centers{$\Br_P(\gRgc(X))\iso \gRgc(X^P)$}

\noindent in $\Ho^b(kN_\Upsilon(P), kN_H(P)/P)$ (cf. also \cite[Theorem 2.29]{Rou} and \cite[Theorem 4.2]{Ri5}).

\begin{rmk}\label{rmk:ramifiedlocus} The complex $\Res_H \, \gRgc(X_1)$ is homotopy equivalent to a bounded complex of finitely
generated projective modules since by definition $H$ acts freely on $X_1$. As a consequence, the canonical map
$\gRgc(X)\longrightarrow \gRgc(X\smallsetminus X_1)$ is an isomorphism
in $\Ho^b(kH\mMOD)/\Ho^b(kH\mproj)$ (compare with Lemma \ref{lem:eigenspace}). 
\end{rmk}

\begin{rmk} Note that a finiteness property can be obtained more directly for Galois
actions.
Let $\check{C}(X,k)$ be the \v{C}ech complex of $X$, limit
of the \v{C}ech complexes $\check{C}(\mathcal{F},k)$ over the category of  \'etale coverings
$\mathcal{F}$ of $X$. The action of $\Upsilon$ on that category induces an action on
$\check{C}(X,k)$.
\sk

Assume now $X$ is endowed with a Frobenius endomorphism $F$ defining a
rational structure over a finite field. Let $\alpha\in\check{C}(X,k)$. There is
a covering $\mathcal{F}$ such that $\alpha$ is in the image of $\check{C}(\mathcal{F},k)$. The
covering $\mathcal{F}$ is isomorphic to a covering $\mathcal{F}'$ whose elements are stable under
the action of $F^n$ and such that $F^n$
acts trivially on $\check{C}(\mathcal{F}',k)$, for some $n\ge 1$.
It follows that $F^n(\alpha)=\alpha$.
So, $\check{C}(X,k)$ is locally finite for the action of $F$.
\end{rmk}

\section{Deligne-Lusztig varieties\label{sect:dlvar}}

Let $\G$ be a (not necessarily connected) reductive algebraic group, together with an isogeny $F$, some power of which is a Frobenius endomorphism. In other words, there exists a positive integer $n$ 
such that $F^n$ defines a split $\mathbb{F}_{q^n}$-structure on $\G$ for a certain power $q^n$ of the characteristic $p$, where $q \in \mathbb{R}_{>0}$. Given an $F$-stable algebraic subgroup $\mathbf{H}$ of $\G$, we will denote by $H$ the finite group of fixed points $\mathbf{H}^F$.

\sk

Let $\P = \mathbf{L} \mathbf{U}$ be a parabolic subgroup of $\G$ with unipotent radical $\U$ and an $F$-stable Levi complement $\mathbf{L}$.  We define the parabolic Deligne-Lusztig varieties

\centers{$ \begin{psmatrix}[colsep=2mm,rowsep=10mm] 
\Y_\G(\U) & = \, \big\{ g \in \G \ \big| \ g^{-1}F(g) \in F(\U)\big\}/ (\U \cap F(\U)) \\
					\X_\G(\P) & = \, \big\{ g \in \G \ \big| \ g^{-1}F(g) \in F(\P)\big\}/ (\P \cap F(\P))
\psset{arrows=->>,nodesep=3pt} 
\everypsbox{\scriptstyle} 
\ncline{1,1}{2,1}<{\pi_L}>{/ \, L}		
\end{psmatrix}$}

\sk

\noindent where $\pi_L$ denotes the restriction to $\Y_\G(\U)$ of the canonical projection $\G/(\U\cap F(\U)) \longrightarrow \G/(\P \cap F(\P))$. The varieties $\Y_\G(\U)$ and $\X_\G(\P)$  are quasi-projective varieties and endowed with a left action of $G$ by left multiplication. Furthermore, $L$ acts on the right on $\Y_{\G}(\U)$  by right multiplication and $\pi_L$ is isomorphic to the corresponding quotient map, so that it induces a $G$-equivariant isomorphism of varieties $\Y_\G(\U) / L \iso \X_\G(\P)$. 

\subsection{Fixed points and endomorphisms} 

 \noindent \textbf{\thesubsection.1 Description of fixed points.} The claim in \cite[Lemma 4.1]{Rou} can be extended to parabolic Deligne-Lusztig varieties (cf. \cite[Proposition 4.7]{DeLu} and \cite[proof of Lemma 12.3]{DM} for a related result).

\begin{lem}\label{lem:fixedpts}
Let $S$ be a finite solvable subgroup of $\Aut(\mathbf{G})$ of order prime to
$p$. Assume $S$ commutes with the action of $F$ and stabilises $\mathbf{U}$.
Then the inclusion $\mathbf{G}^S\hookrightarrow \mathbf{G}$ induces an
isomorphism

\centers{$Y_{\mathbf{G}^S}(\mathbf{U}^S)\, \iso \, Y_{\mathbf{G}}(\mathbf{U})^S.$}

\end{lem}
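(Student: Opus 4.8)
The plan is to prove the isomorphism $Y_{\mathbf{G}^S}(\mathbf{U}^S) \iso Y_{\mathbf{G}}(\mathbf{U})^S$ by induction on the order of the solvable group $S$, reducing to the case where $S$ is cyclic of prime order $r \neq p$. For the base case, since $S$ is solvable, we may pick a normal subgroup $S' \lhd S$ with $S/S'$ cyclic of prime order; by induction applied to $S'$ acting on $\mathbf{G}$ (noting $S'$ is again finite solvable of order prime to $p$, commutes with $F$, and stabilises $\mathbf{U}$) we get $Y_{\mathbf{G}^{S'}}(\mathbf{U}^{S'}) \iso Y_{\mathbf{G}}(\mathbf{U})^{S'}$, and then applying the cyclic case to $S/S'$ acting on $\mathbf{G}^{S'}$ (with Levi/unipotent data $\mathbf{U}^{S'}$, and observing $F$ still acts and commutes) gives $Y_{(\mathbf{G}^{S'})^{S/S'}}((\mathbf{U}^{S'})^{S/S'}) \iso Y_{\mathbf{G}^{S'}}(\mathbf{U}^{S'})^{S/S'}$; composing and using $(\mathbf{G}^{S'})^{S/S'} = \mathbf{G}^S$, $(\mathbf{U}^{S'})^{S/S'}=\mathbf{U}^S$, $Y_{\mathbf{G}}(\mathbf{U})^{S'})^{S/S'} = Y_\mathbf{G}(\mathbf{U})^S$ yields the claim. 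So it suffices to treat $S = \langle s\rangle$ with $s$ of prime order $r \neq p$.

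For the cyclic prime-order case, the key geometric inputs are: (i) $\mathbf{G}^S$ is a (possibly disconnected) reductive group, since $S$ has order prime to $p$ (a standard fact, e.g. from the theory of reductive groups with a finite group of semisimple automorphisms); (ii) $\mathbf{U}^S$ is the unipotent radical of a parabolic subgroup $\mathbf{P}^S = (\mathbf{L}\mathbf{U})^S = \mathbf{L}^S \mathbf{U}^S$ of $\mathbf{G}^S$, so that $Y_{\mathbf{G}^S}(\mathbf{U}^S)$ makes sense; this follows because $S$ stabilises $\mathbf{U}$, hence $\mathbf{P} = N_\mathbf{G}(\mathbf{U})$, hence $\mathbf{P}$ and a Levi decomposition can be chosen $S$-equivariantly. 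Then I would follow the argument of \cite[Lemma 4.1]{Rou} (and compare \cite[Proposition 4.7]{DeLu}). The point is that $Y_\mathbf{G}(\mathbf{U}) = \{g \in \mathbf{G} \mid g^{-1}F(g) \in F(\mathbf{U})\}/(\mathbf{U}\cap F(\mathbf{U}))$, and $S$ acts by its action on $\mathbf{G}$; a point $\bar g \in Y_\mathbf{G}(\mathbf{U})$ is $S$-fixed iff for each $s$ there is $u_s \in \mathbf{U}\cap F(\mathbf{U})$ with $s(g) = g u_s$. The collection $(u_s)_s$ is a $1$-cocycle of $S$ with values in the group $\mathbf{U}\cap F(\mathbf{U})$ (with the twisted action $s \cdot u = g^{-1}s(gug^{-1})g$, or more precisely the action making things consistent), and $\mathbf{U}\cap F(\mathbf{U})$ is a connected unipotent group, hence a successive extension of vector groups $\mathbb{G}_a$, on which $S$ (of order prime to $p = \mathrm{char}$) acts. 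Since $H^1(S, \mathbb{G}_a) = 0$ and more generally $H^1(S, V) = 0$ for any $\mathbb{F}_p[S]$-module underlying a connected unipotent group when $|S|$ is prime to $p$ — by the usual dévissage along the lower central / derived series together with vanishing of cohomology of the cyclic group $S$ on $p$-torsion-free... more precisely on $\mathbb{F}_p$-vector spaces with $|S|$ invertible — the cocycle is a coboundary: $u_s = v^{-1} s(v)$ (suitably interpreted) for some $v \in \mathbf{U}\cap F(\mathbf{U})$. Replacing $g$ by $gv$, we may assume $s(g) = g$ for all $s \in S$, i.e. $g \in \mathbf{G}^S$, and then $g^{-1}F(g) \in F(\mathbf{U}) \cap \mathbf{G}^S = F(\mathbf{U}^S)$ (using that $S$ commutes with $F$). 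This shows the map $Y_{\mathbf{G}^S}(\mathbf{U}^S) \to Y_\mathbf{G}(\mathbf{U})^S$ is surjective; injectivity is the analogous cohomological statement at the level of $H^0$ together with the fact that two representatives $g, g'$ of the same point differ by $\mathbf{U}\cap F(\mathbf{U})$, and an $S$-fixed point of the quotient lifts uniquely modulo $(\mathbf{U}\cap F(\mathbf{U}))^S = \mathbf{U}^S \cap F(\mathbf{U}^S)$ — again using $H^1(S, \mathbf{U}\cap F(\mathbf{U})) = 0$ to control the ambiguity. That the bijective morphism of varieties is actually an isomorphism of varieties follows since both sides are smooth (or: since the fixed-point scheme $Y_\mathbf{G}(\mathbf{U})^S$ is smooth, as $|S|$ is invertible, and the map is a bijective morphism between smooth varieties over an algebraically closed field of characteristic $p$ inducing isomorphisms on tangent spaces — one checks the tangent map via the same cohomology vanishing on the Lie algebra level, $H^1(S, \mathrm{Lie}) = 0$).

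Concretely the steps, in order, are: (1) reduce to $S$ cyclic of prime order $r \neq p$ by solvability and induction; (2) record that $\mathbf{G}^S$ is reductive and $\mathbf{U}^S$ is the unipotent radical of the parabolic $\mathbf{P}^S$ of $\mathbf{G}^S$, so the left-hand side is defined; (3) set up the cocycle description of $S$-fixed points of $Y_\mathbf{G}(\mathbf{U})$ in the connected unipotent group $\mathbf{U} \cap F(\mathbf{U})$; (4) prove $H^1(S, \mathbf{U}\cap F(\mathbf{U})) = 0$ by dévissage to $\mathbb{G}_a$'s and the vanishing of cohomology of $S$ on $\mathbb{F}_p$-vector spaces when $r \neq p$; (5) deduce surjectivity and injectivity of the natural morphism $Y_{\mathbf{G}^S}(\mathbf{U}^S) \to Y_\mathbf{G}(\mathbf{U})^S$; (6) upgrade the bijective morphism to an isomorphism using smoothness of the fixed-point scheme and the tangent-space computation (same vanishing for $\mathrm{Lie}$).

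I expect the main obstacle to be step (6) — or rather, making precise that the construction genuinely produces an \emph{isomorphism of varieties} and not merely a bijective morphism (which in characteristic $p$ need not be an isomorphism, e.g. Frobenius). The clean way is to invoke that $Y_\mathbf{G}(\mathbf{U})^S$, as the scheme-theoretic fixed points of an action of a linearly reductive (since $|S|$ prime to $p$) finite group scheme on a smooth variety, is smooth, and that its tangent space at a fixed point $\bar g$ is $(T_{\bar g} Y_\mathbf{G}(\mathbf{U}))^S$; then one identifies $T_{\bar g} Y_\mathbf{G}(\mathbf{U})$ with an appropriate quotient of Lie algebras carrying the $S$-action, takes $S$-invariants, and checks it agrees with $T_{\bar g} Y_{\mathbf{G}^S}(\mathbf{U}^S)$ — which again comes down to $\mathrm{Lie}(\mathbf{G})^S = \mathrm{Lie}(\mathbf{G}^S)$, $\mathrm{Lie}(\mathbf{U})^S = \mathrm{Lie}(\mathbf{U}^S)$, valid because $|S|$ is invertible so taking $S$-invariants is exact. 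The rest of the argument is a routine adaptation of \cite[Lemma 4.1]{Rou}; the only mild subtlety beyond that reference is that $\mathbf{G}$ here is allowed to be disconnected and the Levi $\mathbf{L}$ is only $F$-stable (not necessarily $S$-stable a priori), but since $S$ stabilises $\mathbf{U}$ it stabilises $\mathbf{P} = N_\mathbf{G}(\mathbf{U})$, and one may choose an $S$-stable Levi complement (Levi complements in $\mathbf{P}$ are permuted transitively by $\mathbf{U}$, and $H^1(S, \mathbf{U}) = 0$), so no essential new difficulty arises.
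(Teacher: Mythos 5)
Your proposal is correct, and the overall dévissage matches the paper's, but the engine of the base case is genuinely different. Both proofs reduce along solvability: the paper picks a non-trivial normal $\ell$-subgroup $S_1\lhd S$, treats it directly, and inducts with $S/S_1$ acting on $\G^{S_1}$; you go one step further down to cyclic groups of prime order, which is harmless. The divergence is in how one shows that an $S$-fixed point of $Y_{\G}(\U)$ lifts to an $S$-fixed point of $\mathcal{L}_{\G}^{-1}(F(\U))$. You do this via the vanishing of the non-abelian cohomology $H^1(S,\U\cap F(\U))$, by dévissage of the split unipotent group $\U\cap F(\U)$ to vector groups on which $S$ acts $\mathbb{F}_p$-linearly with $|S|$ invertible — the classical Lang-theorem-style argument, as in \cite[proof of Lemma 12.3]{DM}. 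The paper instead argues geometrically: for $S$ an $\ell$-group ($\ell\neq p$) it takes the fibre $V$ over an $S$-fixed point (an affine space, being a torsor under $\U\cap F(\U)$), uses the stratification by stabilisers to get $\ell\mid\chi_c(V\smallsetminus V^S)$, and concludes $V^S\neq\emptyset$ from $\chi_c(V)=1$; this is the argument that directly extends \cite[Lemma 4.1]{Rou}. The two mechanisms establish the same surjectivity statement; a bonus of yours is that $H^1(S,\U\cap F(\U))=1$ holds for \emph{any} finite $p'$-group $S$, so your steps (3)--(5) already give the lemma without the reduction of step (1), whereas the Euler-characteristic argument genuinely uses the $\ell$-group hypothesis and hence the solvability dévissage. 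Two small remarks: your injectivity only needs the $H^0$ identity $(\U\cap F(\U))^S=\U^S\cap F(\U^S)$, not $H^1$; and your step (6), upgrading the bijective morphism to an isomorphism of varieties via smoothness of the fixed-point locus and the tangent-space computation, is a point the paper's proof leaves implicit, so being explicit about it is a genuine improvement rather than a detour.
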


\begin{proof}
Denote by $\mathcal{L}_{\mathbf{G}}:\mathbf{G}\to\mathbf{G},g\longmapsto
g^{-1}F(g)$ the Lang map. We have a commutative diagram

\centers{$\xymatrix{
\mathcal{L}_{\mathbf{G}^S}^{-1} (F(\mathbf{U}^S))\ar[r]^-\sim \ar[d]&
\mathcal{L}_{\mathbf{G}}^{-1}(F(\mathbf{U}))^S \, \ar[d] \ar@{^{(}->}[r]
& \mathcal{L}_{\mathbf{G}}^{-1}(F(\mathbf{U})) \ar[d]^\alpha \\
Y_{\mathbf{G}^S}(\mathbf{U}^S)\, \ar@{^{(}->}[r] & Y_{\mathbf{G}}(\mathbf{U})^S\,
\ar@{^{(}->}[r] & Y_{\mathbf{G}}(\mathbf{U})
}$}

\noindent Assume $S$ is an $\ell$-group for some prime $\ell$. Let
$y\in Y_{\mathbf{G}}(\mathbf{U})^S$ and $V=\alpha^{-1}(y)$, an affine
space. The stratification of $V$ by stabilisers as in \S \ref{sect:generalvar} shows that
$\ell \mid \sum_i (-1)^i\dim\Hc^i(V\smallsetminus V^S,\mathbb{F}_\ell)$. We deduce that
$\Hc^*(V^S,\mathbb{F}_\ell)\not=0$, hence $V^S{\not=}\emptyset$. This proves
the lemma when $S$ is an $\ell$-group.

\sk

We prove now the lemma by induction on $|S|$. There is a non-trivial normal
$\ell$-subgroup $S_1$ of $S$ for some prime $\ell$. The canonical map
$Y_{\mathbf{G}^{S_1}}(\mathbf{U}^{S_1})\longrightarrow Y_{\mathbf{G}}(\mathbf{U})^{S_1}$
is an isomorphism. By induction, the lemma holds for $\mathbf{G}^{S_1}$ with
the action of $S/S_1$, and we deduce that the lemma holds for
$(\mathbf{G},S)$.
\end{proof}

Let $\Sigma^+$ be a monoid acting by automorphisms on $L$ and acting on the right
by equivalences of the \'etale site on  the Deligne-Lusztig variety $\Y_\G(\U)$. We assume the action is compatible with the action of $L$ and commutes with the action of $G$. We denote by $\Sigma$ the group associated with $\Sigma^+$ and we put $\Upsilon = G \times (L \rtimes \Sigma)^\opp$. 

\sk

Given $H$ a group, we denote by  $\Delta H = \{(x,x^{-1}) \, |\, x \in H\}$ the corresponding diagonal subgroup of $H \times H^\opp$. 

\begin{lem}\label{lem:goveru}Assume there exists a $\Sigma^+$-stable $p'$-subgroup $Z$ of  $L$  such that $\L = C_\G(Z)^\circ$. Then we have

\centers{$ \displaystyle \bigcup_{h \in G} h \big(\Y_\G(\U)^{\Delta Z}\big) \, = \,G / G\cap \U $} 

\noindent where $G$ acts by left multiplication and $L\rtimes \Sigma$ by right multiplication  preceded by a morphism $L\rtimes \Sigma  \longrightarrow N_G(L,G \cap \U)$ that extends the identity on $L$.

\end{lem}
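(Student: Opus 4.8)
The strategy is to identify $\Y_\G(\U)^{\Delta Z}$ explicitly using the fixed-point computation already available, then check that translating by $G$ sweeps out all of $G/(G\cap\U)$. First I would apply Lemma~\ref{lem:fixedpts} with $S = Z$ (a $p'$-group, and solvable since it is abelian if we take $Z$ central, or in general we invoke the $\ell$-group plus induction argument of that lemma): since $Z$ is $\Sigma^+$-stable and commutes with $F$, and acts on $\Y_\G(\U)$ diagonally (by $z \cdot y \cdot z^{-1}$, which is exactly the action of $z \in \Aut(\G)$ once we use that the right $L$-action is by right multiplication), the fixed locus $\Y_\G(\U)^{\Delta Z}$ is identified with $Y_{\C_\G(Z)}(\U^Z)$. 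Now $\L = \C_\G(Z)^\circ$ by hypothesis; one must be slightly careful about components, but $\U^Z = \U \cap \C_\G(Z)$ is connected unipotent and in fact $\U^Z = \U \cap \L$, and since $\P \cap F(\P)$-quotient behaves well, $Y_{\C_\G(Z)}(\U^Z) = Y_\L(\U \cap \L)$. For the Levi $\L$ itself, $\U \cap \L$ is trivial (the unipotent radical of $\P$ meets the Levi complement trivially), so $Y_\L(\U \cap \L) = Y_\L(1) = \{g \in \L \mid g^{-1}F(g) \in F(1) = 1\}/(1) = \L^F = L$. Hence $\Y_\G(\U)^{\Delta Z} \iso L$, sitting inside $\Y_\G(\U)$ as the image of $\L^F$ in $\G/(\U \cap F(\U))$.

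Next I would compute the $G$-orbit of this copy of $L$. An element of $\Y_\G(\U)$ is a coset $g(\U\cap F(\U))$ with $g^{-1}F(g) \in F(\U)$; the condition that it be $G$-translatable into the fixed locus means $hg \in \L \cdot(\U \cap F(\U))$ for some $h \in G$, i.e. $g \in G\L(\U\cap F(\U))$. So $\bigcup_{h\in G} h(\Y_\G(\U)^{\Delta Z})$ is the image of $G\L(\U\cap F(\U)) \cap \mathcal{L}_\G^{-1}(F(\U))$ in $\G/(\U\cap F(\U))$. The key point is the Bruhat-type / Lang-theoretic fact that for the trivial-length element (the parabolic $\P$ sits inside $\X_\G(\P)$ "at the identity"), the variety $\X_\G(\P)$ has a point over the identity coset whose $G$-orbit is precisely $G\P/\P$, a single $G$-orbit isomorphic to $G/(G\cap\P)$; lifting to $\Y_\G(\U)$ along $\pi_L$ multiplies the fibre by $L$, giving $G/(G\cap\U)$. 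Concretely: the map $G \to \Y_\G(\U)$, $h \mapsto h \cdot \overline{1}$ (where $\overline 1$ is the class of $1 \in \L^F \subset \Y_\G(\U)$) has image $G/(G \cap \U)$ because $\Stab_G(\overline 1) = G \cap (\U\cap F(\U)) = G \cap \U$ (using that $\U \cap F(\U)$ and $\U$ have the same $F$-fixed points, as $\U/(\U\cap F(\U))$ is connected so $\U^F = (\U\cap F(\U))^F$ when $\U$ is $F$-stable — here one uses $F(\U)$, but the same Lang-map argument applies). Then I identify the right action: the morphism $L \rtimes \Sigma \to N_G(L, G\cap\U)$ extending the identity on $L$ exists because $\Sigma^+$ preserves $Z$, hence preserves $\L = \C_\G(Z)^\circ$ and its $F$-fixed points $L$, and — after the identification of the fixed locus with $L$ acting on itself by right translation — the $\Sigma$-action on $\Y_\G(\U)^{\Delta Z}$ must land in the automorphisms of $G/(G\cap\U)$ normalising this orbit structure, i.e. in $N_G(L, G\cap\U)$.

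The routine verifications are: (i) that the diagonal $Z$-action on $\Y_\G(\U)$ really is the restriction of the $Z \subset \Aut(\G)$-action in the sense needed by Lemma~\ref{lem:fixedpts}, which is just unwinding that right multiplication by $z^{-1}$ composed with left conjugation is conjugation; (ii) the identities $\U^Z = \U\cap\L$, $\U\cap\L = 1$, and $(\U\cap F(\U))^F = \U^F$ (equivalently for $F(\U)$), all standard consequences of connectedness and the Lang–Steinberg theorem; (iii) the single-orbit statement for $\X_\G(\P)$, which is the parabolic analogue of the classical fact that $\X_\G(\P)$ contains $G/(G\cap\P)$ as the closed stratum indexed by the identity.

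\textbf{Main obstacle.} The delicate step is point (iii) together with keeping track of components: asserting that the $G$-translates of the single fixed point $\overline 1$ fill out \emph{exactly} $G/(G\cap\U)$ — no more, no less — requires knowing that $G\L(\U\cap F(\U)) \cap \mathcal L_\G^{-1}(F(\U)) = G(\U\cap F(\U))$ inside $\G$, i.e. that $g^{-1}F(g)\in F(\U)$ and $g \in G\L(\U\cap F(\U))$ force $g \in G(\U\cap F(\U))$. This is where one must genuinely use that $\L = \C_\G(Z)^\circ$ is a Levi (so that $\P = \L\U$ gives a clean factorisation) and apply Lang's theorem to the group $\L$: writing $g = h\ell u$ with $h\in G$, $\ell\in\L$, $u \in \U\cap F(\U)$, the condition pushes $\ell^{-1}F(\ell)$ into $F(\U)\cap\L = 1$, so $\ell \in \L^F = L \subset G$, collapsing the $\L$-factor into $G$. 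I expect this Lang-theoretic bookkeeping — and the parallel check that the right action factors through $N_G(L, G\cap\U)$ with the stated properties — to be the only substantive content; everything else is assembling Lemma~\ref{lem:fixedpts} with standard facts about parabolic Deligne–Lusztig varieties.
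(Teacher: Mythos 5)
Your first half follows the paper's route (apply Lemma \ref{lem:fixedpts} to the diagonal copy of $Z$, identify the fixed locus, sweep by $G$), but with two slips. You assert that $\U^{\Delta Z}=\U\cap C_\G(Z)$ is connected and hence equals $\U\cap\L=\{1\}$; connectedness of the fixed points of a finite $p'$-group on a connected unipotent group is true but not free, and the paper sidesteps it: the identity component of $Q=\U\cap C_\G(Z)$ lies in $C_\G(Z)^\circ\cap\U=\L\cap\U=\{1\}$, so $Q$ is finite, hence centralised by the connected group $\L$ that normalises it, hence $Q\subset\U\cap C_\G(\L)=\{1\}$. Also, Lemma \ref{lem:fixedpts} identifies the fixed locus with the image of $C_G(Z)=C_\G(Z)^F$, not of $L=(C_\G(Z)^\circ)^F$ (these can differ since $C_\G(Z)$ need not be connected); this is harmless for the union, since $C_G(Z)\subset G$ and the union of the $G$-translates of the image of any subset of $G$ containing $1$ is tautologically the image of $G$, namely $G/(G\cap\U)$. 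Your ``main obstacle'' about $G\L(\U\cap F(\U))\cap\mathcal{L}_\G^{-1}(F(\U))$ is an artefact of replacing $L$ by $\L$ at this point: no Lang-theoretic bookkeeping is needed for the set-theoretic identity.

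The genuine gap is the second assertion of the lemma. $G$-equivariance of the $\Upsilon^+$-action on $G/(G\cap\U)$ only yields a homomorphism $L\rtimes\Sigma\to N_G(G\cap\U)/(G\cap\U)$; the substantive claim is that the image of $\sigma\in\Sigma$ lies in $N_G(L,G\cap\U)(G\cap\U)/(G\cap\U)$, i.e. admits a representative normalising $L$. You dispose of this with ``must land in the automorphisms normalising this orbit structure'', which is not an argument: a priori the representative $y$ only satisfies $yly^{-1}\in\sigma(l)(G\cap\U)$ for $l\in L$, which does not by itself put $y$ in $N_G(L)(G\cap\U)$. The paper proves it by a second, twisted application of Lemma \ref{lem:fixedpts}: the relation above shows that $y(\U\cap F(\U))$ is a fixed point of $Q=\{(\sigma(l),l^{-1})\mid l\in Z\}$, so $y=xu$ with $x\in\G^Q$ (hence $x$ acts on $Z$ as $\sigma$ does) and $u\in\U\cap F(\U)$; then $F(y)=y$ forces $x^{-1}F(x)\in F(\U)\cap N_\G(Z)\subset F(\U)\cap N_\G(\L)=\{1\}$, so $x\in N_G(L)$ and $u\in G\cap\U$. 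This step is the bulk of the paper's proof and is missing from your proposal.
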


\begin{proof} By assumption on $Z$, the closed subvariety $R = \bigcup_{h \in G} h \big(\Y_\G(\U)^{\Delta Z}\big)$ of $\Y_\G(\U)$ is stable by the action of  $\Upsilon^+$. Let $Q = \U^{\Delta Z} = \U \cap C_\G(Z)$. We have $\L \subset N_\G(Q)$. Since $\U \cap \L = \{1\}$ it follows that $Q$ is finite hence $\L \subset C_\G(Q)$. Since $\U \cap C_\G(\L) = \U\cap C_\P(\L) = \{1\}$ we deduce that $Q =\{1\}$. Now by Lemma \ref{lem:fixedpts} the variety $\Y_\G(\U)^{\Delta Z}$ is the image of $\mathcal{L}_{C_\G(Z)}^{-1}(F(Q)) = C_G(Z)$ by the projection $\G \longrightarrow \G / (\U\cap F(\U)$ and therefore we obtain

\centers{$ R \, = \, 
G \, (\U \cap F(\U)) / (\U \cap F(\U)) \, \simeq \, G/G\cap \U.$}

\noindent In particular the action of $L \rtimes \Sigma^+$ on $R$ induces a $G$-equivariant action on $G / G \cap \U$. 

\sk

Given $H$ a subgroup of $G$, there is a group isomorphism $N_G(H)/H \iso  \mathrm{End}_G(G/H)$ constructed as follows: an element $x H \in N_G(H)/H$ defines a $G$-equivariant map $yH \longmapsto yxH$. Conversely, the image of $H$ by a $G$-equivariant map of $G/H$ is in  $N_G(H)/H$.  Consequently the action of $\Upsilon^+$ on $R$ yields a canonical group homomorphism $L \rtimes \Sigma \longrightarrow N_G(G \cap \U)/G\cap \U$.

\sk

Let $\sigma \in \Sigma$ and $y (G\cap \U)$ be the image of $\sigma$ by  this morphism. Let $Q =$ \linebreak $ \{ (\sigma(l),l^{-1}) \, | \, l \in Z\}$. We claim that  $y (\U \cap F(\U)) \in \Y_\G(\U)^Q$. Indeed, $y^{-1} F(y) = 1$ hence $y (\U \cap F(\U)) \in  \Y_\G(\U)$. Furthermore, by definition of $y$, we have $yly^{-1} \in \sigma(l) (G\cap \U)$ and therefore $\sigma(l) y l^{-1}  \in y (G \cap \U)$ for all $l \in L$. We deduce from Lemma \ref{lem:fixedpts} that $y$ is the image of an element of $\mathcal{L}_{\G^Q}^{-1} (F(\U^Q))$, hence there exists $x \in \G^Q$ and $u \in \U \cap F(\U)$ such that $y = xu$. By definition an element $x\in \G^Q$ acts on $Z $ as $\sigma$. Consequently, $
x^{-1} F(x)$ acts by $\sigma^{-1} F \sigma$, and in particular it normalises $Z$. Now $F(y) = y$ and $u,F(u) \in F(\U)$ forces $x^{-1} F(x) \in F(\U)$. Since $N_G(Z) \subset N_G(C_\G(Z)^\circ) = N_G(\L)$ we have $F(\U) \cap N_G(Z) = \{1\}$ and we deduce that $x \in N_G(L)$ and $u \in G\cap \U$. This proves that the image of $L \rtimes \Sigma \longrightarrow N_G(G \cap \U)/G\cap \U$ lies in  $N_G(L,G \cap \U) (G\cap \U) /G\cap \U$, which is canonically isomorphic to $N_G(L,G\cap \U)$ since $\U \cap N_G(L) = \U \cap N_G(\L) = \{1\}$. 
\end{proof}

\begin{rmk} 
\begin{itemize}
\item[$\mathrm{(i)}$] There is an obstruction for equivalences on the \'etale site of $\Y_\G(\U)$ to exist: if $\sigma \in \Sigma$ acts on $L$ by conjugation by $\dot v \in N_G(L)$ then $\dot v$ will necessarily normalise $G\cap \U$. This extends the case of $F$-stable unipotent radical $\U$, for which $\Y_\G(\U) \simeq G/U$ and $N_G(L,U) = L$.

\item[$\mathrm{(ii)}$] When $G\cap \U$ is trivial, this gives no obstruction for an element of the complex reflection group $N_G(L)/L$ lifts to an equivalence on the \'etale site of $\Y_\G(\U)$. Such equivalences have already been constructed in \cite{BMi2,DM3} when $\U$ is associated with a \emph{minimal $\zeta$-element}. Note that $G\cap \U = \{1\}$ for a larger class of elements.

\item[$\mathrm{(iii)}$] The following lemma shows that one can always find a $Z$ satisfying the assumptions in Lemma \ref{lem:zexists} providing that $q$ is not too small. In the situaation of the next section, $Z$ will be a cyclic Sylow $\ell$-subgroup of $G$.

\end{itemize}
\end{rmk}

The following lemma is a variation on a classical result (cf. \cite[Lemma 13.17]{CaEn}).

\begin{lem}\label{lem:zexists}
Assume $\G$ is connected. Let $\mathbf{S}$ be an $F$-stable torus of $\mathbf{G}$ and $E$ a set of good prime numbers
for $\mathbf{G}$, distinct from $p$, and prime to
$|(Z(\mathbf{G})/Z(\mathbf{G})^\circ)^F|$. Let $Z$ be the Hall $E$-subgroup of
$S$.

If for every irreducible factor $\Phi$ of the polynomial order of
$\mathbf{S}$ there is $\ell\in E$ such that $\ell \mid \Phi(q)$, then
$C_{\mathbf{G}}(Z)^\circ=C_{\mathbf{G}}(\mathbf{S})$, and this
is a Levi subgroup of $\mathbf{G}$.
\end{lem}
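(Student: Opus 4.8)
The plan is to reduce to the case where $Z$ is a single prime-power subgroup and then to the classical result \cite[Lemma 13.17]{CaEn}, which handles one prime at a time. First I would recall that the polynomial order of the $F$-stable torus $\mathbf{S}$ factors as $\prod_\Phi \Phi(q)^{a_\Phi}$ over cyclotomic polynomials $\Phi$, and that $S=\mathbf{S}^F$ has order $\prod_\Phi \Phi(q)^{a_\Phi}$; the Sylow structure of $S$ is well understood since $\mathbf{S}$ is a torus (each Sylow $\ell$-subgroup of $S$ is the $\ell$-part of the corresponding product, and these commute). So $Z=\prod_{\ell\in E} Z_\ell$ where $Z_\ell$ is the Sylow $\ell$-subgroup of $S$, and since the $Z_\ell$ commute we have $C_{\mathbf G}(Z)=\bigcap_{\ell\in E}C_{\mathbf G}(Z_\ell)$.

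Next I would show $C_{\mathbf G}(Z)^\circ = C_{\mathbf G}(\mathbf S)$. The inclusion $C_{\mathbf G}(\mathbf S)\subseteq C_{\mathbf G}(Z)^\circ$ is clear since $C_{\mathbf G}(\mathbf S)$ is connected (it is a Levi subgroup, as the centraliser of a torus in a connected group) and contains $Z$. For the reverse inclusion, it suffices to show $C_{\mathbf G}(Z_\ell)^\circ = C_{\mathbf G}(\mathbf S_\ell')$ for an appropriate subtorus, or more directly to apply \cite[Lemma 13.17]{CaEn} to each prime $\ell\in E$: the hypothesis that for every irreducible factor $\Phi$ of the polynomial order of $\mathbf S$ there is some $\ell\in E$ with $\ell\mid\Phi(q)$ guarantees that, taking such an $\ell$, the $\ell$-part $Z_\ell$ is "large enough" in the $\Phi$-part of $S$ to force its connected centraliser to coincide with the centraliser of the full $\Phi$-part of $\mathbf S$. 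Intersecting over all $\Phi$ (equivalently, combining the conditions for the various $\ell\in E$) yields $C_{\mathbf G}(Z)^\circ=C_{\mathbf G}(\mathbf S)$. The hypotheses that the primes in $E$ are good for $\mathbf G$, distinct from $p$, and prime to $|(Z(\mathbf G)/Z(\mathbf G)^\circ)^F|$ are exactly what is needed to invoke \cite[Lemma 13.17]{CaEn} (they ensure $C_{\mathbf G}(Z_\ell)$ is connected, or that one controls the component group, and that Sylow $\ell$-subgroups of tori behave as in the simply connected case).

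Finally, $C_{\mathbf G}(\mathbf S)$ is a Levi subgroup of $\mathbf G$ because the centraliser of any torus in a connected reductive group is a Levi subgroup of a parabolic (standard; e.g.\ it is the Levi complement of $P=C_{\mathbf G}(\mathbf S)\mathbf{U}$ for a suitable unipotent radical). The main obstacle I expect is the bookkeeping around the component group $Z(\mathbf G)/Z(\mathbf G)^\circ$: one must check that the coprimality hypothesis really does let us pass freely between "$C_{\mathbf G}(Z_\ell)$" and "$C_{\mathbf G}(Z_\ell)^\circ$" and ultimately replace $\bigcap_\ell C_{\mathbf G}(Z_\ell)^\circ$ by $C_{\mathbf G}(\mathbf S)$ without losing a finite-index subgroup along the way; this is precisely the point where goodness of the primes and the hypothesis on $|(Z(\mathbf G)/Z(\mathbf G)^\circ)^F|$ enter, and where \cite[Lemma 13.17]{CaEn} does the real work.
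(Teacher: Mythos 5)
Your reduction to one prime (and one irreducible factor) at a time is a genuinely different route from the paper's, and the skeleton can be made to work, but the step that carries all the weight is only gestured at: you invoke ``\cite[Lemma 13.17]{CaEn} for each $\ell\in E$'' without saying how. The single-prime classical statement requires $\ell$ to divide $\Phi(q)$ for \emph{every} irreducible factor $\Phi$ of the polynomial order of $\mathbf{S}$ --- exactly the hypothesis that fails here, and the reason the present lemma is a nontrivial variation. To firm this up you should (a) use the decomposition $\mathbf{S}=\prod_\Phi\mathbf{S}_\Phi$ of an $F$-stable torus into its $\Phi$-subtori, so that $C_\G(\mathbf{S})=\bigcap_\Phi C_\G(\mathbf{S}_\Phi)$; (b) for each $\Phi$ choose $\ell\in E$ with $\ell\mid\Phi(q)$ and apply the single-prime result to the torus $\mathbf{S}_\Phi$ and the Sylow $\ell$-subgroup $Z_{\ell,\Phi}$ of $\mathbf{S}_\Phi^F$ (for which the ``every irreducible factor'' hypothesis does hold), getting $C_\G(Z_{\ell,\Phi})^\circ=C_\G(\mathbf{S}_\Phi)$; (c) note $Z_{\ell,\Phi}\subset Z$, hence $C_\G(Z)^\circ\subset C_\G(Z_{\ell,\Phi})^\circ$, and intersect. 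Only this easy containment of connected centralisers is needed, so the bookkeeping issue you flag about possibly losing a finite-index subgroup in $\bigcap_\ell C_\G(Z_\ell)^\circ$ never actually arises. (Also, the inclusion $C_\G(\mathbf{S})\subset C_\G(Z)^\circ$ holds because $C_\G(\mathbf{S})$ is connected and \emph{centralises} $Z\subset\mathbf{S}$, not because it ``contains $Z$''.)

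The paper argues quite differently and avoids any decomposition by primes or $\Phi$-parts. It sets $\mathbf{M}=C_\G(Z)^\circ$, a Levi subgroup by \cite[Proposition 13.16.(ii)]{CaEn}; the coprimality of $|Z|$ with $|(Z(\mathbf{M})/Z(\mathbf{M})^\circ)^F|$ (via \cite[Proposition 13.12.(iv)]{CaEn}) gives $Z\subset Z(\mathbf{M})^\circ$; then, in the quotient $\pi:\mathbf{M}\to\mathbf{M}/Z(\mathbf{M})^\circ$, part (i) of \cite[Lemma 13.17]{CaEn} shows that $\pi(\mathbf{S})^F$ has order prime to $|Z|$, whereas the hypothesis on the irreducible factors forces any nontrivial $\pi(\mathbf{S})$ to have order divisible by some $\ell\in E$ dividing $|Z|$; hence $\pi(\mathbf{S})=1$, so $\mathbf{S}\subset Z(\mathbf{M})^\circ$ and $\mathbf{M}\subset C_\G(\mathbf{S})$. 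That global argument is shorter and uses only the index statement 13.17(i), while your reduction needs the full single-prime version of the lemma together with the $\Phi$-decomposition of tori; on the other hand yours is arguably more transparent once those ingredients are in place.
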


\begin{proof}
Note that $C_{\mathbf{G}}(\mathbf{S})$ is a Levi subgroup by \cite[Proposition 1.22]{DM}.
\sk

Let $\mathbf{M}=C_{\mathbf{G}}(Z)^\circ$. This
is a Levi subgroup of $\mathbf{G}$ (cf. \cite[Proposition 13.16.(ii)]{CaEn}) and
$|Z|$ is prime to $|(Z(\mathbf{M})/Z(\mathbf{M})^\circ)^F|$
(cf. \cite[Proposition 13.12.(iv)]{CaEn}). Consequently, $Z\subset Z(\mathbf{M})^\circ$.
Let $\pi:\mathbf{M}\to\mathbf{M}/Z(\mathbf{M})^\circ$ be the quotient map.
By \cite[Lemma 13.17.(i)]{CaEn}, the order of $Z$ is prime to
$[\pi(\mathbf{S})^F:\pi(S)]$, hence $\pi(\mathbf{S})^F$ has order prime to $|Z|$.
On the other hand, the polynomial order of $\pi(\mathbf{S})$ divides that of
$\mathbf{S}$, hence $\pi(\mathbf{S})=1$, so $\mathbf{S}\subset Z(\mathbf{M})^\circ$
and we are done.
\end{proof}

\mk

\noindent \textbf{\thesubsection.2 Stable category and $\ell$-ramification.} Let us consider the closed $\Upsilon$-subvariety $\Y_\G(\U)_\ell$ of $\Y_\G(\mathbf{U})$ defined by 

\centers{$ \Y_\G(\U)_\ell \, = \, \big\{y \in \Y_\G(\mathbf{U}) \ \big| \ \ell \text{ divides } |\mathrm{Stab}_{G\times L^\opp}(y)|\big\}.$}

\noindent By construction, the stabilisers in $G \times L^\opp$ of points in $\Y_\G(\mathbf{U}) \smallsetminus \Y_\G(\U)_\ell$  are $\ell'$-groups and $\Y_\G(\U)_\ell$ is the smallest variety such that this property holds.

\begin{lem}\label{lem:yelldecomposition}The variety $ \Y_\G(\U)_\ell$ decomposes as 

\centers{$ \Y_\G(\U)_\ell \, = \, \displaystyle \bigcup_{\begin{subarray}{c} s \in L_\ell \setminus \{1\}\\ h \in G \end{subarray}} \Y_\G(\U)^{(h^{-1} s^{-1} h, \,s)} \, = \, \bigcup_{\begin{subarray}{c} s \in L_\ell \setminus \{1\} \\ h \in G \end{subarray}} h \big(\Y_\G(\U)\big)^{(s^{-1},\, s)} .$}

\end{lem}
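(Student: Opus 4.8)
The plan is to unwind the definition of $\Y_\G(\U)_\ell$ in terms of stabilisers and then reduce to $\ell$-elements and use conjugation in $G$. First I would observe that a point $y \in \Y_\G(\U)$ lies in $\Y_\G(\U)_\ell$ exactly when $\mathrm{Stab}_{G\times L^\opp}(y)$ has order divisible by $\ell$, which by Cauchy's theorem is equivalent to the existence of a nontrivial $\ell$-element $(g,s^{-1})$ (with $g\in G$, $s \in L$, and $s\neq 1$ since $G\cap\text{(the relevant factor)}$ contributes only $\ell'$-parts — more precisely since $G$ acts freely up to $\ell'$-stabilisers, one checks the $L^\opp$-component $s$ must be a nontrivial $\ell$-element of $L$, i.e. $s\in L_\ell\setminus\{1\}$) fixing $y$. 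Hence $y\in \Y_\G(\U)^{(g,s)}$ for some such pair. Conversely any point of such a fixed-point subvariety has stabiliser of order divisible by $\ell$. This already gives $\Y_\G(\U)_\ell = \bigcup_{s\in L_\ell\setminus\{1\},\ g\in G}\Y_\G(\U)^{(g,s)}$, where the union is over all $g$ with $(g,s)$ having $\ell$-power order — but in fact one may drop that constraint: if $(g,s)$ fixes $y$ and $s$ is an $\ell$-element, replacing $(g,s)$ by a suitable power gives an $\ell$-element fixing $y$, so allowing arbitrary $g\in G$ only enlarges each $\Y_\G(\U)^{(g,s)}$ harmlessly or else detects the same locus.

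Next I would identify which pairs $(g,s)$ actually occur. Since $G$ acts \emph{freely} on $\Y_\G(\U)$ by left multiplication (the map $g\mapsto g^{-1}F(g)$ shows stabilisers in $G$ alone are trivial), a pair $(g,s)$ fixes a point $y$ only if $g$ is determined by $s$ and $y$: writing $y$ as a coset, $g\cdot y\cdot s = y$ forces $g$ to lie in a single coset, and one computes that $g$ must be of the form $h^{-1}s'h$ for the appropriate conjugate — concretely, $\Y_\G(\U)^{(g,s)}\neq\emptyset$ forces $g$ to be $G$-conjugate to an element whose semisimple part matches $s$ under the embedding of $L$. This is where I would invoke the structure already used in Lemma \ref{lem:goveru}: the relevant fixed loci are governed by $C_\G$ of the subgroup generated by $s$, and Lemma \ref{lem:fixedpts} pins down $\Y_\G(\U)^{(s^{-1},s)}$ as coming from $C_\G(\langle s\rangle)$. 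Thus every nonempty $\Y_\G(\U)^{(g,s)}$ is of the form $\Y_\G(\U)^{(h^{-1}s^{-1}h,s)}$ for some $h\in G$, giving the first equality.

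For the second equality, I would use $G$-equivariance: left translation by $h$ carries $\Y_\G(\U)^{(s^{-1},s)}$ isomorphically onto $\Y_\G(\U)^{(h s^{-1} h^{-1},s)}$, since $y\in \Y_\G(\U)^{(s^{-1},s)}$ iff $s^{-1}ys=y$ iff $(hs^{-1}h^{-1})(hy)s = hy$. Reindexing $h\mapsto h^{-1}$ turns the middle union $\bigcup_{s,h}\Y_\G(\U)^{(h^{-1}s^{-1}h,s)}$ into $\bigcup_{s,h}h\big(\Y_\G(\U)^{(s^{-1},s)}\big)$, which is the right-hand side. The main obstacle I anticipate is the bookkeeping in the middle step: showing cleanly that the $G$-component of any stabilising pair is forced into the conjugacy-class form $h^{-1}s^{-1}h$ rather than something more general — this requires carefully combining the freeness of the $G$-action with the description of $\Y_\G(\U)^{\Delta Z}$-type fixed points via $C_\G$ and Lemma \ref{lem:fixedpts}, exactly as in the proof of Lemma \ref{lem:goveru}. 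Once that normal form is established, both equalities are formal consequences of $G$-equivariance and the reduction to $\ell$-elements.
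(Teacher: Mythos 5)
Your outer scaffolding is correct: the easy inclusion, the reduction via Cauchy's theorem to a nontrivial $\ell$-element of the stabiliser, the observation that its $L^\opp$-component $s$ must be nontrivial (because the stabiliser of a point in $G\times 1$ lies in $G\cap y(\U\cap F(\U))y^{-1}$, hence is a $p$-group -- note that this group need not be trivial, so your repeated claim that $G$ acts \emph{freely} is false, though the $\ell'$-consequence you need is true), and the translation argument $\Y_\G(\U)^{(h^{-1}s^{-1}h,\,s)}=h^{-1}\bigl(\Y_\G(\U)^{(s^{-1},\,s)}\bigr)$ for the second equality. But the entire content of the lemma is the step you defer as ``bookkeeping'': that a point $y$ fixed by an arbitrary $\ell$-element $(g,s)$ is already fixed by one of the special form $(h^{-1}s^{-1}h,s)$ with $h\in G$. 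Your plan to extract this from Lemmas \ref{lem:fixedpts} and \ref{lem:goveru} does not work as stated: Lemma \ref{lem:fixedpts} concerns finite groups acting by automorphisms of $\G$ that stabilise $\U$ and commute with $F$, and the map $y\mapsto gys$ is of that form only when $g$ is \emph{already} a $G$-conjugate of $s^{-1}$ -- so invoking it here begs the question. Moreover, even after observing that $g=y(vs^{-1})y^{-1}$ must be $\G$-conjugate to $s^{-1}$ (both are semisimple with the same image in $\P/\U=\L$), you still need the conjugating element to be chosen \emph{rational} and compatible with the point $y$, which is not a formal consequence of conjugacy.

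What actually closes the argument is a short Lang-map computation that your proposal never produces. Writing $gys=yv$ with $v\in\U\cap F(\U)$ gives $y^{-1}gy=vs^{-1}$; applying $F$ and using $F(g)=g$, $F(s)=s$ and $F(y)=yu$ with $u=y^{-1}F(y)\in F(\U)$ yields $u^{-1}vs^{-1}u=F(v)s^{-1}$, whence $u$ centralises $s$ and so $u\in F(\U)\cap C_\G(s)^\circ$ by \cite[Lemma 2.5]{DM}. Lang's theorem applied \emph{inside the connected group} $C_\G(s)^\circ$ then produces $x\in C_\G(s)^\circ$ with $x^{-1}F(x)=u=y^{-1}F(y)$, so that $h=xy^{-1}$ lies in $G$ and $(h^{-1}s^{-1}h)\,y\,s=y$. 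This simultaneously rationalises the conjugator and shows the same point $y$ is fixed, which is exactly the content your sketch is missing. Until some version of this computation (or an equivalent substitute) is supplied, the first equality of the lemma is not proved.
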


\begin{proof}Let $y(\U \cap F(\U)) \in \Y_\G(\U)$ and $(g,s) \in G\times L$ be an $\ell$-element fixing $y$.  Then $g y s = yv$ for some $v \in \U \cap F(\U)$, which we can write $y^{-1}gy=vs^{-1}$. Consequently $u^{-1} v s^{-1} u = u^{-1} y^{-1} g y u = F(y^{-1} gy) = F(v)s^{-1}$. Since $v,F(v)\in F(\U)$ and $s^{-1}\in \mathbf{L} = F(\mathbf{L})$, we deduce that
$u\in C_\G(s)$. From \cite[Lemma 2.5]{DM} we deduce that $u\in F(\U)\cap C_\G(s)^\circ$. By Lang's theorem, there exists $x \in C_\G(s)^\circ$ such that $x^{-1} F(x) = u = y^{-1} F(y)$. 
With $h = x^{-1}y \in G$ we obtain $h^{-1}s^{-1}h y s = yx^{-1} s^{-1} x s = y$. \end{proof}

Given $A$ a self-injective algebra, we denote by $A\mstab$ the stable
category of $A$: it is the additive quotient $A\mstab=A\mMod/A\mproj$. The canonical
map $A\mstab\longrightarrow D^b(A\mMod)/A\mperf$, where the right-hand term is the quotient as
triangulated categories, is an equivalence of categories (Keller-Vossieck, Rickard).
This provides $A\mstab$ with a structure of  triangulated category with translation functor $\Omega^{-1}$.

\sk

From now on we assume  that $\Sigma^+$ is cyclic, generated by $\sigma$. Then the group $\Upsilon = G \times (L \rtimes \Sigma)^\opp$ is good and we have a complex $\tRgc(\Y_\G(\mathbf{U})) \in \Ho^b(k\Upsilon, k(G\times L^\opp))$ whose terms are finitely generated $\ell$-permutation $k(G\times L^\opp)$-modules  (cf. \S \ref{sect:generalvar}). 

\sk

Given $\lambda \in k^\times$ and given $M$ a finite-dimensional 
right $k\Sigma$-module, we will denote by $M_\lambda$  the generalised $\lambda$-eigenspace
of $\sigma$ (this is the image of $M\otimes_{k\Sigma}
\widehat{\vphantom{a^b}k[\sigma]}_{(\sigma-\lambda)}$ in $M$). We put
${_\lambda M}=M_{\lambda^{-1}}$, the eigenspace of $\sigma$ acting on $M$ on the left
by $\sigma^{-1}$.

\begin{lem}\label{lem:eigenspace}Given $\lambda \in k^\times$  we have an isomorphism

\centers{$ \tRgc(\X_{\G}(\P),k)_\lambda \iso  \tRgc(\Y_\G(\U)_\ell/L,k)_\lambda$}

\noindent in $kG\mstab$.
\end{lem}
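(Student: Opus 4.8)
The strategy is to pass from $\X_\G(\P)$ to $\Y_\G(\U)/L$ and then to cut down to the $\ell$-ramified locus, tracking the effect on the generalised $\lambda$-eigenspace of $\sigma$ throughout. First I would use the $G$-equivariant isomorphism $\Y_\G(\U)/L \iso \X_\G(\P)$ recalled in \S\ref{sect:dlvar}; since $\sigma$ acts compatibly with $L$ on $\Y_\G(\U)$, this identifies $\tRgc(\X_\G(\P),k)$ with $\tRgc(\Y_\G(\U)/L,k)$ in $\Ho^b(k(G\rtimes\Sigma^\opp))$ (or rather with the image of $\tRgc(\Y_\G(\U),k)$ under the $L$-coinvariants functor, which is exact on $\ell$-permutation modules since $L$ acts freely on the quotient presentation — here one uses that $\Sigma^+$ is cyclic so $\Upsilon$ is good and Proposition \ref{prop:finiteness} provides the finitely generated $\ell$-permutation representative). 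Taking the generalised $\lambda$-eigenspace of $\sigma$ is an exact functor on finite-dimensional $k\Sigma$-modules (it is a direct summand functor, as $\widehat{\vphantom{a^b}k[\sigma]}_{(\sigma-\lambda)}$ is a direct factor of the completed group algebra), so it commutes with taking cohomology and with the $L$-coinvariants, and we get $\tRgc(\X_\G(\P),k)_\lambda \iso \tRgc(\Y_\G(\U)/L,k)_\lambda$ as complexes of $kG$-modules, hence a fortiori in $kG\mstab$.

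The substance of the lemma is then the claim that the closed immersion $\Y_\G(\U)_\ell \hookrightarrow \Y_\G(\U)$ induces, after applying $(-)/L$ and then $(-)_\lambda$, an isomorphism in the stable category $kG\mstab$. For this I would invoke the open-closed triangle in $\Ho^b(k\Upsilon,k(G\times L^\opp))$ associated with the decomposition $\Y_\G(\U) = (\Y_\G(\U)\smallsetminus\Y_\G(\U)_\ell) \amalg \Y_\G(\U)_\ell$, namely
\[
\gRgc(\Y_\G(\U)\smallsetminus\Y_\G(\U)_\ell)\longrightarrow \gRgc(\Y_\G(\U))\longrightarrow \gRgc(\Y_\G(\U)_\ell)\rightsquigarrow,
\]
apply the exact functors $(-)/L$ (i.e. $L$-coinvariants on $\ell$-permutation complexes) and generalised $\lambda$-eigenspace of $\sigma$, and observe that by construction $G\times L^\opp$ acts on $\Y_\G(\U)\smallsetminus\Y_\G(\U)_\ell$ with $\ell'$-stabilisers. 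By Lemma \ref{le:freeaction}, $\Res_{G\times L^\opp}\gRgc(\Y_\G(\U)\smallsetminus\Y_\G(\U)_\ell)$ is a bounded complex of projective $k(G\times L^\opp)$-modules; hence its image under $(-)/L$ is a bounded complex of projective $kG$-modules (coinvariants of a projective $k(G\times L^\opp)$-module along the free $L^\opp$-action is projective over $kG$), and so is each of its $\sigma$-eigenspace summands. Therefore the first term of the triangle becomes zero in $kG\mstab$, and the map $\tRgc(\Y_\G(\U)/L,k)_\lambda \to \tRgc(\Y_\G(\U)_\ell/L,k)_\lambda$ is an isomorphism there; composing with the isomorphism of the previous paragraph gives the lemma.

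The step I expect to require the most care is the interplay between the three operations — passing to $\Ho^b$ with $\ell$-permutation representatives, taking $L$-coinvariants to form the quotient by $L$, and extracting the $\lambda$-eigenspace of $\sigma$ — and in particular checking that "$\gRgc$ of the quotient'' really is computed by $L$-coinvariants of the $\ell$-permutation complex in a way compatible with the open-closed triangle and with the $\Sigma$-action. Concretely one must know that $\pi_L$ being (isomorphic to) the quotient map for a \emph{free} $L$-action lets one identify $\gRgc(\Y_\G(\U)/L)$ with $(\tRgc(\Y_\G(\U)))_L$ inside $\Ho^b(kN,kN_H/\cdots)$ — this is exactly the kind of statement underlying Remark \ref{rmk:ramifiedlocus} and the filtration discussion in \S\ref{sect:generalvar} — and that it is functorial for open-closed decompositions and commutes with the $\sigma$-eigenspace projector, which is harmless because that projector is a $k$-linear idempotent functor commuting with everything $kG$-linear. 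Once these compatibilities are in place the rest is the formal two-step argument above; there is no delicate geometry beyond Lemma \ref{lem:yelldecomposition}, which identifies $\Y_\G(\U)_\ell$ explicitly but is not logically needed for this particular statement (it will be used later to analyse $\tRgc(\Y_\G(\U)_\ell/L,k)_\lambda$ further).
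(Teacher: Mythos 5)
Your argument is correct and is essentially the paper's own proof: the paper also reduces to the observation that the cone of the canonical map $\tRgc(\Y_\G(\U))\to\tRgc(\Y_\G(\U)_\ell)$ (i.e.\ the term $\gRgc(\Y_\G(\U)\smallsetminus\Y_\G(\U)_\ell)$ of your open--closed triangle) is a bounded complex of projective $k(G\times L^\opp)$-modules by Proposition \ref{prop:finiteness}, then applies $-\otimes_{kL}k$ and the generalised $\lambda$-eigenspace projector to conclude that the induced map has projective cone over $kG$. The compatibilities you flag (coinvariants for the free $L$-action, exactness of the eigenspace functor) are exactly the ones the paper uses implicitly, so there is nothing to add.
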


\begin{proof} From Proposition \ref{prop:finiteness} we deduce that  the cone of
the canonical map $f:$ \linebreak $\tRgc(\Y_\G(\mathbf{U}))\longrightarrow \tRgc(\Y_\G(\U)_\ell)$ is homotopy equivalent to a bounded
complex of projective $k(G\times L^\opp)$-modules. As a consequence,
$\cone(f)\otimes_{kL} k$ is homotopy equivalent to a bounded complex of projective
$kG$-modules.   The map $f\otimes_{kL}k$ is a morphism of bounded complexes
of finite-dimensional $k(G\times\Sigma^\opp)$-modules, hence for any $\lambda \in k$ it induces a morphism of complexes of $\ell$-permutation $kG$-modules

\centers{$\tRgc(\X_\G(\mathbf{P}))_\lambda \longrightarrow \tRgc(\Y_\G(\U)_\ell/L)_\lambda $}

\noindent whose cone is homotopy equivalent to a bounded complex of finitely generated projective $kG$-modules.
\end{proof}

\subsection{The cyclic case}

\noindent \textbf{\thesubsection.1  Centralisers of cyclic Sylow $\ell$-subgroups.} We start by describing the centralisers of Sylow $\ell$-subgroups of $G$ under the assumption that they are cyclic.

\begin{lem}\label{lem:cyclic} Assume $\G$ is connected and $G$ has a cyclic Sylow $\ell$-subgroup $S_\ell$. Let $\L = C_\G(S_\ell)^\circ$. Then 
\begin{itemize}

\item[$\mathrm{(i)}$] $\L$ is an $F$-stable Levi subgroup of $\G$ and $S_\ell \subset Z(\L)^\circ$.

\item[$\mathrm{(ii)}$] For any non-trivial element $s \in S_\ell$,  we have $C_\G(s)^\circ  = \mathbf{L}$ and $C_G(s) = L$, hence any two distinct Sylow $\ell$-subgroups of $G$ have trivial intersection.

\item[$\mathrm{(iii)}$] $N_G(S_\ell) = N_G(L) = N_G(\L)$.

\end{itemize}
\end{lem}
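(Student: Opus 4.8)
The plan is to build on the structural results about centralisers of semisimple elements in connected reductive groups together with the assumption that a Sylow $\ell$-subgroup $S_\ell$ is cyclic. For (i), I would argue that since $S_\ell$ is an abelian $\ell$-group with $\ell \neq p$, it lies in an $F$-stable maximal torus $\mathbf{T}$ of $\G$, and that $\mathbf{L} = C_\G(S_\ell)^\circ$ is an $F$-stable Levi subgroup. The fact that $C_\G(Z)^\circ$ is a Levi subgroup for $Z$ a semisimple abelian subgroup of an appropriate order is exactly the content invoked via \cite[Proposition 13.16.(ii)]{CaEn} in the proof of Lemma \ref{lem:zexists}; here one should check that $\ell$ is good for $\G$ and prime to $|(Z(\G)/Z(\G)^\circ)^F|$, which holds because $S_\ell$ is a \emph{Sylow} $\ell$-subgroup so $\ell \nmid |Z(\G)/Z(\G)^\circ|$ under the cyclicity hypothesis (otherwise one would typically get a non-cyclic Sylow from the isogeny quotient). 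Then $S_\ell \subset Z(\mathbf{L})$ by definition of a centraliser, and since $S_\ell$ is an $\ell$-group with $\ell$ good and prime to the component group, $S_\ell \subset Z(\mathbf{L})^\circ$ by the same argument as in Lemma \ref{lem:zexists} (via \cite[Proposition 13.12.(iv)]{CaEn}).

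For (ii), take a non-trivial $s \in S_\ell$. Since $S_\ell$ is cyclic, $\langle s \rangle$ contains the unique subgroup of order $\ell$ of $S_\ell$, so $C_\G(s) \subseteq C_\G(S_\ell^{(\ell)})$ where $S_\ell^{(\ell)}$ is that order-$\ell$ subgroup; conversely $C_\G(S_\ell) \subseteq C_\G(s)$. Applying \cite[Proposition 13.16.(ii)]{CaEn} again to $\langle s \rangle$, we get that $C_\G(s)^\circ$ is a Levi subgroup containing $S_\ell$; but then $C_\G(s)^\circ \supseteq C_\G(S_\ell)^\circ = \mathbf{L}$, and since $\mathbf{L} = C_\G(S_\ell)^\circ \supseteq C_\G(s)^\circ$ follows from $s \in S_\ell$ once one knows $C_\G(s)^\circ$ centralises all of $S_\ell$ — the point being that $S_\ell \subseteq Z(\mathbf{L})^\circ \subseteq C_\G(s)^\circ$ forces $C_\G(s)^\circ$ and $\mathbf{L}$ to be the centraliser of the same torus $Z(\mathbf{L})^\circ$, hence equal. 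For the finite groups, $C_G(s) = C_\G(s)^F$; one needs $C_\G(s) = C_\G(s)^\circ$, i.e. connectedness of the centraliser, which holds because $|\langle s\rangle|$ is prime to $|(Z(\G)/Z(\G)^\circ)^F|$ (Steinberg's connectedness criterion, as packaged in \cite[Proposition 13.16]{CaEn}). Thus $C_G(s) = \mathbf{L}^F = L$. The triviality of intersections is then formal: if $x \in S_\ell \cap S_\ell'$ with $x \neq 1$, then both $S_\ell$ and $S_\ell'$ are Sylow $\ell$-subgroups of $C_G(x) = L$, but $S_\ell \subseteq Z(L)$ is central in $L$ hence is the \emph{unique} Sylow $\ell$-subgroup of $L$, so $S_\ell = S_\ell'$.

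For (iii), the inclusions $N_G(\mathbf{L}) \subseteq N_G(\L)$ — wait, I mean: clearly $N_G(\L) \subseteq N_G(Z(\L)^\circ)$, and conversely since $\L = C_\G(Z(\L)^\circ)$ (a Levi is the centraliser of the connected centre of itself), $N_G(Z(\L)^\circ) \subseteq N_G(\L)$; also $N_G(S_\ell) \subseteq N_G(C_\G(S_\ell)^\circ) = N_G(\L)$. For the reverse inclusion $N_G(\L) \subseteq N_G(S_\ell)$: an element $g \in N_G(\L)$ permutes the Sylow $\ell$-subgroups of $L$, but by (ii) applied inside $L$ (or directly, since $S_\ell \subseteq Z(L)$), $S_\ell$ is the unique Sylow $\ell$-subgroup of $L$, so $g$ normalises it. This closes the cycle of inclusions $N_G(S_\ell) \subseteq N_G(\L) = N_G(\mathbf{L})^F$ and $N_G(\L) \subseteq N_G(S_\ell)$. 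The main obstacle I anticipate is the bookkeeping around connectedness of centralisers and the precise hypotheses needed (goodness of $\ell$, coprimality to the component group of $Z(\G)$) to legitimately invoke the Carter–Enguehard machinery; the cyclicity of $S_\ell$ is what makes these coprimality conditions automatic, and making that implication airtight — that a cyclic Sylow forces $\ell \nmid |(Z(\G)/Z(\G)^\circ)^F|$ and $\ell$ good — is the step requiring the most care, though it is essentially standard (e.g. via the classification of groups with cyclic Sylow subgroups, or by noting $\ell$ divides a single cyclotomic factor of the generic order so bad-prime or torsion-prime obstructions cannot occur).
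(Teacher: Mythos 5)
Your overall strategy --- Cabanes--Enguehard structure theory for centralisers of $\ell$-elements combined with the arithmetic constraints imposed by cyclicity of $S_\ell$ --- is the same as the paper's, and parts (i) and (iii) are essentially sound. But there is a genuine gap at the central step of (ii), namely the inclusion $C_\G(s)^\circ \subseteq \L$. You argue that $S_\ell \subseteq Z(\L)^\circ \subseteq C_\G(s)^\circ$ ``forces $C_\G(s)^\circ$ and $\L$ to be the centraliser of the same torus $Z(\L)^\circ$''. This does not follow: a Levi subgroup \emph{containing} the torus $Z(\L)^\circ$ need not \emph{centralise} it ($\G$ itself contains $Z(\L)^\circ$), so you have only re-derived the easy inclusion $\L \subseteq C_\G(s)^\circ$, not its reverse. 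What is actually needed is $S_\ell \subseteq Z(C_\G(s)^\circ)$ --- the centre of the a priori \emph{larger} Levi --- and this is where the paper does real work: by \cite[Proposition 13.12.(ii)]{CaEn} one has $s \in Z(C_\G(s)^\circ)^\circ$, and the fact that the polynomial order of $G$ has a unique simple factor vanishing mod $\ell$ at $q$, occurring with multiplicity one, gives $\ell \nmid [G : (Z(C_\G(s)^\circ)^\circ)^F]$; hence the unique Sylow $\ell$-subgroup of the abelian group $(Z(C_\G(s)^\circ)^\circ)^F$ is a Sylow $\ell$-subgroup of $G$ lying centrally in $C_G(s)$, and therefore equals $S_\ell$. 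Only then does $C_\G(s)^\circ \subseteq C_\G(S_\ell)^\circ$ follow.

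Relatedly, the arithmetic facts you defer as ``essentially standard'' --- $\ell$ odd and good, $\ell \nmid |Z(\G^*)^F|$, multiplicity one of $\Phi_d$ in the generic order, and $\Phi_{d\ell^r} \nmid \mathbf{O}_{\mathbf{G},F}$ for $r\ge 1$ --- constitute a substantial part of the paper's proof: they are checked from \cite[Theorems 4.10.2, 4.10.3, 4.10.5 and Table 2.2]{GLS}, after a reduction (which your proposal omits) from a general connected reductive $\G$ to a product of simple groups permuted cyclically by $F$; that reduction itself uses these conditions on $\ell$ to force $S_\ell$ into a single quasi-simple component. Since the multiplicity-one statement is exactly what powers the index argument above, these verifications cannot be treated as routine bookkeeping. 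Two minor points: for $C_G(s) = L$ one only needs $(C_\G(s)/C_\G(s)^\circ)^F = 1$, not connectedness of the centraliser (which can fail, cf.\ the remark following the lemma), and the relevant coprimality is to $|Z(\G^*)^F|$ via \cite[Proposition 3.16.(i)]{CaEn}, not to $|(Z(\G)/Z(\G)^\circ)^F|$.
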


\begin{proof} Let us first consider the case where $\mathbf{G}$ is simple. Let $\G_\mathrm{sc}$ be the  universal cover of $\G$. We denote by $\mathbf{O}_{\mathbf{G},F}(x)=x^N\prod_e \Phi_e(x)^{a(e)}$  the "very twisted" polynomial order of  $\mathbf{G}$: we have $|G|=\mathbf{O}_{\mathbf{G},F}(q^\varepsilon)$ where $\varepsilon =2$  if $\mathbf{G}$ has type ${^2B}_2$, ${^2F}_4$ or ${^2G}_2$, and
$\varepsilon = 1$ otherwise. Let $d$ be the order of $q^\varepsilon$ modulo $\ell$. With $S_\ell$ being cyclic, we claim that:

\begin{itemize} 
\item the multiplicity of $\Phi_d$ as a divisor of $\mathbf{O}_{\mathbf{G},F}(x)$ is $1$;
\item $\ell$ is odd;
\item $\ell \nmid |Z(\G_\mathrm{sc})^F|$. In particular, both $Z(\G)^F$ and $Z(\G^*)^F$ are $\ell'$-groups;
\item $\Phi_{d\ell^r} \nmid \mathbf{O}_{\mathbf{G},F}(x)$ for $r\ge 1$;
\item $\ell$ is good for $\mathbf{G}$.
\end{itemize}

\noindent We have $\ell{\not=2}$ by \cite[Theorem 4.10.5(a)]{GLS}. Assume now $\ell$ is odd.
If $\ell$ divides $|Z(\mathbf{G}_\mathrm{sc})^F|$, then
$W^F$ has non-cyclic Sylow $\ell$-subgroups (cf. \cite[Table 2.2]{GLS}),
unless $\mathbf{G}$ has type $A$: in that case, if $\mathbf{T}$ is a quasi-split torus of $\mathbf{G}$, then $N_{G}(\mathbf{T})$ has
non-abelian Sylow $\ell$-subgroups. We deduce that the multiplicity of $\Phi_d$ as a divisor of
$\mathbf{O}_{\mathbf{G},F}(x)$ is $1$ \cite[Theorems 4.10.2 and 4.10.3]{GLS} and
that $G_\mathrm{sc}$ has cyclic Sylow $\ell$-subgroups. The last two properties are easily checked by inspection. Note that conversely, if the multiplicity of $\Phi_d$ as a divisor of $\mathbf{O}_{\mathbf{G},F}(x)$ is $1$, then $G$ has cyclic Sylow $\ell$-subgroups \cite[Theorem 4.10.3]{GLS}. Note also that by descents of scalars, the result remains true if $\G \simeq \G_1 \times \cdots \times \G_r$ is a product of simple groups permuted cyclically by $F$ since in that case $\G^F \simeq \G_1^{F^r}$.

\sk

Now  any connected reductive group $\G$ is a product of its minimal $F$-stable semisimple normal connected subgroups and its connected center. Moreover, the intersection of any two such subgroups is finite and central, and the conditions on $\ell$ given above force $S_\ell$ to lie in only one component (since $Z(\mathbf{H})$ is a quotient of $Z(\mathbf{H}_\mathrm{sc})$ for any semisimple group $\mathbf{H}$). We may therefore assume that $(\G,F)$ is a product of simple groups permuted cyclically by $F$. 

\sk

Let $\L = C_\G(S_\ell)^\circ$ and $s \in S_\ell$. We fix a pair $(\G^*,F^*)$ dual to $(\G,F)$. By \cite[Proposition 3.16.(ii)]{CaEn}, $C_\G(s)^\circ$ is a Levi subgroup, which proves $\mathrm{(i)}$. By  \cite[Proposition 3.16.(i)]{CaEn}, the group $(C_\G(s)/C_\G(s)^\circ)^F$ is trivial since it is both an $\ell$-group and a subquotient of  $Z(\G^*)^F$ which is an $\ell'$-group (it is isomorphic to a quotient of $Z( \G_\mathrm{sc})$). This shows that $(C_\G(s)^\circ)^F = C_G(s)$.  In particular, $C_\G(s)^\circ$ contains $s$. By \cite[Proposition 13.12.(ii)]{CaEn}, its connected center $Z(C_\G(s)^\circ)^\circ$ also contains $s$. The (usual) polynomial order of $(\G,F)$ has a unique simple factor over $\mathbb{Z}[x]$ (or $\mathbb{Z}[\sqrt p][x]$ for Ree and Suzuki groups) that vanishes modulo $\ell$ at $x=q$.  Consequently, $\ell \nmid [G:(Z(C_\G(s)^\circ)^\circ)^F]$, hence $S_\ell \subset Z(C_\G(s)^\circ)^\circ$ and therefore $C_\G(s)^\circ = C_\G(S_\ell)^\circ$.

\sk

The last part of  (ii) follows from the inclusions $N_G(\L) \subset N_G(L) \subset N_G(S_\ell)  \subset N_G(Q) \subset N_G(C_G(Q)^\circ) = N_G(\L)$ given any non-trivial subgroup $Q$ of $S_\ell$.  \end{proof}

\begin{rmk}Note that $C_{\mathbf{G}}(S_\ell)$ is not always connected. For example, take
$\mathbf{G}=\mathrm{PGL}_{\ell}$ and assume $F$ defines a split structure over
$\mathbb{F}_q$. Let $d$ be the order of $q$ in $\mathbb{F}_\ell^\times$. Assume $d>1$
and $\ell^2 \nmid \Phi_d(q)$. Then, a Sylow $\ell$-subgroup
$S_\ell$ of $G$ has order $\ell$ and
$C_{\mathbf{G}}(S_\ell)/C_{\mathbf{G}}(S_\ell)^\circ$ has order $\ell$.
\end{rmk}

Let us assume now that $\G$ is a connected reductive group such that $G$ has a cyclic Sylow $\ell$-subgroup $S_\ell$. We take $\L = C_\G(S_\ell)^\circ$. It is an $F$-stable Levi subgroup of $\G$ (cf. Lemma \ref{lem:cyclic}). Given $s$ a non-trivial element of $S_\ell$, we have $C_G(s) = L$ by Lemma \ref{lem:cyclic}.(ii). We deduce from Lemmas \ref{lem:goveru} and \ref{lem:yelldecomposition} that there exists a group homomorphism $L \rtimes \Sigma \longrightarrow N_G(L,G \cap \U)$ such that
 
 \centers{$\Y_\G(\U)_\ell \simeq \, \mathrm{Res}_{G \times (L \rtimes \Sigma)^\opp}^{G \times   N_G(L,G \cap \U)^\opp} \, G / G\cap \U.$}

Let $N_\Sigma$ be the subgroup of $N_G(L)$ generated by the image of $L \rtimes \Sigma$. Let $e$ be the order of the cyclic group $N_\Sigma /L$. Given $\lambda$ an $e$-th root of unity  in $k^\times$, we denote by $k_\lambda$ the one-dimensional representation of $N_\Sigma$ on which the image of $\sigma$ acts by $\lambda$ and $N_\Sigma$ acts trivially. 

\begin{prop}\label{prop:eigenspace}Assume $\G$ is connected and $G$ has a cyclic Sylow $\ell$-subgroup $S_\ell$. Let $\L = C_\G(S_\ell)^\circ$. Given $\lambda \in k^\times$, we have

\centers{$ \tRgc(\X_\G(\P),k)_\lambda \, \simeq \, \left\{ \hskip-1.3mm \begin{array}{l} \mathrm{Ind}_{N_{\Sigma}}^G k_{\lambda} \ \ \text{if} \ \ \lambda^e =1; \\[4pt] 0 \ \ \text{otherwise} \end{array}\right.$}

\noindent in $kG\mstab$.
\end{prop}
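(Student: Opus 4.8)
The plan is to reduce, via Lemma~\ref{lem:eigenspace}, to a computation with the $0$-dimensional variety $\Y_\G(\U)_\ell/L$. First I would invoke Lemma~\ref{lem:eigenspace}, which identifies $\tRgc(\X_\G(\P),k)_\lambda$ with $\tRgc(\Y_\G(\U)_\ell/L,k)_\lambda$ in $kG\mstab$. Since $S_\ell\subseteq Z(\L)^\circ$ (Lemma~\ref{lem:cyclic}(i)) it is the unique Sylow $\ell$-subgroup of $L$, hence a $\Sigma^+$-stable $p'$-subgroup with $\L=C_\G(S_\ell)^\circ$, so Lemmas~\ref{lem:goveru} and~\ref{lem:yelldecomposition} apply with $Z=S_\ell$ and yield a $G\times(L\rtimes\Sigma)^\opp$-equivariant isomorphism $\Y_\G(\U)_\ell\iso G/(G\cap\U)$, the action of $L\rtimes\Sigma$ factoring through a homomorphism to $N_G(L,G\cap\U)$ extending the identity on $L$. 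The right $L$-action being free, passing to the quotient gives $\Y_\G(\U)_\ell/L\simeq G/P'$ with $P'=L(G\cap\U)$, on which $\sigma$ acts by right translation by $\dot\sigma$, the image of $\sigma$ (it normalises $P'$). As this variety is $0$-dimensional, $\tRgc(\Y_\G(\U)_\ell/L)=k[G/P']$ in degree~$0$, so I must compute the $\lambda$-eigenspace $k[G/P']_\lambda$ of right translation by $\dot\sigma$ as an object of $kG\mstab$.

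Next I would compare $G/P'$ with $G/L$. One checks that $\dot\sigma^m$ acts trivially on $G/P'$ if and only if $\dot\sigma^m\in P'$, if and only if $\dot\sigma^m\in L$ (since $\U\cap N_G(L)=\{1\}$ forces $\langle\dot\sigma\rangle\cap(G\cap\U)=\{1\}$), if and only if $e\mid m$; so this operator has order $e$, and $e$ is prime to $\ell$, as an element of order $\ell$ in $N_G(S_\ell)/C_G(S_\ell)=N_G(L)/L$ would centralise $\Omega_1(S_\ell)$ and hence lie in $C_G(\Omega_1(S_\ell))=L$ by Lemma~\ref{lem:cyclic}(ii). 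Now $|G\cap\U|=[P':L]$ is a power of $p$, hence invertible in $k$, and the $\Sigma$-equivariant maps $k[G/P']\to k[G/L]$, $gP'\mapsto\sum_{hL\subseteq gP'}hL$, and $k[G/L]\to k[G/P']$, $gL\mapsto gP'$, compose (either way) to multiplication by that unit; so $k[G/L]\simeq k[G/P']\oplus N$ as modules with $\sigma$-action, where $N=\mathrm{Ind}_{P'}^G I$ and $I$ is the augmentation ideal of $k[G\cap\U]\cong k[P'/L]$ as a $kP'$-module ($G\cap\U$ acting by translation, $L$ by conjugation). Hence $k[G/L]_\lambda\simeq k[G/P']_\lambda\oplus N_\lambda$. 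On the other hand $k[G/L]=\mathrm{Ind}_L^G k=\mathrm{Ind}_{N_\Sigma}^G k[N_\Sigma/L]$ with the $\sigma$-action induced from right translation by $\dot\sigma\in N_\Sigma$ on $k[N_\Sigma/L]$; as $N_\Sigma/L$ is cyclic of order $e$ prime to $\ell$, generated by $\dot\sigma L$, its $\lambda$-eigenspace is $k_\lambda$ if $\lambda^e=1$ and $0$ otherwise, so $k[G/L]_\lambda\simeq\mathrm{Ind}_{N_\Sigma}^G k_\lambda$, resp.\ $0$.

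It remains to prove that $N=\mathrm{Ind}_{P'}^G I$ is a projective $kG$-module: then $N_\lambda$, a $kG$-direct summand of $N$ (the eigenprojections of the $\ell'$-order operator $\dot\sigma$ are $kG$-linear), is projective, so $k[G/P']_\lambda\simeq k[G/L]_\lambda$ in $kG\mstab$, which gives the proposition. Projectivity is detected on $S_\ell$; by the Mackey formula $\Res_{S_\ell}\mathrm{Ind}_{P'}^G I$ is a sum of modules induced from $\Res_R I$ for $\ell$-subgroups $R$ of $P'$, each of which is $P'$-conjugate into the Sylow $\ell$-subgroup $S_\ell\subseteq L$ of $P'$, so it is enough to see that $\Res_R I$ is free over $kR$ for $R\le S_\ell$. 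There $R$ acts on $G\cap\U$ by conjugation, and for $1\ne r\in R$ we have $C_{G\cap\U}(r)\subseteq\U\cap C_\G(r)=\{1\}$: indeed $C_\G(r)^\circ=\L$ by Lemma~\ref{lem:cyclic}(ii), so $\U\cap C_\G(r)$ is finite and normalised by the connected group $\L$ (which normalises $C_\G(r)$ and $\U$), hence centralised by $\L$ and contained in $Z(\L)\cap\U=\{1\}$. Thus $R$ acts freely on $(G\cap\U)\smallsetminus\{1\}$, so $\Res_R k[G\cap\U]\simeq k\oplus(\text{free }kR\text{-module})$; comparing with $\Res_R k[G\cap\U]=k\oplus\Res_R I$ and cancelling the trivial summand (Krull--Schmidt) shows $\Res_R I$ is free. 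The main obstacle is precisely this treatment of the $p$-group $G\cap\U$: one must keep track of the $\Sigma$-equivariance of all the module decompositions used, so that passing to $\lambda$-eigenspaces is legitimate, and establish the projectivity of $\mathrm{Ind}_{P'}^G I$; the remaining steps are routine manipulations with induction and restriction.
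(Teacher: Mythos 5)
Your argument is correct and follows essentially the same route as the paper: reduce via Lemma \ref{lem:eigenspace} to the permutation module on $G/L(G\cap \U)$, compare it with $k[G/L]$ whose $\lambda$-eigenspace is $\Ind_{N_\Sigma}^G k_\lambda$ (or $0$), and discard a projective complement coming from $G\cap \U$. The only difference is cosmetic: the paper certifies that complement projective by a Mackey-formula computation inside $N_\Sigma\ltimes(G\cap\U)$ using $S_\ell\cap{}^u S_\ell=\{1\}$ for nontrivial $u\in G\cap\U$, whereas you restrict to $S_\ell$ and use the free conjugation action of its nontrivial elements on $(G\cap\U)\smallsetminus\{1\}$ --- the same underlying fact.
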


\begin{proof} By Lemma \ref{lem:eigenspace} and the description of $\Y_\G(\U)_\ell$, we have  $\tRgc(\X_\G(\P),k)_\lambda = 0$ in $kG\mstab$ if $\lambda^e \neq 1$. Otherwise we have

\centers{$ \tRgc(\X_\G(\P),k)_\lambda \, \simeq \, \Ind_{N_\Sigma \ltimes (G\cap \U)}^G \, \Res_{N_\Sigma \ltimes (G\cap \U)}^{N_\Sigma} k_{\lambda} $}

\noindent in $kG\mstab$. Now by Lemma \ref{lem:cyclic}.(iii) we have $N_G(S_\ell) \cap \U = N_G(\L) \cap \U = \{1\}$, hence $S_\ell \cap {}^u(S_\ell) = \{1\}$ for any non-trivial $u \in G \cap \U$ (cf. Lemma \ref{lem:cyclic}.(ii)). It follows from the Mackey formula that $\Ind_{N_\Sigma }^{N_\Sigma \ltimes (G \cap \U)}  k_{\lambda}  \, \simeq \, \Res_{N_\Sigma \ltimes  (G \cap \U) }^{N_\Sigma }  k_{\lambda}$ in $k(N_\Sigma \ltimes (G \cap \U))\mstab$. \end{proof}

\begin{rmk} Note that this result  holds if we replace the condition that $S_\ell$ is a cyclic Sylow $\ell$-subgroup of $G$ by the following: $S_\ell$ is a Sylow $\ell$-subgroup of $L$ and for all non-trivial $\ell$-element $s\in L$ we have $C_\G(s)^\circ = \L$.
\end{rmk}

\mk

\noindent \textbf{\thesubsection.2 Endomorphism associated with $F$.} We can construct a specific endomorphism $\sigma$ of $\Y_\G(\U)$ associated with the Frobenius. There exists $\dot w \in N_\G(\L)$ such that ${}^{\dot w F} (\mathbf{L},\P) = (\mathbf{L},\mathbf{P})$. Let $\delta \geq 1$ be minimal such that $\dot w F$ induces a split structure on $\L$. Let us consider $\dot v = \dot w F(\dot w) \cdots F^{\delta -1}(\dot w)$ and define $\sigma = \dot v F^{\delta} = (\dot w F)^\delta$. We can choose $\dot w$ such that $\dot v$ is fixed by $F$. We let $\sigma$ act  on $\Y_\G(\U)$ by $\sigma(g) = F^\delta(g)\dot v^{-1}$. It is compatible with the action of $G \times L^\opp$, where $\sigma$ acts on $L$ by conjugation by $\dot v^{-1}$.

\begin{cor}\label{corsyz} Assume there is a cyclic Sylow $\ell$-subgroup $S_\ell$ of $G$ such that $\L = C_\G(S_\ell)^\circ$. Assume furthermore that $v = w F(w) \cdots F^{\delta -1}(w)$ generates $N_G(L)/L$. Let $m \in \{0,\ldots,e-1\}$. If 
${_{q^{m\delta}}\tRgc}(\X_\G(\P), k)$ is quasi-isomorphic to a module concentrated in degree $d$ with no projective  indecomposable summand, then there exists an isomorphism of $kG$-modules

\centers{${_{q^{m\delta}}\Hc^d}(\X_\G(\P),k) \, \simeq \,  \Omega^{2m-d} \, k.$}

\end{cor}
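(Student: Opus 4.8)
The plan is to compute the class of $M:={}_{q^{m\delta}}\Hc^d(\X_\G(\P),k)$ in $kG\mstab$ in two different ways and compare them. First, the hypothesis says that ${}_{q^{m\delta}}\tRgc(\X_\G(\P),k)$ is quasi-isomorphic to the complex $M[-d]$, with $M$ having no projective indecomposable summand. Since the translation functor of $kG\mstab$ is $\Omega^{-1}$, the shift $[-d]$ corresponds to $\Omega^{d}$, so in $kG\mstab$ one has ${}_{q^{m\delta}}\tRgc(\X_\G(\P),k)\simeq\Omega^{d}M$; in particular this object is nonzero, because $M$ has no projective summand.

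On the other hand, by definition ${}_{q^{m\delta}}\tRgc(\X_\G(\P),k)=\tRgc(\X_\G(\P),k)_{q^{-m\delta}}$, and since $v$ generates $N_G(L)/L$ we have $N_\Sigma=N_G(L)$, so Proposition \ref{prop:eigenspace} describes this eigenspace in $kG\mstab$. As it is nonzero we must be in the case $(q^{-m\delta})^e=1$, and then $\tRgc(\X_\G(\P),k)_{q^{-m\delta}}\simeq\Ind_{N_G(L)}^{G}k_{q^{-m\delta}}$ in $kG\mstab$. Comparing the two computations, it remains to establish that $\Ind_{N_G(L)}^{G}k_{q^{-m\delta}}\simeq\Omega^{2m}k$ in $kG\mstab$: granting this, $\Omega^{d}M\simeq\Omega^{2m}k$, hence $M\simeq\Omega^{2m-d}k$ in $kG\mstab$, and since $M$ has no projective summand while $\Omega^{2m-d}k$ is indecomposable and non-projective (the defect group $S_\ell$ being nontrivial), this stable isomorphism lifts to an isomorphism of $kG$-modules $M\simeq\Omega^{2m-d}k$.

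For the remaining identification I would argue through cyclic block theory. By Lemma \ref{lem:cyclic}, $S_\ell$ is a cyclic defect group of the principal block $B$ of $kG$, with $C_G(S_\ell)=L$ and $N_G(S_\ell)=N_G(L)$, and the Brauer correspondent $b$ (the principal block of $kN_G(L)$) is a Brauer star algebra: its simple modules are the linear characters of the cyclic $\ell'$-group $E=N_G(L)/L$, which acts faithfully on $S_\ell$, and $\Omega^{2}$ acts on these simple $b$-modules by tensoring with a fixed generator $\chi$ of the character group of $E$, with $\chi(\bar\sigma)$ equal to the image in $k^\times$ of the scalar by which $\sigma$ acts on $S_\ell$. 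The classical stable equivalence $b\mstab\xrightarrow{\ \sim\ }B\mstab$ for blocks with cyclic defect group is triangulated, sends the trivial module to the trivial module, and agrees with the Green correspondence on objects; being triangulated it commutes with $\Omega$, so the Green correspondent of $k_{\chi^{m}}=\Omega^{2m}k$ is $\Omega^{2m}k$. Since $\Ind_{N_G(L)}^{G}k_{\mu}$ is an $\ell$-permutation module, it is, up to projectives, the Green correspondent of $k_\mu$ together with summands of vertex a proper nontrivial subgroup $D_1$ of $S_\ell$; these last summands are controlled by the geometric input $C_\G(s)^{\circ}=\L$ for every nontrivial $\ell$-element $s$ (Lemma \ref{lem:cyclic}.(ii)), which through the compatibility $\Br_{D_1}(\gRgc(\X_\G(\P)))\simeq\gRgc(\X_\G(\P)^{D_1})$ prevents them from contributing to the $q^{-m\delta}$-eigenspace in $kG\mstab$. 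It then remains to match exponents: $\sigma$ induces a split $\mathbb{F}_{q^{\delta}}$-structure on $\L$, hence acts on $S_\ell\subset Z(\L)^{\circ}$ as multiplication by $q^{\delta}$, and as $\bar\sigma$ generates $N_G(L)/C_G(S_\ell)\hookrightarrow\Aut(S_\ell)$ this scalar reduces to a generator of $\mu_e(k)$; keeping track of the sign conventions in the definition of $\sigma$ (conjugation by $\dot v^{-1}$), in the passage from ${}_{\lambda}M$ to $M_{\lambda^{-1}}$, and in $\chi$, one gets $k_{q^{-m\delta}}=k_{\chi^{m}}$, as wanted.

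The main obstacle is precisely this last identification, $\Ind_{N_G(L)}^{G}k_{q^{-m\delta}}\simeq\Omega^{2m}k$ in $kG\mstab$. Two things require care: ruling out the extra non-projective summands of $\Ind_{N_G(L)}^{G}k_{q^{-m\delta}}$ of intermediate vertex (where the equality $C_\G(s)^{\circ}=\L$ and the Brauer-construction compatibility of $\gRgc$ are used), and the exponent matching, where the factor $2$ (coming from $\Omega^{2}$ moving one edge around the Brauer star $b$), the scalar action of $\sigma$ on $S_\ell$ from the split structure, and the direction of the cyclic stable equivalence relative to the Green correspondence must all be pinned down together, signs included. Everything else is routine bookkeeping with $\Omega$ and with the absence of projective summands.
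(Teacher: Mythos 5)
Your skeleton is the paper's: combine Proposition \ref{prop:eigenspace} (with $N_\Sigma=N_G(L)$, since $v$ generates $N_G(L)/L$) with an identification $k_{q^{-m\delta}}\simeq\Omega^{2m}k$, use that induction from $N_G(L)$ is stably trivial up to projectives, and lift the resulting stable isomorphism to a module isomorphism because neither side has a projective summand. Two points, however, need attention.

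First, the step you yourself single out as ``the main obstacle'' --- the identification $\Ind_{N_G(L)}^{G}k_{q^{-m\delta}}\simeq\Omega^{2m}k$ with all signs pinned down --- is precisely the content of the corollary, and you leave it open. The paper closes it in two lines: since $\sigma=\dot v F^{\delta}$ induces a split $\mathbb{F}_{q^{\delta}}$-structure on $Z(\L)^{\circ}$, the element $\dot v$ acts on $S_\ell$ by raising to the power $q^{-\delta}$; applying Example \ref{exstar} to the principal block of $kN_G(L)$ with $x=v$ and $d=q^{-\delta}$ (so $\zeta\equiv q^{-\delta}$) gives $\Omega^{2j}k=S_j=k_{q^{-j\delta}}$ as $kN_G(L)$-modules, whence ${}_{q^{m\delta}}\tRgc(\X_\G(\P))=\tRgc(\X_\G(\P))_{q^{-m\delta}}\simeq\Ind_{N_G(L)}^{G}\Omega^{2m}k\simeq\Omega^{2m}\Ind_{N_G(L)}^{G}k$ in $kG\mstab$. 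You list all the needed inputs but never perform this bookkeeping, so the exponent $2m$ (as opposed to $-2m$, or $2m$ shifted by a unit mod $e$) is not established.

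Second, your handling of the extra summands of $\Ind_{N_G(L)}^{G}k_\mu$ is misdirected. By Lemma \ref{lem:cyclic}(ii) distinct Sylow $\ell$-subgroups of $G$ intersect trivially, so in the Green correspondence for $(G,S_\ell,N_G(S_\ell))$ the relevant set of intersections $\{S_\ell\cap{}^{g}S_\ell:\ g\notin N_G(S_\ell)\}$ reduces to $\{1\}$: every indecomposable summand of $\Ind_{N_G(L)}^{G}k_\mu$ other than the Green correspondent is projective, and in particular $\Ind_{N_G(L)}^{G}k\simeq k$ in $kG\mstab$. There are no summands of ``intermediate vertex'' to rule out, and the compatibility $\Br_{D_1}(\gRgc(\X))\simeq\gRgc(\X^{D_1})$ plays no role here --- it concerns the complex, whereas the decomposition of the module $\Ind_{N_G(L)}^{G}k_\mu$ is a purely module-theoretic matter already settled by the trivial-intersection property. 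That geometric input was consumed earlier, in the proof of Proposition \ref{prop:eigenspace}, not at this stage.
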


\begin{proof} The endomorphism $\sigma$ induces a split $\mathbb{F}_{q^\delta}$-structure on the torus $Z(\mathbf{L})^\circ$, and therefore $\dot v$ acts on $S_\ell$ by raising any element to the power of $q^{-\delta}$. In particular, since $v$ has order $e$, the image  of $q^\delta$ in $k$ is a primitive $e$-th root of unity. We deduce that the one-dimensional representation of $N_G(L)$ on which $v$ acts by $q^{m\delta}$ satisfies $k_{q^{m\delta}} \simeq \Omega^{-2m} k$ in the category of $kN_G(L)$-modules (see Example \ref{exstar} below for more details). We deduce from Proposition \ref{prop:eigenspace} that 
${_{q^{m\delta}}\tRgc}(\X_\G(\P)) \simeq \Omega^{2m} \mathrm{Ind}_{N_G(L)}^G\, k$ in the stable category of $kG$-modules.

\sk 

Since the distinct Sylow $\ell$-subgroups of $G$ have trivial intersection (Lemma \ref{lem:cyclic}.(ii)), we have $ \mathrm{Ind}_{N_G(L)}^G\, k =  \mathrm{Ind}_{N_G(S_\ell)}^G\, k\simeq k$ in the stable category. The result follows then from the fact that two $kG$-modules that have no projective indecomposable summand are isomorphic in the stable category if and only if they are isomorphic as $kG$-modules.
\end{proof}

\begin{rmk} This corollary generalises to the eigenspace of an operator $D_{\dot v}$ (as defined in \cite{BMi2,DM3}) whenever $N_G(L)$ is generated by $L$ and $\dot v$.
\end{rmk}

\section{Brauer trees}

\subsection{Walking around Brauer trees}

 Let $\ell$ be a prime number, $\mathcal{O}$ be the ring of integers of a finite extension $K$ of $\mathbb{Q}_\ell$ and let $k$ be its residue field. We will assume that $K$ is large enough for all the finite groups encountered. Let $H$ be a finite group and $b\mathcal{O} H$ be a block of $\mathcal{O} H$. If the defect $D$ of the block is a non-trivial cyclic group, then the category of $b\mathcal{O} H$-modules can be described by a combinatorial objet, the \emph{Brauer tree} $\Gamma$ of the block \cite[Chapter VII]{Fe}:
 
\begin{itemize}

\item The set of vertices $\mathscr{V}$ of $\Gamma$ consists of the ordinary non-exceptional characters in the block and the sum $\chi_{\mathrm{exc}}$ of the exceptional characters in the block. The number of non-exceptional (resp. exceptional) characters will be denoted by $e$ (resp. $m$). The integer $m$ will also be referred to as the  \emph{multiplicity} of the exceptional vertex.

\item There is an edge $\chi$ --- $\chi'$ in the Brauer tree if there exists a projective indecomposable $b\mathcal{O} H$-module with character $\chi + \chi'$ for $\chi \neq \chi' $ in $\mathscr{V}$.

\item There is a cyclic ordering of the edges containing any given vertex, defining a planar embedding of the tree.

\end{itemize}

\noindent The planar embedded Brauer tree determines the category of $b\mathcal{O} H$-modules up to Morita equivalence. 

\sk

Let us first describe the structure of the projective indecomposable modules in the block. Let $P$ be such a module, and assume that its character is the sum of two non-exceptional characters $\chi$ and $\chi'$ as in the following picture:

\centers[3]{\begin{pspicture}(10,4.5)
  \psset{linewidth=1pt}

    \cnode(8,2){5pt}{A}
  \cnode(10,2){5pt}{B}
  \cnode(9.73,3){5pt}{C}
  \cnode(9.73,1){5pt}{D}
    \cnode(9,3.73){5pt}{H}
    \cnode(9,0.27){5pt}{I}
    \cnode(8,4){5pt}{J}

  \cnode(6.27,3){5pt}{F}
  \cnode(6.27,1){5pt}{G}

    \cnode(2,2){5pt}{A2}
  \cnode(0,2){5pt}{B2}
  \cnode(0.27,3){5pt}{C2}
  \cnode(0.27,1){5pt}{D2}
    \cnode(1,3.73){5pt}{H2}
    \cnode(1,0.27){5pt}{I2}
    \cnode(2,4){5pt}{J2}

  \cnode(3.73,3){5pt}{F2}
  \cnode(3.73,1){5pt}{G2}

  \ncline[nodesep=0pt]{A}{B}
  \ncline[nodesep=0pt]{A}{C}
  \ncline[nodesep=0pt]{A}{D}\ncput[npos=1.5]{$\chi_{i+1}$}
  \ncline[nodesep=0pt]{A}{F}\ncput[npos=1.4]{$\chi_s$}
  \ncline[nodesep=0pt]{A}{G}\ncput[npos=1.4]{$\chi_1$}
  \ncline[nodesep=0pt]{A}{H}
  \ncline[nodesep=0pt]{A}{I}\ncput[npos=1.4]{$\chi_i$}
  \ncline[nodesep=0pt]{A}{J}
  \ncline[nodesep=0pt]{A2}{B2}
  \ncline[nodesep=0pt]{A2}{C2}
  \ncline[nodesep=0pt]{A2}{D2}
  \ncline[nodesep=0pt]{A2}{F2}\ncput[npos=1.4]{$\eta_1$}
  \ncline[nodesep=0pt]{A2}{G2}\ncput[npos=1.4]{$\eta_r$}
  \ncline[nodesep=0pt]{A2}{H2}\ncput[npos=1.4]{$\eta_{j+1}$}
  \ncline[nodesep=0pt]{A2}{I2}
  \ncline[nodesep=0pt]{A2}{J2}\ncput[npos=1.4]{$\eta_j$}\ncput[npos=-0.4]{$\ \chi$}
  \ncline[nodesep=0pt]{A}{A2}\nbput{$P$}\naput[npos=0]{$\chi'$}

\psellipticarc[linestyle=dotted,linewidth=1.5pt](8,2)(2,2){103}{135}
\psellipticarc[linestyle=dotted,linewidth=1.5pt](8,2)(2,2){225}{287}
\psellipticarc[linewidth=1pt]{->}(8,2)(1,1){103}{135}
\psellipticarc[linewidth=1pt]{->}(8,2)(1,1){225}{287}

\psellipticarc[linestyle=dotted,linewidth=1.5pt](2,2)(2,2){45}{77}
\psellipticarc[linestyle=dotted,linewidth=1.5pt](2,2)(2,2){253}{315}
\psellipticarc[linewidth=1pt]{->}(2,2)(1,1){45}{77}
\psellipticarc[linewidth=1pt]{->}(2,2)(1,1){253}{315}

\end{pspicture}}

\noindent Denote by $S_j$ (resp. $T_i$) the simple $kH$-module whose projective cover has character $\chi+\eta_j$ (resp. $\chi' + \chi_i$) over $K$ and let $\overline{P} = P \otimes_\mathcal{O} k$. Assume $\chi,\chi' \neq \chi_\mathrm{exc}$. The module $\mathrm{rad} \, \overline{P} / \mathrm{soc}\, \overline{P}$ is the direct sum of two uniserial modules with composition series $S_1, \ldots, S_r$ and $T_1,\ldots,T_s$ so that $\overline{P}$ has the following structure:
\begin{equation}\label{formproj}
 \begin{array}{ccc} & S  & \\ S_1 &  & T_1 \\ \vdots & & \vdots \\ S_r & & T_s \\ & S & \end{array}
 \end{equation}
\noindent In addition, the unique quotient $U$ (resp. submodule $V$) of $\overline{P}$ which has $S,S_1, \ldots, $ $ S_r$ (resp. $T_1, \ldots, T_s, S$) as a  composition series can be lifted  to an $\mathcal{O}$-free $\mathcal{O} H$-module of character $\chi$ (resp. $\chi'$). The structure of $\overline{P} = P_U$ yields $\Omega U \simeq V$.  Now $V$ is in turn a quotient of a projective cover of $T_1$, so that $\Omega V = \Omega^2 U$ is a uniserial module with character $\chi_1$. By iterating with process, we obtain a sequence $(\Omega^i U)_{i \geq 0}$ of uniserial modules, each of which lifts  to an $\mathcal{O}$-free $\mathcal{O} H$-module yielding an irreducible ordinary character (or the exceptional character) in the block.  This sequence is called the \emph{Green walk} starting at $U$ \cite{Gr}. It is periodic of period $2e$ and can be easily read off from the planar embedded tree. 

\begin{rmk}
When $\chi = \chi_\mathrm{exc}$, the structure of $\overline{P}$ described above is slightly different: one should turn around the exceptional node as many times as the multiplicity of the exceptional vertex. This amounts to repeating $m$ times the composition series $S, S_1, \ldots, S_r$ in $U$. 
\end{rmk}

\begin{exemple}\label{exstar}We close this section with the example of a star. Assume that $H = D \rtimes E$ where $D$ is a cyclic $\ell$-group and $E$ is an $\ell'$-subgroup of $\mathrm{Aut}(D)$. Fix a generator $x$ of $E$ of order $e$. Then $x$ acts on $D$ by raising the elements to some power $d$. By Hensel's Lemma there exists a primitive $e$-th root of unity $\zeta \in \mathcal{O}$ congruent to $d$. Denote by $\chi_1,\ldots,\chi_e$ the one-dimensional characters of $H$ over $K$ such that $\chi_i(x) = \zeta^i$ and denote by $S_1, \ldots, S_e = k$ the associated $kH$-modules. The exceptional characters are  the characters of $H$ of dimension $> 1$. The planar embedded Brauer tree of the principal $\ell$-block of $H$ is given by the following picture:

\centers[3]{\begin{pspicture}(4,4.5)

  \psset{linewidth=1pt}

   \cnode[fillstyle=solid,fillcolor=black](2,2){5pt}{A2}
       \cnode(2,2){8pt}{A}
  \cnode(4,2){5pt}{B}
  \cnode(3.73,3){5pt}{C}
  \cnode(3.73,1){5pt}{D}
    \cnode(3,3.73){5pt}{H}
    \cnode(3,0.27){5pt}{I}
    \cnode(2,4){5pt}{J}

  \cnode(0,2){5pt}{E}
  \cnode(0.27,3){5pt}{F}
  \cnode(0.27,1){5pt}{G}

  \ncline[nodesep=0pt]{A}{B}\ncput[npos=1.4]{$\chi_1$}
  \ncline[nodesep=0pt]{A}{C}\ncput[npos=1.45]{$\chi_2$}
  \ncline[nodesep=0pt]{A}{D}\ncput[npos=1.38]{$\hphantom{aaa}\chi_{e}$}
  \ncline[nodesep=0pt]{A}{E}
  \ncline[nodesep=0pt]{A}{F}
  \ncline[nodesep=0pt]{A}{G}
  \ncline[nodesep=0pt]{A}{H}\ncput[npos=1.42]{$\chi_3$}\ncput[npos=-0.55]{$\ \ \, \chi_{\mathrm{exc}}$}
  \ncline[nodesep=0pt]{A}{I}\ncput[npos=1.4]{$\hphantom{aaa}\chi_{e-1}$}
  \ncline[nodesep=0pt]{A}{J}\ncput[npos=1.4]{$\chi_4$}

\psellipticarc[linestyle=dotted,linewidth=1.5pt](2,2)(2,2){103}{135}
\psellipticarc[linestyle=dotted,linewidth=1.5pt](2,2)(2,2){225}{287}
\psellipticarc[linewidth=1pt]{->}(2,2)(1.2,1.2){103}{135}
\psellipticarc[linewidth=1pt]{->}(2,2)(1.2,1.2){225}{287}

\end{pspicture}}

\noindent In this particular case, the syzygies of a module $S_j$ satisfy $\Omega^{2i} S_j = S_{j+i}$ and  Green's walk from $S_j$ yields the sequence $\chi_j,\chi_\mathrm{exc},\chi_{j+1}, \chi_\mathrm{exc}, \chi_{j+2}, \ldots$.

\end{exemple}

\subsection{Brauer trees of the principal \texorpdfstring{$\Phi_h$}{Phih}-block \label{sect:phih}}

When $d$ is the Coxeter number, Hi\ss, L\"ubeck and Malle have formulated  in \cite{HLM} a  conjecture describing the Brauer tree of the principal $\Phi_d$-block. In this section we shall combine the results of \cite{Du4} and \S \ref{sect:dlvar} to obtain a general proof of the conjecture. This includes the determination of the previously unknown planar embedded tree for groups of type ${}^2 F_4$, $E_7$ and $E_8$. As a byproduct we obtain  a proof of the geometric version of Brou\'e's conjecture for varieties associated with Coxeter elements when $p$ is good (see Theorem \ref{thm:equiv}).

\sk

In this section, $\G$ is a connected reductive group, $\T$ is a maximal $F$-stable torus of $\G$, $\Phi = \Phi(\G,\T)$ is the corresponding root system and $W = N_\G(\T)/\T$ its Weyl group. We put $\phi = q^{-1} F $, a linear transformation  of $V = Y(\T)\otimes \mathbb{C}$.  Throughout this section we will assume that $V$ is irreducible for the action of $W \rtimes \langle \phi \rangle$. In particular, $\G$ decomposes as an almost product of simple groups that are permuted cyclically by $F$.

\mk

\noindent \textbf{\thesubsection.1 Previous results.\label{sec31}} We  assume   $\T$ is a  \emph{Coxeter torus}. This means that $\phi$ has an eigenvalue of order the Coxeter number $h$, where $h$ is the maximal possible order of an eigenvalue of $y\phi$ in $V$ for $y \in W$. In that case there exists  $w \in W$ and a $w\phi$-stable basis $\Delta$ of $\Phi$ such that each orbit of $\Phi$ under $w\phi$ contains exactly one positive root $\alpha$ such that $\phi(\alpha) <0$ (cf.  \cite[Section 7]{Sp}). Furthermore, the following properties are satisfied:

\begin{itemize}

\item the $\exp(2\pi \mathrm{i}/h)$-eigenspace of $\phi$ in $V$ is maximal and it is a line which  intersects trivially any reflecting hyperplane. As a consequence, the order of $|G|$, which is a polynomial in $q$, has $\Phi_h$ as a simple factor;

\item if $\delta$ denotes the order of $w\phi$ as an endomorphism of $V$, then $C_W(F)$ is a cyclic group of order $h_0 = h/\delta$ generated by $v = wF(w) \cdots F^{\delta-1}(w)$.
  
\end{itemize}

\noindent The basis $\Delta$ defines a $wF$-stable Borel subroup $\B$ containing $\T$. The corresponding Deligne-Lusztig variety $\X_\G(\B)$ will be referred to as a \emph{Coxeter variety} and we will denote by $r$ its dimension.

\sk

We assume the image of $q$ in $k$ is a primitive $h$-root of $1$. If $\G$ has type ${}^2 B_2$ (resp. ${}^2 F_4$, resp. ${}^2 G_2$) we assume in addition that $\ell \mid \ q^2 - q \sqrt2 +1$ (resp. $\ell \mid q^4-q^3\sqrt2 + q^2- q \sqrt2 +1$, resp. $\ell \mid q^2 -q\sqrt3+1$). The Sylow $\ell$-subgroups of $G$ are cyclic (cf. proof of Lemma \ref{lem:cyclic}) and $\T$ is the centraliser of one of them. 

\sk

Let us recall some results of Lusztig \cite{Lu} on the cohomology of  Coxeter varieties. We fix an $F$-stable lift $\dot v$ of $v$ in $N_\G(\T)$. The Frobenius endomorphism $\sigma = \dot v F^\delta$ acts (on the left) semi-simply on $\Hc^i(\X_\G(\B),K)$ and  each eigenvalue is equal  to $q^{j\delta}$ in $k$ for a unique $j \in \{0,\ldots,h_0-1\}$. The eigenspaces of $\sigma$ are mutually disjoint irreducible $KG$-modules and their characters $\{\chi_0,\ldots,\chi_{h_0-1}\}$ are the non-exceptional characters in the block. Moreover, if we fix a square root of $q^\delta$ in $K$, then each eigenvalue of $\sigma$ can be written as $\zeta q^{im\delta/2}$ for some integer $i$ and some root of unity $\zeta$ which depends only on the Harish-Chandra series of the associated eigenspace. For a given $\zeta$, the contribution of the  corresponding Harish-Chandra series to the cohomology of $\X_\G(\B)$ is given by

\centers{$ \begin{array}{c|c|@{\qquad \cdots \qquad}|c} 
\Hc^r(\X_\G(\B) ,K) & \Hc^{r+1}(\X_\G(\B) ,K)  & \Hc^{r+M_\zeta-m_\zeta}(\X_\G(\B) ,K) 
\\[4pt] \hline \chi_{m_\zeta} \vphantom{\mathop{A}\limits^n} & \chi_{m_\zeta+1} &  \chi_{M_\zeta} \\ \end{array}$}

\noindent for some $M_\zeta \ge m_\zeta$. Furthermore, according to \cite{Ge3}, the following tree

\centers{ \begin{pspicture}(10,1)
  \psset{linewidth=1pt}

  \cnode(0,0.2){5pt}{A}
  \cnode(2,0.2){5pt}{B}
  \cnode(4,0.2){5pt}{C}
  \cnode(8,0.2){5pt}{D}
  \cnode(10,0.2){5pt}{E}
  \ncline{A}{B}  \naput[npos=-0.1]{$\vphantom{\Big(}\chi_{m_\zeta}$} \naput[npos=1.1]{$\vphantom{\Big(}\chi_{m_\zeta+1}$}
  \ncline{B}{C}  \naput[npos=1.1]{$\vphantom{\Big(}\chi_{m_\zeta+2}$}
  \ncline[linestyle=dashed]{C}{D}
  \ncline{D}{E}  \naput[npos=-0.1]{$\vphantom{\Big(}\chi_{M_\zeta-1}$} \naput[npos=1.1]{$\vphantom{\Big(}\chi_{M_\zeta}$}
\end{pspicture}}

\noindent is a subtree of the Brauer tree of the principal $\ell$-block. The missing vertex is the exceptional one: it corresponds to the non-unipotent characters in the block. By \cite{Du3}, the missing edges are labelled by the cuspidal $kG$-modules in the block. With this notation, the conjecture of Hi\ss-L\"ubeck-Malle \cite{HLM} can be stated as follows:

\begin{conj}[Hi\ss-L\"ubeck-Malle] Let $\Gamma$ be the Brauer tree of the principal $\Phi_h$-block. Then 
\begin{enumerate}

\item[$\mathrm{(i)}$] (Shape of the tree) The vertices labelled by $\chi_{m_{\zeta}}$ are the only nodes connected to the exceptional node in $\Gamma$.

\item[$\mathrm{(ii)}$] (Planar embedding) The  vertices labelled by $\chi_{m_{\zeta}}$  are ordered around the exceptional vertex according to increasing values of $m_\zeta$.

\end{enumerate}
\end{conj}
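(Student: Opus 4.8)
The plan is to derive the conjecture from Corollary~\ref{corsyz}, applied to the minimal eigenvalues of Frobenius, combined with the partial information on the Brauer tree already available: the shape of each Harish-Chandra branch (\cite{Ge3}) and the labelling of the edges at the exceptional vertex by cuspidal modules (\cite{Du3}). I first check that the hypotheses of Corollary~\ref{corsyz} hold in the present situation. Here $\L=\T$ is a Coxeter torus, $\P=\B$ is the $wF$-stable Borel subgroup containing it, and a cyclic Sylow $\ell$-subgroup $S_\ell$ of $G$ satisfies $\T=C_\G(S_\ell)^\circ$; by Section~\ref{sec31} the element $v=wF(w)\cdots F^{\delta-1}(w)$ generates the cyclic group $N_G(L)/L$, of order $h_0=e$, and $\Sigma^+=\langle\sigma\rangle$ is cyclic (Ree and Suzuki groups being treated identically, with $q$ the appropriate real number). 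Now fix a Harish-Chandra series $\zeta$ occurring in the cohomology, with minimal index $m_\zeta$. By Lusztig's description the generalised $q^{m_\zeta\delta}$-eigenspace of $\sigma$ on $\Hc^\bullet(\X_\G(\B),K)$ is the irreducible module affording $\chi_{m_\zeta}$, concentrated in degree $r$. The vanishing theorem of \cite{Du4} asserts that the corresponding mod-$\ell$ generalised eigenspace is again concentrated in degree $r$ and carries no $\mathcal{O}$-torsion; hence $M_\zeta:=\Hc^r(\X_\G(\B),k)_{q^{m_\zeta\delta}}$ is the reduction $\overline{L_\zeta}$ of an $\mathcal{O}$-lattice $L_\zeta$ affording $\chi_{m_\zeta}$, it has no projective direct summand, and $\tRgc(\X_\G(\B),k)_{q^{m_\zeta\delta}}$ is quasi-isomorphic to $M_\zeta$ placed in degree $r$. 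Corollary~\ref{corsyz} then gives an isomorphism of $kG$-modules $\overline{L_\zeta}\cong\Omega^{2m_\zeta-r}k$, for every $\zeta$, the stable isomorphism of that corollary being an honest one as neither side has a projective summand.

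I now translate this into combinatorics of the Brauer tree $\Gamma$ of the principal $\Phi_h$-block. Write $0=m_{\zeta_1}<m_{\zeta_2}<\cdots<m_{\zeta_t}$ for the minimal indices; since the indices of one series are consecutive, the blocks $\{m_{\zeta_i},\ldots,M_{\zeta_i}\}$ partition $\{0,\ldots,e-1\}$ and $m_{\zeta_{i+1}}=M_{\zeta_i}+1$. By \cite{Ge3} each path $P_i:\ \chi_{m_{\zeta_i}}\!-\!\chi_{m_{\zeta_i}+1}\!-\!\cdots\!-\!\chi_{M_{\zeta_i}}$ is a subtree of $\Gamma$; the $P_i$ are pairwise vertex-disjoint and cover all non-exceptional vertices, so $\Gamma$ is obtained from $P_1\sqcup\cdots\sqcup P_t\sqcup\{\mathrm{exc}\}$ by adjoining exactly $t$ further (``missing'') edges, which by \cite{Du3} are the $t$ cuspidal modules of the block. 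The module $\overline{L_{\zeta_i}}$, being the reduction of $\chi_{m_{\zeta_i}}$, has for composition factors exactly the edges incident with the vertex $\chi_{m_{\zeta_i}}$, and by the isomorphism above it equals $\Omega^{2m_{\zeta_i}-r}k$; thus, along Green's walk normalised by $\overline{L_{\zeta_1}}=\Omega^{-r}k$, the term in position $2m_{\zeta_i}$ has character $\chi_{m_{\zeta_i}}$.

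The conjecture is now read off Green's walk. Starting at $\overline{L_{\zeta_1}}=\overline{\chi_0}$ and following the walk along $P_1$ in the direction away from the exceptional vertex, the term in position $j$ has character $\chi_j$ for $0\le j\le M_{\zeta_1}$, the next $M_{\zeta_1}$ terms walk back down $P_1$, position $2M_{\zeta_1}+1$ reaches the exceptional vertex, and position $2M_{\zeta_1}+2=2m_{\zeta_2}$ must carry $\chi_{m_{\zeta_2}}$. Since while running along $P_1$ the walk only meets vertices of $P_1$, the second branch cannot be entered before position $2M_{\zeta_1}+2$; equality forces the missing edge issuing from $\chi_0$ to end at the exceptional vertex and the walk to step at once into $P_2$ through a missing edge at $\chi_{m_{\zeta_2}}$. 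Iterating, one obtains that every missing edge joins some $\chi_{m_{\zeta_i}}$ to the exceptional vertex; as the remaining edges of $\Gamma$ lie inside the branches $P_i$, the exceptional vertex has degree $t$ with neighbours precisely $\chi_{m_{\zeta_1}},\ldots,\chi_{m_{\zeta_t}}$, which is part~(i). Moreover the branches are met by the walk in the order $\zeta_1,\zeta_2,\ldots,\zeta_t$, i.e.\ by increasing $m_\zeta$, and the order in which a walk around a planar tree meets the edges at a vertex is precisely the cyclic order of the planar embedding, which is part~(ii).

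The main obstacle is this last, purely combinatorial, step. Green's walk must be controlled exactly through the degree-two vertices interior to the branches and, above all, through the exceptional vertex, where for multiplicity $m>1$ the walk turns around it $m$ times, using the modified structure of the projective covers recalled before Example~\ref{exstar}; and one must genuinely exclude the a priori possibility that a missing edge joins one branch to another, rather than to the exceptional vertex — it is exactly there that the quantitative identity ``position $2m_{\zeta_i}\mapsto\chi_{m_{\zeta_i}}$'' is indispensable, since any branch-to-branch attachment would let the corresponding branch be entered at a position strictly smaller than $2m_{\zeta_i}$. The handful of degenerate configurations (very small groups, or a single Harish-Chandra series) are dealt with by direct inspection.
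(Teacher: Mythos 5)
Your first half coincides with the paper's: the concentration result from \cite{Du3,Du4} plus Corollary~\ref{corsyz} give $\overline{L_\zeta}\simeq\Omega^{2m_\zeta-r}k$, which is exactly equation~(\ref{eigenspace}). The gap is in the combinatorial second half. You record only \emph{one} occurrence of each $\chi_{m_\zeta}$ along Green's walk (position $2m_{\zeta}$ in your normalisation), and you claim that attaching a branch to another branch would force it to be ``entered at a position strictly smaller than $2m_{\zeta_i}$'', hence a contradiction. But that is not a contradiction: Green's walk visits a vertex once for each edge incident to it, so $\chi_{m_{\zeta_i}}$ can be met before position $2m_{\zeta_i}$ and again \emph{at} position $2m_{\zeta_i}$. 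Concretely, take two series with $P_1=\chi_0\,\text{---}\,\chi_1$, $P_2=\chi_2\,\text{---}\,\chi_3$ and the wrong tree $\mathrm{exc}\,\text{---}\,\chi_0\,\text{---}\,\chi_1\,\text{---}\,\chi_2\,\text{---}\,\chi_3$ (cuspidal edges $\mathrm{exc}\,\text{---}\,\chi_0$ and $\chi_1\,\text{---}\,\chi_2$): for a suitable embedding and starting edge the walk reads $\chi_0,\chi_1,\chi_2,\chi_3,\chi_2,\chi_1,\chi_0,\mathrm{exc},\chi_0,\ldots$, so $v_0=\chi_0$ and $v_4=\chi_2=\chi_{m_{\zeta_2}}$; all your constraints hold, the first character of the second series to appear is even $\chi_{m_{\zeta_2}}$, yet the conclusion fails. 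Relatedly, your intermediate assertions (``the term in position $j$ has character $\chi_j$'', ``position $2M_{\zeta_1}+1$ reaches the exceptional vertex'', ``while running along $P_1$ the walk only meets vertices of $P_1$'') presuppose the shape of the tree you are trying to establish.

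What is missing is the \emph{second} occurrence of each $\chi_{m_\zeta}$, at position $2M_\zeta$ in your normalisation. The paper obtains it by combining the self-duality of the principal block --- so that $\Omega^{r-2m_{\zeta^{-1}}}k$ also lifts to a lattice of character $\chi_{m_\zeta}$ --- with the relation $m_{\zeta^{-1}}+M_\zeta\equiv r\ \mathrm{mod}\ h$, extracted from Lusztig's table of Frobenius eigenvalues. This produces the full pattern~(\ref{walk}), in which each branch is both entered and left at its head and the gap between the two visits is exactly twice the length of the branch; that is what kills the configuration above (there $v_6=\chi_0\neq\chi_2$). On top of this the paper still needs the first-occurrence statement (Lemma~\ref{lemwalk}) and a parity argument to identify the vertex met immediately after the head of a branch with the exceptional vertex; you prove neither. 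So the strategy is right, but the walk is underdetermined by the data you actually establish, and the duality step is not an optional refinement --- it is the ingredient that makes the combinatorics close.
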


Assertion (i) is known to hold for any group but $E_7$ and $E_8$. The proof relies on a case-by-case analysis, combining results of Fong-Srinivasan for  classical groups \cite{FoSr}, Hi\ss\ for  Ree  groups \cite{Hiss}, Hi\ss-L\"ubeck-Malle for groups of type $E_6$ \cite{HLM} and Hi\ss-L\"ubeck for groups of type $F_4$ and ${}^2 E_6$ \cite{HL}. The  planar embedding was not known for groups of type ${}^2 G_2$, ${}^2 F_4$, $F_4$, $E_7$ and $E_8$. The first two cases were recently settled in \cite{Du3} (under the assumption $p \neq 2,3$ in type ${}^2 F_4$), but the other cases remained unsolved. Our main result gives an unconditional proof of this conjecture:

\begin{thm}\label{thm:hlm} The conjecture of Hi\ss-L\"ubeck-Malle holds.
\end{thm}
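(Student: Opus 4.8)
The plan is to deduce the theorem from the combination of the geometric results of \S\ref{sect:dlvar}, the vanishing results of \cite{Du4}, and the combinatorics of Green's walk recalled in \S\ref{sect:phih}. The key point is that Corollary \ref{corsyz} pins down, for each ``minimal'' eigenvalue of Frobenius, the isomorphism class of the corresponding cohomology module in the stable category $kG\mstab$, namely $\Omega^{2m-d}k$; and the structure of projectives in a cyclic-defect block (the picture \eqref{formproj} and the associated Green walk) translates such stable-category information directly into a statement about which edge of the Brauer tree carries the corresponding cuspidal simple module. So the strategy is: (1) recall from \cite{Du4} that for each Harish-Chandra series $\zeta$, the eigenspace ${_{q^{m_\zeta\delta}}\tRgc}(\X_\G(\B),k)$ attached to the minimal index $m_\zeta$ is concentrated in a single cohomological degree $d_\zeta$ and, after removing projective summands, is a cuspidal module $S_\zeta$ (this is exactly the input singled out in the introduction); (2) apply Corollary \ref{corsyz} — whose hypotheses are met here since $\T$ is a Coxeter torus centralising a cyclic Sylow $\ell$-subgroup and $v=wF(w)\cdots F^{\delta-1}(w)$ generates $N_G(L)/L=C_W(F)$ by the second bulleted property in \S\ref{sec31} — to conclude $S_\zeta \simeq \Omega^{2m_\zeta-d_\zeta}k$ in $kG\mstab$.

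Next I would convert this into a statement about the Brauer tree. In the principal $\Phi_h$-block, the trivial module $k$ sits at the edge of the tree adjacent to the exceptional vertex on one distinguished side, and from Example \ref{exstar} (the star case, which governs the local structure $N_G(D)$ via the derived/stable equivalence) one reads that $\Omega^{2i}k$ runs through the edges around the exceptional node in a fixed cyclic order. More precisely, the periodic sequence $(\Omega^iU)_{i\ge0}$ described after \eqref{formproj} — Green's walk of period $2e$ — visits the non-exceptional vertices $\chi_0,\chi_1,\dots$ exactly in the order prescribed by the planar embedding, with the exceptional character interleaved every other step. So knowing $S_\zeta\simeq\Omega^{2m_\zeta-d_\zeta}k$ stably, together with the subtree structure recalled from \cite{Ge3} (the $\chi_{m_\zeta},\chi_{m_\zeta+1},\dots,\chi_{M_\zeta}$ line) and the identification from \cite{Du3} of the missing edges with the cuspidal modules, forces: first, that the only vertices adjacent to the exceptional node are the $\chi_{m_\zeta}$ (assertion (i)); and second, that going around the exceptional node corresponds to increasing the exponent $2m_\zeta-d_\zeta$ of $\Omega^{-1}k$, hence — using that $d_\zeta$ is determined by $m_\zeta$ through the degree in which the minimal eigenspace is concentrated, as computed in \cite{Du4} — to increasing $m_\zeta$ (assertion (ii)).

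Concretely, the steps in order are: (a) fix the setup of \S\ref{sect:phih}, check the hypotheses of Corollary \ref{corsyz}; (b) invoke the concentration-in-one-degree theorem of \cite{Du4} for the minimal eigenspaces and record the degree $d_\zeta$; (c) apply Corollary \ref{corsyz} to get the stable isomorphism type $\Omega^{2m_\zeta-d_\zeta}k$ of each cuspidal $S_\zeta$; (d) use the block-theoretic translation (\S\ref{sect:phih}, Example \ref{exstar}, and the subtree from \cite{Ge3}/\cite{Du3}) to read off which edge of $\Gamma$ each $S_\zeta$ labels and in which cyclic position relative to the exceptional node; (e) conclude assertions (i) and (ii). For the groups where assertion (i) was already known (\cite{FoSr,Hiss,HLM,HL,Du3}) only the planar-embedding part (ii) needs the new argument, so the write-up can treat (i) uniformly via step (d) and then concentrate on (ii); the residual cases ${}^2F_4$ (removing the $p\neq2,3$ hypothesis of \cite{Du3}), $F_4$, $E_7$ and $E_8$ are handled by the same mechanism, the only group-dependent inputs being the numerology of $\zeta$, $m_\zeta$, $M_\zeta$ and the degrees $d_\zeta$, which are available from \cite{Lu,Du4}.

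The main obstacle I expect is step (d): faithfully translating ``$S_\zeta \simeq \Omega^{2m_\zeta-d_\zeta}k$ in $kG\mstab$'' into a statement about the \emph{planar-embedded} tree of $\Gamma$, rather than merely its shape. Stable isomorphism type records the syzygy class, and Green's walk tells us how syzygies move around a cyclic-defect tree, but to match the abstract walk with the concrete labelling by the $\chi_j$ one needs to know the starting edge (that $k$ is the edge adjacent to the exceptional vertex on the correct side) and the orientation of the walk. This requires combining the Frobenius-eigenvalue bookkeeping of Lusztig \cite{Lu} — which orders the $\chi_j$ by the power $q^{j\delta}$ — with the sign conventions implicit in $\Omega^{-1}$ being the translation functor of $kG\mstab$, and checking these are consistent for each type. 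Once that dictionary is set up correctly, assertions (i) and (ii) follow formally.
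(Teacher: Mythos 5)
Your skeleton matches the paper's: both arguments rest on the concentration of the minimal generalised eigenspaces in one degree (this is \cite[Proposition 2.12]{Du3}; the degree is always $r=\dim\X_\G(\B)$, it does not vary with $\zeta$ as your $d_\zeta$ suggests), on Corollary \ref{corsyz} giving ${_{q^{m_\zeta\delta}}\Hc^{r}}(\X_\G(\B),k)\simeq\Omega^{2m_\zeta-r}k$, and on converting syzygies of $k$ into positions of Green's walk. But the step you yourself flag as ``the main obstacle'' --- your step (d) --- is where the actual proof lives, and your proposal leaves it unresolved. Corollary \ref{corsyz} supplies only \emph{one} data point per Harish-Chandra series: the vertex visited at step $2m_\zeta-r$ of the walk is $\chi_{m_\zeta}$. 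Green's walk has period $2e$ and visits each edge twice, so $s$ data points out of $2e$ do not determine the tree, let alone its planar embedding, even granted Geck's subtrees. The missing idea is the self-duality of the principal block: the dual of $\Omega^{2m_\zeta-r}k$ is $\Omega^{r-2m_\zeta}k$ and the dual of $\chi_{m_\zeta}$ is $\chi_{m_{\zeta^{-1}}}$, so $\Omega^{r-2m_{\zeta^{-1}}}k$ \emph{also} lifts to a lattice of character $\chi_{m_\zeta}$. Combined with the congruence $m_{\zeta^{-1}}+M_\zeta\equiv r \ \mathrm{mod}\ h$, extracted from Lusztig's table of eigenvalues, this doubles the data and produces the precise pattern \eqref{walk}: the two occurrences of $\chi_{m_\zeta}$ are $2(M_\zeta-m_\zeta)$ steps apart and $\chi_{m_{\zeta_{i+1}}}$ occurs exactly two steps after the second occurrence of $\chi_{m_{\zeta_i}}$. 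Even then, one still needs the first-occurrence statement (Lemma \ref{lemwalk}) and a parity argument to exclude the configuration in which the vertex reached two steps beyond the Steinberg character is non-exceptional; only after that do (i) and (ii) follow by iteration.

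A secondary inaccuracy worth correcting: the minimal eigenspace ${_{q^{m_\zeta\delta}}\Hc^{r}}(\X_\G(\B),k)$ is not a cuspidal simple module. It is a (typically non-simple) uniserial module that lifts to an $\mathcal{O}$-free module of character $\chi_{m_\zeta}$; it therefore records a \emph{vertex} reached by the walk, not an \emph{edge} label. The cuspidal simples label the edges adjacent to the exceptional vertex only as a consequence of the completed argument, so treating ``which edge $S_\zeta$ labels'' as the input of step (d) reverses the logic.
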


From \cite[Theorems 3.9 and 3.11]{Du4}, we deduce that the geometric version of Brou\'e's conjecture holds for the principal $\Phi_h$-block whenever $p$ is good. Moreover, in that case the contribution to the block of the cohomology of $\Y_\G(\U)$ with coefficients in $\mathcal{O}$  is torsion-free. 

\begin{thm}\label{thm:equiv}Assume $p$ is good. Then there is a bounded complex $C$ of finitely generated $\ell$-permutation
$\mathcal{O}(G\times N_G(T)^\opp)$-modules such that
$\Res_{G\times(L\rtimes\Sigma)^\opp}^{G\times N_G(T)^\opp}(C\otimes_\mathcal{O} k)$ is
isomorphic to $\tRgc(\Y_{\mathbf{G}}(\mathbf{U}),k)$ in
$\Ho^b(k(G\times (T\rtimes\Sigma)^\opp))$
and such that $C$ induces a perverse Rickard equivalence between the principal
blocks of $\mathcal{O} G$ and $\mathcal{O} N_G(T)$.\end{thm}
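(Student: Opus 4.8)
This statement packages, in the $\ell$-permutation language of \S1, the results of \cite{Du4}: once Theorem~\ref{thm:hlm} is available, the vanishing input needed in types $E_7$ and $E_8$ is in place, so \cite[Theorems 3.9 and 3.11]{Du4} apply uniformly and yield, for $p$ good, both the geometric form of Brou\'e's conjecture for the principal $\Phi_h$-block over $\mathcal{O}$ and the torsion-freeness of the block part of $\Hc^*(\Y_\G(\U),\mathcal{O})$. The plan is to exhibit $C$ as a good $\ell$-permutation representative, over $\mathcal{O}$, of the block part of $\Rgc(\Y_\G(\U),\mathcal{O})$ carrying its $N_G(T)^{\opp}$-equivariant structure, and then to check the two reductions in the statement.

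\emph{Construction of $C$.} Since a Rickard equivalence is, in the sense relevant here, realised by a bounded complex of $\ell$-permutation bimodules, \cite[Theorems 3.9 and 3.11]{Du4} already provide a bounded complex $C$ of finitely generated $\ell$-permutation $\mathcal{O}(G\times N_G(T)^{\opp})$-modules, isomorphic in $D^b$ to the block part of $\Rgc(\Y_\G(\U),\mathcal{O})$ and inducing a perverse Rickard equivalence between $b\,\mathcal{O} G$ and $b_0\,\mathcal{O} N_G(T)$, where $b,b_0$ are the principal block idempotents. Its right $N_G(T)^{\opp}$-action extends the right $T^{\opp}$-action of the Deligne--Lusztig variety; on the subcomplex that is not projective over $G\times T^{\opp}$ this is the honest $N_G(T)^{\opp}$-action on $\Y_\G(\U)_\ell$, which is a finite $(G\times N_G(T)^{\opp})$-set by Lemmas~\ref{lem:goveru} and~\ref{lem:yelldecomposition} (using that $v=wF(w)\cdots F^{\delta-1}(w)$ generates $N_G(T)/T$ in the Coxeter situation, cf.\ Lemma~\ref{lem:cyclic}). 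Note that the hypotheses of \S1 are met: $\Sigma^+$ being cyclic, $\Upsilon/(G\times T^{\opp})\cong\Sigma$ is free of rank one, hence good, and the results of \S1 hold over the noetherian ring $\mathcal{O}$.

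\emph{Reduction modulo $\ell$.} Since the block part of $\Hc^*(\Y_\G(\U),\mathcal{O})$ is torsion-free, $C$ is a bounded complex of $\mathcal{O}$-free modules with $\mathcal{O}$-free cohomology, so $C\otimes_\mathcal{O} k$ is isomorphic in $D^b(k(G\times N_G(T)^{\opp}))$ to the block part of $\Rgc(\Y_\G(\U),k)$, compatibly with the $N_G(T)^{\opp}$-action. Restricting along $T\rtimes\Sigma\to N_G(T)$, $\sigma\mapsto\dot v^{-1}$, the complex $\Res^{G\times N_G(T)^{\opp}}_{G\times(T\rtimes\Sigma)^{\opp}}(C\otimes_\mathcal{O} k)$ is then a bounded $\ell$-permutation complex isomorphic in $D^b(k\Upsilon)$ to $\gRgc(\Y_\G(\U),k)$ with its geometric $\Sigma^{\opp}$-action. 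To identify it with $\tRgc(\Y_\G(\U),k)$ in $\Ho^b$ I would compare the two complexes through the open--closed triangle along $\Y_\G(\U)_\ell$ (Remark~\ref{rmk:ramifiedlocus}): off $\Y_\G(\U)_\ell$ the restriction to $G\times T^{\opp}$ is perfect, so Proposition~\ref{prop:goodrep} pins that part down uniquely in $\Ho^b(k\Upsilon,kH)$, while on $\Y_\G(\U)_\ell$ both sides equal the permutation module $k[\Y_\G(\U)_\ell]$ concentrated in degree $0$; torsion-freeness is what guarantees that $-\otimes_\mathcal{O} k$ respects this triangle.

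\emph{Main obstacle.} I expect the real difficulty to lie not in any single step but in arranging that one and the same $C$ does all three things at once: carries the $N_G(T)^{\opp}$-equivariant structure (which is visible only on the ramified, non-projective part of the cohomology), reduces modulo $\ell$ to the \emph{geometric} invariant $\tRgc(\Y_\G(\U),k)$, and induces the derived equivalence. The torsion-freeness of the block part of the $\mathcal{O}$-cohomology (from \cite{Du4}) is what makes the integral and the mod-$\ell$ pictures match, and the uniqueness of good representatives from \S1 (Proposition~\ref{prop:goodrep}, Lemma~\ref{le:goodrepresentative}) is what makes the comparison with $\tRgc$ go through; the hypothesis that $p$ is good enters only via the results quoted from \cite{Du4}.
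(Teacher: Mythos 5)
Your proposal matches the paper's treatment: the theorem is deduced directly from \cite[Theorems 3.9 and 3.11]{Du4} once Theorem~\ref{thm:hlm} removes the remaining obstruction in types $E_7$ and $E_8$, the torsion-freeness of the block part of the $\mathcal{O}$-cohomology coming from the same source. The paper gives no further argument, and your additional elaboration (goodness of $\Upsilon/(G\times T^{\opp})$, Proposition~\ref{prop:goodrep} and the open--closed triangle along $\Y_\G(\U)_\ell$ to pin down the homotopy-category identification with $\tRgc$) is consistent with the machinery of \S 1--2.
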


\sk

\noindent \textbf{\thesubsection.2 Determination of the Brauer trees.} We shall now give a proof of the conjecture of Hi\ss-L\"ubeck-Malle. For that purpose, we shall use Corollary \ref{corsyz} and the results in \cite{Du4} to compute the syzygies of the trivial module. 
By \cite[Proposition 2.12]{Du3}, the generalised $q^{m_\zeta \delta}$-eigenspace of $\sigma = \dot v F^\delta$ on $\tRgc(\X_\G(\B),k)$ is quasi-isomorphic to a complex concentrated in  degree $r$. Moreover, it has no projective  indecomposable summand since it can be lifted up to an $\mathcal{O}$-free $\mathcal{O} G$-module of character $\chi_{m_\zeta}$. Consequently we can apply Corollary \ref{corsyz} to obtain the following isomorphism of $kG$-modules
\begin{equation}\label{eigenspace} {_{q^{m_\zeta\delta}}\Hc^{r}}(\X_\G(\B),k) \, \simeq \, \Omega^{2m_\zeta-r} \, k. 
\end{equation}
\indent The principal $\ell$-block is self-dual,  the dual of the character $\chi_{m_\zeta}$ is $\chi_{m_{\zeta^{-1}}}$ and  the dual of the module $\Omega^{2m_\zeta-r} \, k$ is $\Omega^{r-2m_\zeta} \, k$. In particular, if we apply (\ref{eigenspace}) to $\zeta$ and $\zeta^{-1}$ we deduce that both $\Omega^{2m_\zeta - r} \, k$ and $\Omega^{r-2m_{\zeta^{-1}}} \, k$ can be lifted  up to  $\mathcal{O}$-free $\mathcal{O} G$-modules with character $\chi_{m_\zeta}$. In order to compare the positions of these two modules in  Green's walk we will use the following relation:

\begin{lem} With the notation in \hyperref[sec31]{\S \ref{sec31}.1}, we have $ m_{\zeta^{-1}} + M_\zeta \, \equiv \, r \ \mathrm{mod} \ h$.
\end{lem}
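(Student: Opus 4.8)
The relation should fall out of Poincar\'e duality on the Coxeter variety $\X:=\X_\G(\B)$, combined with the self-duality of the principal $\Phi_h$-block and Lusztig's description of the $\sigma$-action on $\Hc^\ast(\X,K)$ recalled in \hyperref[sec31]{\S \ref{sec31}.1}. Since $\X$ is smooth of pure dimension $r$, Poincar\'e--Verdier duality gives a perfect $G$-equivariant pairing $\Hc^i(\X,K)\times H^{2r-i}(\X,K)\to K(-r)$, compatible with the endomorphism $\sigma=\dot vF^\delta$ up to the Tate twist $K(-r)$, on which $\sigma$ acts by $q^{r\delta}$; concretely, if a simple $KG$-module occurs in $\Hc^i(\X,K)$ as the $\mu$-eigenspace of $\sigma$, then its contragredient occurs in $H^{2r-i}(\X,K)$ as the $q^{r\delta}\mu^{-1}$-eigenspace of $\sigma$.

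The first step is to restate \hyperref[sec31]{\S \ref{sec31}.1} in the required form: $\chi_{m_\zeta+t}$ occurs in $\Hc^\ast(\X,K)$ only in degree $r+t$, with $\sigma$-eigenvalue $\varepsilon_\zeta\,q^{t\delta}$ for a root of unity $\varepsilon_\zeta$ depending only on the Harish-Chandra series, and the normalisation that $\sigma$ acts on $\chi_j$ by an element congruent to $q^{j\delta}$ modulo a fixed prime above $\ell$ forces $\varepsilon_\zeta\equiv q^{m_\zeta\delta}$ there. Next, as the principal block is self-dual, taking contragredients is an automorphism of its Brauer tree fixing the exceptional vertex and permuting the Harish-Chandra branches of \hyperref[sec31]{\S \ref{sec31}.1}. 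Since $\chi_{m_\zeta}$, and hence $\chi_{m_\zeta}^\vee$, sits in the lowest non-vanishing degree $r$, this automorphism matches $\chi_{m_\zeta}$ with $\chi_{m_{\zeta^{-1}}}$ and so sends the $\zeta$-branch onto the $\zeta^{-1}$-branch respecting the numbering: $\chi_{m_\zeta+t}^\vee=\chi_{m_{\zeta^{-1}}+t}$ for $0\le t\le M_\zeta-m_\zeta$, whence $M_\zeta-m_\zeta=M_{\zeta^{-1}}-m_{\zeta^{-1}}$ and $\chi_{M_\zeta}^\vee=\chi_{M_{\zeta^{-1}}}$.

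Finally one applies the duality to the far end of the $\zeta$-branch. The module $\chi_{M_\zeta}$ occurs in $\Hc^{\,r+(M_\zeta-m_\zeta)}(\X,K)$ with $\sigma$-eigenvalue $\varepsilon_\zeta\,q^{(M_\zeta-m_\zeta)\delta}$, so $\chi_{M_{\zeta^{-1}}}=\chi_{M_\zeta}^\vee$ occurs in $H^{\,r-(M_\zeta-m_\zeta)}(\X,K)$ with $\sigma$-eigenvalue $\varepsilon_\zeta^{-1}\,q^{(r-M_\zeta+m_\zeta)\delta}$. Feeding in Lusztig's description of the $\sigma$-action on the ordinary cohomology $H^\ast(\X,K)$ — which, unlike the mod-$\ell$ normalisation, is not simply a copy of the one on $\Hc^\ast$ — one expresses the same eigenvalue through $M_{\zeta^{-1}}$ and $m_{\zeta^{-1}}$, and comparing the two expressions, using $\varepsilon_\zeta\equiv q^{m_\zeta\delta}$ and $\varepsilon_{\zeta^{-1}}\equiv q^{m_{\zeta^{-1}}\delta}$ and reducing modulo $\ell$, yields $m_{\zeta^{-1}}+M_\zeta\equiv r\pmod h$; the modulus is $h$ rather than $h_0$ precisely because the comparison runs through the full Frobenius, whose reduction has multiplicative order $h$. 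I expect the genuine difficulty to be exactly this last reconciliation — controlling the eigenvalues of $\sigma$ on $H^\ast(\X,K)$, the root-of-unity factors $\varepsilon_\zeta$, and the way duality matches the two branches — so that one is forced to invoke the precise cohomological computations of \cite{Lu} and \cite{Du3} and not merely the combinatorial table of \hyperref[sec31]{\S \ref{sec31}.1}.
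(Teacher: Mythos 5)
Your endgame coincides with the paper's: once one knows that the $\sigma$-eigenvalue on $\chi_{m_\zeta+t}$ is $\zeta q^{\delta(a/2+t)}$ with the exponent interval symmetric about $r/2$ (so $a+M_\zeta-m_\zeta=r$) and that the same integer $a$ serves for the conjugate series, multiplying the two congruences $\zeta q^{a\delta/2}\equiv q^{m_\zeta\delta}$ and $\zeta^{-1}q^{a\delta/2}\equiv q^{m_{\zeta^{-1}}\delta}$ modulo $\ell$ gives $a\equiv m_\zeta+m_{\zeta^{-1}}\ \mathrm{mod}\ h$ and the lemma follows. That is exactly the computation in the paper, which obtains the symmetry and the invariance of $a$ under $\zeta\mapsto\zeta^{-1}$ by simply quoting Lusztig's Table 7.1.

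The gap is in how you propose to establish that symmetry. You derive from Poincar\'e--Verdier duality that $\chi_{M_\zeta}^\vee$ occurs in the \emph{ordinary} cohomology $H^{r-(M_\zeta-m_\zeta)}(\X_\G(\B),K)$ with eigenvalue $\varepsilon_\zeta^{-1}q^{(r-M_\zeta+m_\zeta)\delta}$, and then say one should "express the same eigenvalue through $M_{\zeta^{-1}}$ and $m_{\zeta^{-1}}$". But $m_{\zeta^{-1}}$ and $M_{\zeta^{-1}}$ are defined by where the characters of the $\zeta^{-1}$-series sit in $\Hc^\ast$, and nothing recalled in \hyperref[sec31]{\S \ref{sec31}.1} tells you where any character sits in $H^\ast$ or what $\sigma$-eigenvalue it carries there; the Coxeter variety is affine and not proper, so $H^\ast$ and $\Hc^\ast$ carry genuinely different information. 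This is precisely the step you flag as "the genuine difficulty", and it is never carried out, so the congruence is asserted rather than proved. The same missing input undermines the intermediate claim that block self-duality matches the branches "respecting the numbering" ($\chi_{m_\zeta+t}^\vee=\chi_{m_{\zeta^{-1}}+t}$): to know that the dual of the character in degree $r+t$ of the $\zeta$-series is the character in degree $r+t$ of the $\zeta^{-1}$-series, one again needs the explicit degree-and-eigenvalue bookkeeping of \cite{Lu}. The paper's proof avoids the detour through $H^\ast$ entirely: Table 7.1 of \cite{Lu} already records, for $\Hc^\ast$, both the eigenvalue formula $\zeta q^{\delta(a/2+t)}$ with $a+M_\zeta-m_\zeta=r$ and the fact that $a$ depends only on the length of the series (hence is unchanged for $\zeta^{-1}$). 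To repair your argument you would either have to import those same facts (making the duality discussion redundant) or independently compute the $G\times\langle\sigma\rangle$-module structure of $H^\ast(\X_\G(\B),K)$, which is not easier than what you are trying to prove.
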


\begin{proof} Recall from \cite[Table 7.1]{Lu} that the eigenvalues of $\sigma$ on the series associated with $\zeta$ are $\zeta q^{a\delta/2}, \zeta q^{\delta(a/2+1)} , \ldots, \zeta q^{\delta(a/2+M_\zeta-m_\zeta)}$ for some integer $a$. Moreover, the interval $\{a/2, \ldots, a/2 + M_\zeta -m_\zeta\}$ is centred at $r/2$, so that $ a + M_\zeta -m_\zeta \, = \,  r.$ Finally, we observe that $a$ does not change if we replace $\zeta$ by its conjugate $\zeta^{-1}$. Since $\zeta q^{a\delta/2}$ (resp. $\zeta^{-1} q^{a\delta/2}$) is equal in $k$ to $q^{\delta m_\zeta}$ (resp. $q^{\delta m_{\zeta^{-1}}})$  and $q^\delta$ has order $h$ modulo $\ell$, we deduce that $a \equiv m_\zeta + m_{\zeta^{-1}} \ \mathrm{mod} \ h$ and we conclude the proof using the previous equality.
\end{proof}

Let $1 = \zeta_1, \ldots, \zeta_s$ be the roots of unity  that  appear in the eigenvalues of $\sigma = \dot  v F^{\delta}$ on $\Hc^\bullet(\X_\G(\B),K)$, ordered according to increasing value of $m_{\zeta}$. From the lemma we  compute $ r-2m_{\zeta^{-1}} - (2m_{\zeta}-r) \, \equiv \, 2(M_{\zeta}-m_{\zeta}) \ \mathrm{mod}\ 2h$
and $2m_{\zeta_{i+1}}-r-(r-2m_{\zeta_i^{-1}}) \equiv 2 \ \mathrm{mod} \ 2h$ since $m_{\zeta_{i+1}} = M_{\zeta_i}+1$ and $m_{\zeta_1} = r$. We deduce that  Green's walk starting from the trivial module satisfies the following pattern
\begin{equation} \label{walk}
\xymatrix@C=10mm{1_G \ar[r]^{+r} & \chi_{m_{\zeta_1}} \ar[r]^{+2}   & \chi_{m_{\zeta_2}} \ar[rr]^{+2(M_{\zeta_2} - m_{\zeta_2})} & & \chi_{m_{\zeta_2}}  \ar[r]^{+2} & \chi_{m_{\zeta_3}} \ar[rr]^{+2(M_{\zeta_3} - m_{\zeta_3})} & & \cdots }
\end{equation}
\noindent The following consequence will be helpful in determining the Brauer tree:

\begin{lem}\label{lemwalk}During  Green's walk from $k$ to $\Omega^{2h} k \simeq k$, the first occurrence of a character in a Harish-Chandra series associated with $\zeta \neq 1$ is  $\chi_{m_\zeta}$.
\end{lem}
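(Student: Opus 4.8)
The plan is to read the statement off the walk pattern (\ref{walk}) once one knows which characters occur along each of its pieces. Two ingredients are needed. First, by \cite{Ge3} the $\zeta$-series $\chi_{m_\zeta},\chi_{m_\zeta+1},\dots,\chi_{M_\zeta}$ is a subpath $P_\zeta$ of the Brauer tree $\Gamma$ in which $\chi_{m_\zeta+1}$ is the unique neighbour of $\chi_{m_\zeta}$ lying in $P_\zeta$, and the paths $P_\zeta$ for distinct $\zeta$ are pairwise disjoint. Second, from (\ref{eigenspace}) applied to $\zeta$ and to $\zeta^{-1}$ (as recorded just before (\ref{walk})), both $\Omega^{2m_\zeta-r}k$ and $\Omega^{r-2m_{\zeta^{-1}}}k$ have character $\chi_{m_\zeta}$; since $m_{\zeta^{-1}}+M_\zeta\equiv r\pmod h$ the second exponent equals $2M_\zeta-r$, so in Green's walk from $k$ the character $\chi_{m_\zeta}$ occurs at exactly the two positions $2m_\zeta-r$ and $2M_\zeta-r$ modulo $2h$.

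The core step is to identify the stretch of the walk running between these two occurrences. Since $\Omega^i k$ and $\Omega^{i+1}k$ have characters adjacent in $\Gamma$ (or one of them is $\chi_{\mathrm{exc}}$), and since over a full period each edge of $\Gamma$ is crossed exactly twice — so the $M_\zeta-m_\zeta$ internal edges of $P_\zeta$ contribute exactly $2(M_\zeta-m_\zeta)$ steps — the stretch of length $2(M_\zeta-m_\zeta)$ from position $2m_\zeta-r$ to position $2M_\zeta-r$ must be precisely the passage that leaves $\chi_{m_\zeta}$, runs out along $P_\zeta$ to its far end $\chi_{M_\zeta}$, and comes back. Hence this stretch visits exactly the vertices of $P_\zeta$, it begins at $\chi_{m_\zeta}$, and every other $\zeta$-series character occurs strictly in its interior. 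Combining this with (\ref{walk}) — in which, for $\zeta\neq 1$, the walk reaches $P_\zeta$ from the side of the exceptional vertex — shows that the walk first meets the $\zeta$-series at the beginning of this stretch, \ie\ at $\chi_{m_\zeta}$, which is the assertion of the lemma. (For $\zeta=1$ the walk starts at $1_G=\Omega^0 k$, which lies on $P_1$ but is not $\chi_{m_1}$; this is why the hypothesis $\zeta\neq 1$ is needed, and one checks separately that $P_1$ is run through only during the initial stretch of (\ref{walk}) and not revisited, so it does not interfere with the other series.)

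The delicate point is exactly the one used implicitly above: that the two occurrences of $\chi_{m_\zeta}$ genuinely bound a single excursion through $P_\zeta$, rather than the walk leaving $P_\zeta$, wandering elsewhere, and returning to $\chi_{m_\zeta}$ before the $\zeta$-series is exhausted — equivalently, that the successive series are visited in disjoint blocks of the walk separated by a single step across the exceptional vertex, which is the congruence $2m_{\zeta_{i+1}}-r-(r-2m_{\zeta_i^{-1}})\equiv 2\pmod{2h}$ already encoded in (\ref{walk}). Establishing this cleanly rests on the edge-count over the period together with the path structure of the $P_\zeta$; for the types where the shape of $\Gamma$ is not yet known, namely $E_7$ and $E_8$, it is intertwined with the determination of the Brauer tree carried out in the remainder of this subsection.
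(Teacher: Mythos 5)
There is a genuine gap, and it sits exactly where you flag the ``delicate point'': your core step is asserted, not proved, and the justification you offer does not establish it. Knowing that the edges of $P_\zeta$ are crossed $2(M_\zeta-m_\zeta)$ times \emph{over the whole period} does not force those crossings to be the $2(M_\zeta-m_\zeta)$ consecutive steps between positions $2m_\zeta-r$ and $2M_\zeta-r$: a priori the walk could arrive at $\chi_{m_\zeta}$ \emph{along} $P_\zeta$ (so that some $\chi_j$ with $j\neq m_\zeta$ is met first), spend the stretch of length $2(M_\zeta-m_\zeta)$ exploring off-series branches at $\chi_{m_\zeta}$ of the right total size, and only afterwards walk back out along $P_\zeta$. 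The congruences in (\ref{walk}) record positions of the $\chi_{m_{\zeta_i}}$ only; they do not already ``encode'' that the series are visited in disjoint excursions separated by single steps (the exceptional vertex does not even appear in (\ref{walk})). Your closing suggestion that the point is ``intertwined with the determination of the Brauer tree carried out in the remainder of this subsection'' is circular, since that determination uses Lemma \ref{lemwalk}. (A smaller overstatement: you claim $\chi_{m_\zeta}$ occurs at \emph{exactly} two positions, whereas at this stage one only knows it occurs at least there; this does not affect the argument but should not be asserted.)

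The missing argument is short and is the one the paper gives. Between two occurrences of a vertex, Green's walk is a closed walk based at that vertex and therefore explores \emph{complete} branches of the tree there. Hence if a character from a series $\xi\neq\zeta$ occurred strictly between the two occurrences of $\chi_{m_\zeta}$ at positions $2m_\zeta-r$ and $2M_\zeta-r$, then the whole of $P_\xi$ --- a connected path not containing $\chi_{m_\zeta}$, hence contained in a single branch at $\chi_{m_\zeta}$ --- would be traversed in between, so in particular $\chi_{m_\xi}$ would occur strictly between those two positions. This contradicts (\ref{walk}), which places the occurrences of $\chi_{m_\xi}$ in an interval disjoint from $(2m_\zeta-r,\,2M_\zeta-r)$ modulo $2h$. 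This rules out the bad scenario above and yields your core step; with that supplied, the rest of your reasoning goes through.
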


\begin{proof} If $\chi_{m_\zeta}$ is not the first character of the $\zeta$-series encountered in a Green walk, then between any two occurrences of $\chi_{m_\zeta}$, at least one character from a different series must occur. Let $\xi \neq \zeta$ be the corresponding root of unity. By the results recalled in  \hyperref[sec31]{\S \ref{sec31}.1}, every character in the block lying in this series will also occur. In particular, since the Brauer tree is a tree, any occurrence of $\chi_{m_{\xi}}$ will be found between these two occurrences of  $\chi_{m_\zeta}$, which contradicts (\ref{walk}). 
\end{proof}

 We claim that this information together with the results in \hyperref[sec31]{\S \ref{sec31}.1} is enough to determine the Brauer tree. We will only examine the case of the $\zeta_1$-series (the principal series) and the $\zeta_2$-series  as the other cases are similar. Since the distance between $\chi_{m_{\zeta_1}} = \mathrm{St}_G$ and $\chi_0 = \mathrm{1}_G$ is equal to $r$, which is also the length of the principal series,   a character is connected to the principal series in the Brauer tree if and only if it is connected to the Steinberg character:
 
\centers{ \begin{pspicture}(12,2.5)
  \psset{linewidth=1pt}

  \cnode(2.5,2){0pt}{I2}
  \cnode(2.5,0.4){0pt}{J2}
    \cnode(1.5,2){0pt}{K2}  
  \cnode(1.5,0.4){0pt}{L2}    
   \cnode(1.05,1.2){0pt}{M2}

  \cnode(2,1.2){5pt}{B}
  \cnode(4,1.2){5pt}{C}
  \cnode(8,1.2){5pt}{D}
  \cnode(10,1.2){5pt}{E}
  \ncline{B}{C} \naput[npos=-0.1]{$\vphantom{\Bigg(}\mathrm{St}$} \naput[npos=1.1]{$\vphantom{\Big(}\chi_{m_{\zeta_1}+1}$}
  \ncline[linestyle=dashed]{C}{D}
  \ncline{D}{E}  \naput[npos=-0.1]{$\vphantom{\Big(}\chi_{M_{\zeta_1}-1}$} \naput[npos=1.1]{$\vphantom{\Big(}\mathrm{1}_G$}
    \ncline[linestyle=dashed]{B}{M2}
    \ncline[linestyle=dashed]{B}{I2}
    \ncline[linestyle=dashed]{B}{J2}
    \ncline[linestyle=dashed]{B}{K2}
    \ncline[linestyle=dashed]{B}{L2}
\end{pspicture}}

\noindent By (\ref{walk}), we know that $\chi_{m_{\zeta_2}}$ is two steps further the first occurrence of the Steinberg. If the $(r+2)$-th vertex $\chi$ in the walk is not the exceptional one then we are in the following situation

\centers{ \begin{pspicture}(12,5.5)
  \psset{linewidth=1pt}

  \cnode(3,2.8){5pt}{I2} 
  \cnode(2.05,2.8){0pt}{X2} 
    \cnode(2.5,3.6){0pt}{Y2} 
    \cnode(4,4.4){5pt}{I3}
     \cnode(3.05,4.4){0pt}{X3} 
     \cnode(4.95,4.4){0pt}{Z3} 
     \cnode(3.5,5.2){0pt}{Y3} 
    \cnode(4.5,5.2){0pt}{V3}
    \cnode(4.5,3.6){0pt}{U3} 
  \cnode(2.5,0.4){0pt}{J2}
    \cnode(1.5,2){0pt}{K2}  
  \cnode(1.5,0.4){0pt}{L2}  
     \cnode(1.05,1.2){0pt}{M2}

  \cnode(2,1.2){5pt}{B}
  \cnode(4,1.2){5pt}{C}
  \cnode(8,1.2){5pt}{D}
  \cnode(10,1.2){5pt}{E}
  \ncline{B}{C} \naput[npos=-0.1]{$\vphantom{\Bigg(}\mathrm{St}$}  \naput[npos=1.1]{$\vphantom{\Big(}\chi_{m_{\zeta_1}+1}$}
  \ncline[linestyle=dashed]{C}{D}
  \ncline{D}{E}  \naput[npos=-0.1]{$\vphantom{\Big(}\chi_{M_{\zeta_1}-1}$} \naput[npos=1.1]{$\vphantom{\Big(}\mathrm{1}_G$}
    \ncline[linestyle=dashed]{B}{M2}
    \ncline{B}{I2} \ncput[npos=1.15]{$\phantom{\Big(aaa}\chi$}
    \ncline{I2}{I3} \ncput[npos=1.45]{$\phantom{\Big(aaaaa}\chi_{m_{\zeta_2}}$}
    \ncline[linestyle=dashed]{I2}{X2}
    \ncline[linestyle=dashed]{I2}{Y2}
    \ncline[linestyle=dashed]{I3}{X3}
    \ncline[linestyle=dashed]{I3}{Y3}
    \ncline[linestyle=dashed]{I3}{Z3}
    \ncline[linestyle=dashed]{I3}{V3}
    \ncline[linestyle=dashed]{I3}{U3}     
    \ncline[linestyle=dashed]{B}{J2}
    \ncline[linestyle=dashed]{B}{K2}
    \ncline[linestyle=dashed]{B}{L2}
\end{pspicture}}

\noindent Here $\chi$ is a non-exceptional character lying in a Harish-Chandra series associated with some root of unity $\zeta$. But by parity $\chi$ cannot be equal to $\chi_{m_\zeta}$, which contradicts Lemma \ref{lemwalk}.

\sk

Therefore $\chi_{m_{\zeta_2}}$  is connected to the exceptional node and the Brauer tree has the following shape:

\centers{ \begin{pspicture}(10,4)
  \psset{linewidth=1pt}

  \cnode[fillstyle=solid,fillcolor=black](0,1.2){5pt}{A2}
  \cnode(0.5,0.4){0pt}{J}
    \cnode(-0.5,2){0pt}{K}  
  \cnode(-0.5,0.4){0pt}{L}    
    \cnode(-0.95,1.2){0pt}{M}    

    \cnode(1,2.8){5pt}{I3}
     \cnode(0.05,2.8){0pt}{X3} 
     \cnode(1.95,2.8){0pt}{Z3} 
     \cnode(0.5,3.6){0pt}{Y3} 
    \cnode(1.5,3.6){0pt}{V3}
    \cnode(1.5,2){0pt}{U3} 
 
  \cnode(0,1.2){8pt}{A}
  \cnode(2,1.2){5pt}{B}
  \cnode(4,1.2){5pt}{C}
  \cnode(8,1.2){5pt}{D}
  \cnode(10,1.2){5pt}{E}
  \ncline{A}{B} \naput[npos=1.1]{$\vphantom{\Big(}\mathrm{St}$}
  \ncline{B}{C}  \naput[npos=1.1]{$\vphantom{\Big(}\chi_{m_{\zeta_1}+1}$}
  \ncline[linestyle=dashed]{C}{D}
  \ncline{D}{E}  \naput[npos=-0.1]{$\vphantom{\Big(}\chi_{M_{\zeta_1}-1}$} \naput[npos=1.1]{$\vphantom{\Big(}\mathrm{1}_G$}
    \ncline[linestyle=dashed]{A}{J}
    \ncline[linestyle=dashed]{A}{K}
    \ncline[linestyle=dashed]{A}{L}
    \ncline[linestyle=dashed]{A}{M}
    \ncline{A}{I3} \ncput[npos=1.45]{$\phantom{\Big(aaaaa}\chi_{m_{\zeta_2}}$}
    \ncline[linestyle=dashed]{I3}{X3}
    \ncline[linestyle=dashed]{I3}{Y3}
    \ncline[linestyle=dashed]{I3}{Z3}
    \ncline[linestyle=dashed]{I3}{V3}
    \ncline[linestyle=dashed]{I3}{U3}     

\end{pspicture}}

\noindent Finally, since the branch corresponding to the $\zeta_2$-series has $M_{\zeta_2}-m_{\zeta_2}$ edges, we deduce from (\ref{walk}) that the Brauer tree has the following shape

\centers{ \begin{pspicture}(11,4)
  \psset{linewidth=1pt}

  \cnode[fillstyle=solid,fillcolor=black](0,1.2){5pt}{A2}
  \cnode(0.5,0.4){0pt}{J}
    \cnode(-0.5,2){0pt}{K}  
  \cnode(-0.5,0.4){0pt}{L}    
    \cnode(-0.95,1.2){0pt}{M}    

    \cnode(1,2.8){5pt}{I3}
     \cnode(0.05,2.8){0pt}{X3} 
     \cnode(3,2.8){5pt}{Z3} 
     \cnode(0.5,3.6){0pt}{Y3} 
    \cnode(1.5,3.6){0pt}{V3}
  \cnode(7,2.8){5pt}{D3}
  \cnode(9,2.8){5pt}{E3}
 
  \cnode(0,1.2){8pt}{A}
  \cnode(2,1.2){5pt}{B}
  \cnode(4,1.2){5pt}{C}
  \cnode(8,1.2){5pt}{D}
  \cnode(10,1.2){5pt}{E}
  \ncline{A}{B} \naput[npos=1.1]{$\vphantom{\Big(}\mathrm{St}$}
  \ncline{B}{C}  \naput[npos=1.1]{$\vphantom{\Big(}\chi_{m_{\zeta_1}+1}$}
  \ncline[linestyle=dashed]{C}{D}
  \ncline{D}{E}  \naput[npos=-0.1]{$\vphantom{\Big(}\chi_{M_{\zeta_1}-1}$} \naput[npos=1.1]{$\vphantom{\Big(}\mathrm{1}_G$}
\ncline{I3}{Z3}  \naput[npos=1.5]{$\vphantom{\Big(}\chi_{m_{\zeta_2}+1}$}
  \ncline[linestyle=dashed]{Z3}{D3}
  \ncline{D3}{E3}  \naput[npos=-0.1]{$\vphantom{\Big(}\chi_{M_{\zeta_2}-1}$} \naput[npos=1.1]{$\vphantom{\Big(}\chi_{M_{\zeta_2}}$}
    \ncline[linestyle=dashed]{A}{J}
    \ncline[linestyle=dashed]{A}{K}
    \ncline[linestyle=dashed]{A}{L}
    \ncline[linestyle=dashed]{A}{M}
    \ncline{A}{I3} \ncput[npos=1.6]{$\phantom{\Big(aaaaa}\chi_{m_{\zeta_2}}$}
    \ncline[linestyle=dashed]{I3}{X3}
    \ncline[linestyle=dashed]{I3}{Y3}
    \ncline[linestyle=dashed]{I3}{V3}

\end{pspicture}}

\noindent It remains to iterate the process to obtain the planar embedded Brauer tree predicted by the conjecture of Hi\ss-L\"ubeck-Malle. This completes the proof of Theorem \ref{thm:hlm}.

\subsection{Non-unipotent  \texorpdfstring{$\ell$}{l}-blocks}

We keep the assumption on $\ell$ given in \S \ref{sect:phih} and we fix a pair $(\G^*,\T^*, F^*)$  dual to $(\G,\T, F)$.  Throughout this section, we will assume that $\G$ is adjoint, so that the centraliser of any semisimple element in $\G^*$ is connected. Let $s \in G^*$ be a semisimple $\ell'$-element. Following \cite{BMi1}, we denote by $\mathcal{E}_\ell(G,(s)) $ the union of  rational series $\mathcal{E}(G,(st))$ where $t$ runs over a set of representatives of conjugacy classes of semisimple $\ell$-elements in $C_{G^*}(s)$. By \cite[Th\'eor\`eme 2.2]{BMi1} and \cite[\S 3.2]{CaEn2}, it is a union of blocks that have either trivial or full defect. Furthermore, if $\mathcal{E}_\ell(G,(s)) $ contains a non-trivial block, then $s$ must be conjugate to an element of $C_{G^*}(S_\ell^*) = T^*$, where $S_\ell^*$ denotes the Sylow $\ell$-subgroup of $T^* = (\T^{*})^{F^*}$. Note that by \cite[Th\'eor\`eme 3.2]{BMi1}, the principal block is the only unipotent block with non-trivial defect.
 
 \sk

We assume now that $s$ is an $\ell'$-element of $T^*$ such that $\mathcal{E}_\ell(G,(s))$ contains a non-trivial block $b$. If there is a proper $F^*$-stable Levi subgroup $\L^*$ of $\G^*$ such that $C_{\G^*}(s) \subset \L^*$, then by \cite[Th\'eor\`eme B']{BR1} the block $b$ is Morita equivalent to an $\ell$-block $b_s$ in $\mathcal{E}_\ell(L,(s))$. Now $[\L^*,\L^*]$ is adjoint and two cases can arise: if $\ell \nmid |Z(\L^*)^F|$ then $[\L^*,\L^*]$ has a minimal $F$-stable connected normal subgroup that contains $s$. It is also the unique one whose Coxeter number is $h$. If $S_\ell^*$ is central in $L^*$ then $b_s$ is isomorphic to a unipotent block of $L$ and the results in \S \ref{sect:phih} apply and the Brauer tree of the block is known by induction on the rank of $\G$.

\sk

If such a proper Levi subgroup $\L^*$ does not exist then  $s$ and by extension $b$ are said to be \emph{isolated}.  Under our assumptions on $\ell$, very few isolated blocks with non-trivial defect can appear in exceptional adjoint groups:

\begin{lem} Let $(\G,F)$ be an exceptional adjoint simple group not of type $G_2$. Under the assumptions on $\ell$ in \S \ref{sect:phih}, any isolated $\ell$-block has trivial defect.
\end{lem}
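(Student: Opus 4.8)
The plan is to argue by contradiction, reducing the question to a property of the centraliser $C_{\G^*}(s)$ and then to a finite inspection in the exceptional types. Suppose $b\subseteq\mathcal{E}_\ell(G,(s))$ is an isolated block with non-trivial defect, where $s\in G^*$ is an isolated semisimple $\ell'$-element. Since $b$ has non-trivial defect, the remark recalled just before the lemma shows that $s$ is $\G^{*F^*}$-conjugate into $T^*=(\T^*)^{F^*}$, so we may assume $s\in T^*$. As $\G$ is adjoint, $C:=C_{\G^*}(s)$ is connected, and since the algebraic torus $\T^*$ is abelian we have $\T^*\subseteq C$; hence $T^*\subseteq C^{F^*}$ and, $S_\ell^*$ being contained in $T^*$, we get $\ell\mid|C^{F^*}|$. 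Under the hypotheses of \S\ref{sect:phih} the image of $q$ in $k$ has order the Coxeter number $h$ (with the analogous condition for ${}^2F_4$), so $h\mid\ell-1$, $\ell>h$, and $\ell$ divides $\Phi_d(q)$ for no $d\le h$ other than $d=h$; thus $\ell\mid|C^{F^*}|$ forces $\Phi_h$ to occur in the (very twisted) polynomial order $\mathbf{O}_{C,F^*}$ of $C$, in the sense of the proof of Lemma \ref{lem:cyclic}.

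Next I would use that $s$ is isolated, i.e.\ $Z(C)^\circ=Z(\G^*)^\circ=\{1\}$: then $C$ is semisimple of rank equal to that of $\G^*$, an almost-direct product of simple groups $C_1,\dots,C_r$, and the condition that $\Phi_h$ divides $|C|$ means $h$ divides a degree of some $W(C_i)$, hence — degrees being positive — $h$ is itself a degree of $W(C_i)$, so the Coxeter number of $C_i$ is at least $h$. The crux is then to invoke the Borel--de Siebenthal classification of maximal-rank connected reductive subgroups of the (simply connected) exceptional group $\G^*$ — equivalently the pseudo-Levi subgroups obtained by iterated deletion of nodes from the extended Dynkin diagram — and to check, type by type, that every irreducible component of a \emph{proper} such subgroup has Coxeter number strictly smaller than $h$. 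The largest such component is $E_7\subset E_8$ (Coxeter number $18<30$); in every other case, and for the twisted forms ${}^2E_6$, ${}^2F_4$ (and ${}^3D_4$, if included in the statement's scope), read off via the twisted polynomial orders, the components are even smaller relative to $h$. It follows that no proper maximal-rank semisimple subgroup of $\G^*$ has $\Phi_h$ in its polynomial order, so necessarily $C=\G^*$, i.e.\ $s\in Z(\G^*)$.

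It remains to dispose of $s\in Z(\G^*)$. If $\G$ has type $F_4$ or $E_8$ (or $G_2$, which is excluded) then $Z(\G^*)$ is trivial, so $s=1$ and $b$ is unipotent; but by \cite[Th\'eor\`eme 3.2]{BMi1} the only unipotent block of non-trivial defect is the principal block, treated in \S\ref{sect:phih}. If $\G$ has type $E_6$, ${}^2E_6$, $E_7$, ${}^2F_4$ or ${}^3D_4$, then $\ell$ (odd and $>h$) is prime to $|Z(\G^*)^{F^*}|$, a non-trivial $s\in Z(\G^*)^{F^*}$ yields a linear character $\widehat s$ of $G$, and $\mathcal{E}_\ell(G,(s))$ is precisely the twist by $\widehat s$ of the set of unipotent blocks; every block it contains is Morita equivalent to a unipotent block of $G$, hence has defect — and, when that defect is full, Brauer tree — governed by the principal block as in \S\ref{sect:phih}. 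In all cases no isolated block of non-trivial defect arises beyond those already understood, which proves the lemma. I expect the only genuinely laborious step to be the type-by-type inspection in the second paragraph (with the extra bookkeeping of twisted polynomial orders for ${}^2E_6$, ${}^2F_4$ and ${}^3D_4$); the remaining steps are formal.
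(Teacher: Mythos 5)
Your overall strategy coincides with the paper's: reduce to the statement that $\Phi_h(q)$ (or its twisted analogue $\Psi$) must divide the order of $C_{\G^*}(s)^{F^*}$, and then inspect the possible centralisers of isolated elements via the extended Dynkin diagram. The first and last paragraphs are essentially fine. The classification step in your second paragraph, however, uses the wrong criterion, and as a result asserts something that is false for types $E_6$ and ${}^2E_6$, which are within the scope of the lemma.

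The problem is that, writing $C=C_1\cdots C_r$, the group $C^{F^*}$ is not a product of split groups $C_i(q)$: the Frobenius $F^*$ may permute the components and induce diagram automorphisms on them, so an $F^*$-orbit of $m$ components of type $X$ contributes the order of a possibly twisted form ${}^tX(q^m)$. Hence "$\Phi_h$ divides the polynomial order of $(C,F^*)$" does \emph{not} imply "$h$ divides a degree of some $W(C_i)$". Concretely, in $E_6$ (where $h=12$) the pseudo-Levi $A_2\times A_2\times A_2$ can carry the rational form $A_2(q)\times{}^2\!A_2(q^2)$, whose order is divisible by $\Phi_6(q^2)=\Phi_{12}(q)$, and in ${}^2E_6$ (where $h=18$) it can carry the form ${}^2\!A_2(q^3)$, whose order is divisible by $\Phi_6(q^3)=\Phi_{18}(q)$ --- even though the Coxeter number of $A_2$ is $3$. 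So the type-by-type inspection, done correctly, is \emph{not} empty, and your conclusion $C=\G^*$ does not follow. The paper closes exactly this gap by appealing to the explicit classification of semisimple conjugacy classes (\cite[\S 2]{DeLi} and \cite[\S 2]{FlJa}) to show that these two rational forms never occur as centralisers of semisimple elements in the simply connected groups of type $E_6$ and ${}^2E_6$. Without that input, or some substitute for it, your argument does not go through in those two cases. (A smaller point: you allow twisted polynomial orders only when the ambient group is twisted, whereas the relevant twisting here comes from $F^*$ permuting the components of $C$ inside the \emph{split} group $E_6$ as well.)
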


\begin{proof} Let $\mathbf{M}^* = C_\G^*(s)$. It is a connected reductive subgroup of $\G^*$. If $\ell$ does not divide $[\mathbf{M}^*,\mathbf{M}^*]^{F^*}$,  then $S_\ell^*$ must be central in $\mathbf{M}^*$  since $\mathbf{M}^* = [\mathbf{M}^*,\mathbf{M}^*] \, Z(\mathbf{M}^*)$. Therefore $\mathbf{M}^* \subset  C_{\G^*}(S_\ell^*) = \T^*$. Consequently  $\ell$ divides the order of $[\mathbf{M}^*,\mathbf{M}^*]^{F^*}$ whenever $s$ is isolated.

\sk

Recall that the conjugacy classes of isolated elements are parametrised by roots in the extended Dynkin diagram (given for example in \cite{Bon2}). With the assumption on \S \ref{sect:phih}, there is a unique (tp)-cyclotomic polynomial $\Psi$ such that $\ell$ divides $\Psi(q)$ and $\Psi$ divides the polynomial order of $G$ (we have $\Psi = \Phi_h$ if we exclude the Ree and Suzuki groups). Therefore the group $[\mathbf{M}^*,\mathbf{M}^*]^{F^*}$ contains a non-trivial $\ell$-subgroup if and only if $\Psi$ appears in its polynomial order, that is if $[\mathbf{M}^*,\mathbf{M}^*]$ contains an $F^*$-stable component with the same Coxeter number as $(\G,F)$.  The Coxeter numbers for exceptional groups are given in Table \ref{table:coxeter}.

\begin{table}[h!]

\centers{$ \begin{array}{c|c|c|c|c|c|c|c|c|c|c}
(\G,F) & \vphantom{\Big(} {}^2 B_2 &  {}^3 D_4 & {}^2 E_6 & E_6 & E_7 & E_8 & {}^2 F_4 & F_4 & {}^2 G_2 & G_2 \\\hline
h & \vphantom{\Big(} 8 & 12 & 18 & 12 & 18 & 30 & 24 & 12 & 12 & 6 \\
 \end{array}$}

\caption{Coxeter numbers for exceptional groups}
 \label{table:coxeter}
\end{table}

\noindent Using the extended Dynkin diagram,  one can check that  the only centralisers of isolated elements that have the same Coxeter number are $A_2 \times A_2 \times A_2$ realised as ${}^2 A_2 (q^3)$ for ${}^2 E_6(q)$ and as $A_2(q) \times {}^2 A_2 (q^2)$ for $E_6$, and $A_2$ realised as ${}^2 A_2(q)$ for $G_2$. By \cite[\S 2]{DeLi} and \cite[\S 2]{FlJa}, the first two cases never happen for simply connected groups. 
\end{proof}

For classical groups, the $\ell$-blocks with cyclic defect have been determined in \cite{FoSr}. For $G_2$ they are given in \cite{Sha} (note that when $q\equiv -1$ modulo 3 there exists a non-trivial quasi-isolated block). As a consequence, the Jordan decomposition provides an inductive argument for determining all the $\ell$-blocks up to Morita equivalence.

\begin{thm} Assume  $\G$ is an adjoint simple group. In the Coxeter case, the Brauer tree of any non-trivial $\ell$-block of $G$ is known.
\end{thm}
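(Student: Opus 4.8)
The plan is to combine Theorem~\ref{thm:hlm}, which settles the principal $\Phi_h$-block, with Jordan decomposition of blocks and the reductions assembled in this section, running an induction on the semisimple rank of $\G$. First I would record that under the hypotheses of \S\ref{sect:phih} the Sylow $\ell$-subgroups of $G$ are cyclic, so a \emph{non-trivial} block is precisely a block of full defect with cyclic defect group; its planar embedded Brauer tree is what we must determine, and since it determines the module category up to Morita equivalence and, conversely, is read off from the projective indecomposable modules together with the ordinary lifts of the simple modules, it is preserved by any Morita equivalence (or isomorphism) of $\mathcal{O}$-blocks. Hence it suffices to identify each non-trivial block, up to Morita equivalence, with a block whose tree is already known.

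Every block of $G$ lies in a unique union $\mathcal{E}_\ell(G,(s))$ with $s$ a semisimple $\ell'$-element of $G^*$, and by \cite[Th\'eor\`eme 2.2]{BMi1} and \cite[\S 3.2]{CaEn2} such a union consists of blocks of trivial or full defect; a trivial-defect block is simple and its tree is a single edge, so I only need to treat a full-defect block $b\subseteq\mathcal{E}_\ell(G,(s))$, in which case $s$ may be taken in $T^*$. If $s$ is not isolated, there is a proper $F^*$-stable Levi subgroup $\L^*$ of $\G^*$ with $C_{\G^*}(s)\subseteq\L^*$, and \cite[Th\'eor\`eme B']{BR1} yields a Morita equivalence between $b$ and a block $b_s\in\mathcal{E}_\ell(L,(s))$ of the strictly smaller group $L$. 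As explained above, $[\L^*,\L^*]$ has a unique $F$-stable component with Coxeter number $h$, and when $S_\ell^*$ is central in $L^*$ the block $b_s$ becomes, after twisting by a linear character, a unipotent block --- in fact the principal $\Phi_h$-block --- of that component, to which Theorem~\ref{thm:hlm} applies; the remaining subcase, and the possibility that $S_\ell^*$ is non-central, are dealt with by applying the induction hypothesis to (the relevant simple factor of) $L$. Thus all non-isolated blocks are reduced.

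It remains to treat isolated $s$. The case $s=1$ is the principal block, covered by Theorem~\ref{thm:hlm}, and by \cite[Th\'eor\`eme 3.2]{BMi1} this is the only unipotent block of non-trivial defect. For a non-unipotent isolated block I split by type: if $\G$ is classical, every block of cyclic defect --- isolated or not --- is already determined in \cite{FoSr}; if $\G$ is of type $G_2$, all such blocks (including the quasi-isolated one occurring for $q\equiv-1\bmod 3$) are determined in \cite{Sha}; and if $\G$ is exceptional of type $\neq G_2$, the lemma above shows that every isolated $\ell$-block has trivial defect, so no new tree arises. Assembling these cases completes the proof. The main obstacle is the bookkeeping in the non-isolated reduction: one must check that the Levi $L$ produced by \cite{BR1}, together with $s$ and $b_s$, genuinely falls within the scope of the statement --- this involves passing to the derived group $[\L,\L]$ and its (a~priori non-simple, non-adjoint) structure, handling the dichotomy $\ell\mid|Z(\L^*)^F|$ versus $\ell\nmid|Z(\L^*)^F|$, verifying that the condition ``Coxeter number $=h$'' descends to the unique relevant component so that \S\ref{sect:phih} applies, and confirming that the induction on rank is well-founded; everything else is a finite case-check plus the citations \cite{FoSr} and \cite{Sha}.
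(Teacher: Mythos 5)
Your argument is essentially the paper's own: reduce non-isolated blocks to smaller groups via the Bonnaf\'e--Rouquier Morita equivalence and Jordan decomposition, dispose of isolated non-unipotent blocks in exceptional types $\neq G_2$ by the lemma showing they have trivial defect, quote \cite{FoSr} for classical groups and \cite{Sha} for $G_2$, and invoke Theorem \ref{thm:hlm} for the principal block. The bookkeeping issues you flag in the non-isolated reduction (the dichotomy on $\ell\mid|Z(\L^*)^F|$, descent to the component with Coxeter number $h$, well-foundedness of the induction) are left at the same level of detail in the paper itself, so the proposal matches both the strategy and its granularity.
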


\begin{rmk} Using \cite{DeLi} and \cite{Der}, one can check that for groups of type ${}^2 B_2$, ${}^3 D_4$, ${}^2 E_6$, $E_8$, ${}^2 F_4$ and ${}^2 G_2$, the order of the derived group of the centraliser of any semisimple element is coprime to $\ell$. Therefore any non-principal $\ell$-block will be either trivial or Morita equivalent to $\mathcal{O}S_\ell$. 
\end{rmk}

\subsection{New planar embedded Brauer trees}

We give here the new Brauer trees that we have obtained. Note that the shape of the trees for ${}^2 F_4$ and $F_4$ were already known by \cite{Hiss} and \cite{HL} but the planar embeddings was known for $F_4$ and $p \neq 2,3$ only (cf. \cite{Du4}). We have used the package \textsf{CHEVIE} of \textsf{GAP3} to label the irreducible unipotent characters with the convention that $1$, $\varepsilon$ and $r$ stand respectively for the trivial, the sign and the reflection representation of a Coxeter group.

\mk

\noindent \textbf{\thesubsection.1 Type ${}^2 F_4$.} Here $q = 2^m\sqrt2$ for some integer $m \geq 1$. The Coxeter case corresponds to prime numbers $\ell$ dividing $\Phi_{24}'(q) = q^4-\sqrt 2 q^3+q^2-\sqrt 2q+1$. The class of $q$ in $k$ is a primitive $24$-th root of unity. We have denoted by $\theta$ (resp. $\mathrm{i}$, resp. $\eta$) the unique primitive $3$-rd  (resp. $4$-th, resp. $8$-th) root of unity which is equal to $q^8$ (resp. $q^6$, resp. $q^3$) in $k$. The planar embedded Brauer tree of the principal $\ell$-block is given by Fig. \ref{2F4}.

\begin{figure}[h!] \hskip 5cm \begin{pspicture}(6,4.5)

  \cnode[fillstyle=solid,fillcolor=black](1.5,2){5pt}{A2}
    \cnode(1.5,2){8pt}{A}
  \cnode(3,2){5pt}{B}
  \cnode(4.5,2){5pt}{C}
  \cnode(6,2){5pt}{D}
  \cnode(0,2){5pt}{O}
  \cnode(2.56,3.06){5pt}{P}
  \cnode(1.5,3.5){5pt}{Q}
  \cnode(0.44,3.06){5pt}{R}  \cnode(-.62,4.12){5pt}{R2}
  \cnode(2.56,0.94){5pt}{S}
  \cnode(1.5,0.5){5pt}{T}
  \cnode(0.44,0.94){5pt}{U}\cnode(-.62,-0.12){5pt}{U2}

  \ncline[nodesep=0pt]{A}{B}\naput[npos=1.1]{$\vphantom{\big(_p} \mathrm{St}$}
  \ncline[nodesep=0pt]{B}{C}\naput[npos=1.1]{$\vphantom{\Big(}\phi_{2,1}$}
  \ncline[nodesep=0pt]{C}{D}\naput[npos=1.15]{$\vphantom{\big(_p}  1$}
  \ncline[nodesep=0pt]{O}{A}\ncput[npos=-1.2]{$\vphantom{\Big(} {}^2F_{4}^{\mathrm{II}}[-1] $}
 \ncline[nodesep=0pt]{A}{P} \ncput[npos=1.8]{${}^2 F_4[-\theta^2]$}
 \ncline[nodesep=0pt]{A}{Q}\ncput[npos=1.7]{${}^2 F_4^{\mathrm{II}}[\mathrm{i}]$}
 \ncline[nodesep=0pt]{A}{R}\ncput[npos=1.2]{${}^2 B_2[\eta^3]_\varepsilon$ \hphantom{aaaiaaaaaa}} 
  \ncline[nodesep=0pt]{R}{R2}\ncput[npos=1.2]{${}^2 B_2[\eta^3]_1$ \hphantom{aaaaaaaaa}} 

 \ncline[nodesep=0pt]{A}{S}\ncput[npos=1.8]{${}^2 F_4[-\theta]$}
 \ncline[nodesep=0pt]{A}{T}\ncput[npos=1.7]{${}^2 F_4^{\mathrm{II}}[-\mathrm{i}]$}
 \ncline[nodesep=0pt]{A}{U}\ncput[npos=1.2]{${}^2 B_2[\eta^5]_\varepsilon$  \hphantom{aaaaaaaaaa}} 
  \ncline[nodesep=0pt]{U}{U2}\ncput[npos=1.2]{${}^2 B_2[\eta^5]_1$  \hphantom{aaaaiaaaa}}

\end{pspicture}

\vskip 0.7cm

\caption{Brauer tree of the principal $\Phi_{24}'$-block of ${}^2F_4$}
\label{2F4}

\end{figure}
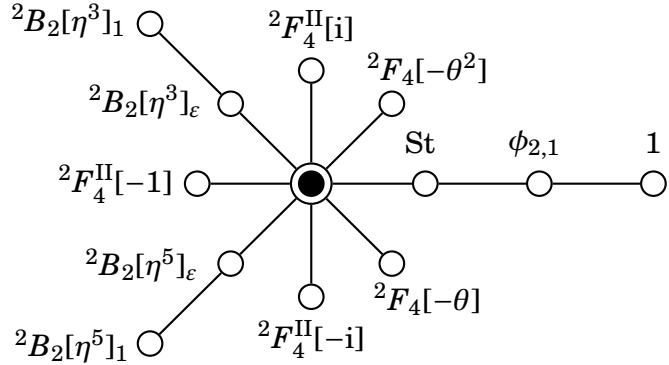

\mk

\noindent \textbf{\thesubsection.2 Type $F_4$.} The Coxeter case corresponds to prime numbers $\ell$ dividing $\Phi_{12}(q) = q^4-q^2 +1$. The class of $q$ in $k$ is a primitive $12$-th root of unity. We have denoted by $\theta$ (resp. $\mathrm{i}$) the unique primitive $3$-rd  (resp. $4$-th) root of unity which is equal to $q^4$ (resp. $q^3$) in $k$. The planar embedded Brauer tree of the principal $\ell$-block is given by Fig. \ref{F4}.

\begin{figure}[h!] \hskip 1cm \begin{pspicture}(12,4)

  \cnode[fillstyle=solid,fillcolor=black](4.5,2){5pt}{A2}
    \cnode(4.5,2){8pt}{A}
  \cnode(6,2){5pt}{B}
  \cnode(7.5,2){5pt}{C}
  \cnode(9,2){5pt}{D}
  \cnode(10.5,2){5pt}{E}
  \cnode(12,2){5pt}{F}  
  \cnode(0,2){5pt}{M}
  \cnode(1.3,2){5pt}{N}
  \cnode(3,2){5pt}{O}
  \cnode(5.25,3.3){5pt}{P}
  \cnode(3.75,3.3){5pt}{Q}  
  \cnode(5.25,0.7){5pt}{R}
  \cnode(3.75,0.7){5pt}{S}

  \ncline[nodesep=0pt]{A}{B}\naput[npos=1.1]{$\vphantom{\big(_p} \mathrm{St}$}
  \ncline[nodesep=0pt]{B}{C}\naput[npos=1.1]{$\vphantom{\Big(}\phi_{4,13}$}
  \ncline[nodesep=0pt]{C}{D}\naput[npos=1.1]{$\vphantom{\Big(}  \phi_{6,6}''$}
  \ncline[nodesep=0pt]{D}{E}\naput[npos=1.1]{$\vphantom{\Big(}\phi_{4,1}$}
  \ncline[nodesep=0pt]{E}{F}\naput[npos=1.15]{$\vphantom{\big(_p} 1$}
  \ncline[nodesep=0pt]{M}{N}\naput[npos=-0.1]{$\vphantom{\Big(}B_{2,1} $}
  \ncline[nodesep=0pt]{N}{O}\naput[npos=-0.1]{$\vphantom{\Big(}B_{2,r} $}
  \ncline[nodesep=0pt]{O}{A}\naput[npos=-0.1]{$\vphantom{\Big(} B_{2,\varepsilon} $}
\ncline[nodesep=0pt]{A}{P} \ncput[npos=1.7]{$F_4[\mathrm{i}]$}
\ncline[nodesep=0pt]{A}{Q}\ncput[npos=1.7]{$F_4[\theta]$}
\ncline[nodesep=0pt]{A}{R}\ncput[npos=1.7]{$F_4[-\mathrm{i}]$}
\ncline[nodesep=0pt]{A}{S}\ncput[npos=1.7]{$F_4[\theta^2]$}

\end{pspicture}

\vskip 0.2cm

\caption{Brauer tree of the principal $\Phi_{12}$-block of $F_4$}
\label{F4}

\end{figure}
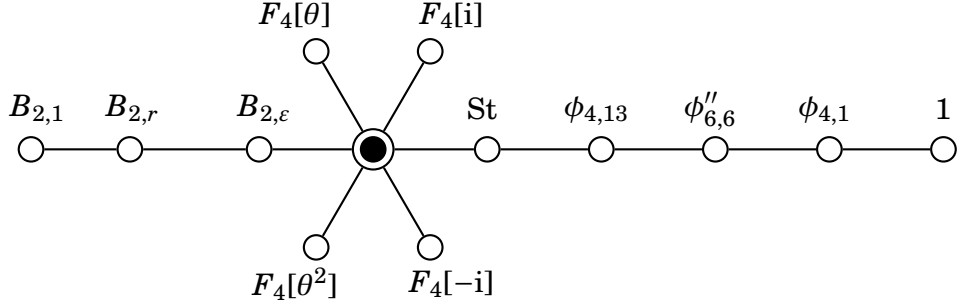

\mk

\noindent \textbf{\thesubsection.3 Type $E_7$.} The Coxeter case corresponds to prime numbers $\ell$ dividing $\Phi_{18}(q) = q^6-q^3 +1$. The class of $q$ in $k$ is a primitive $18$-th root of unity. We fix a square root $\sqrt q \in \mathcal{O}$ of $q$. We have denoted by $\theta$ (resp. $\mathrm{i}$) the unique primitive $3$-rd  (resp. $4$-th) root of unity which is equal to $q^6$ (resp. $(\sqrt q)^{9}$) in $k$. The planar embedded Brauer tree of the principal $\ell$-block is given by Fig. \ref{E7}.

\mk

\noindent \textbf{\thesubsection.4 Type $E_8$.} The Coxeter case corresponds to prime numbers $\ell$ dividing $\Phi_{30}(q) = q^8 + q^7 - q^5-q^4-q^3 +q  +1$. The class of $q$ in $k$ is a primitive $30$-th root of unity. We fix a square root $\sqrt q \in \mathcal{O}$ of $q$. We have denoted by $\theta$ (resp. $\mathrm{i}$, resp. $\zeta$) the unique primitive $3$-rd  (resp. $4$-th, resp. $5$-th) root of unity which is equal to $q^{10}$ (resp. $(\sqrt q)^{15}$, resp. $q^6$) in $k$. The planar embedded Brauer tree of the principal $\ell$-block is given by Fig. \ref{E8}.

\begin{rmk}\emph{(Communicated by David Craven)} From the Coxeter case in $E_7$ one can also deduce the Brauer trees of the principal $\Phi_{18}$-block of $E_8$ and its Alvis-Curtis dual. Assume $q$ has order $18$ modulo $\ell$. Then there exists an $F$-stable parabolic subgroup $\P = \L \U$ of $\G$ with $F$-stable Levi complement $\L$ such that  $\ell \nmid [G:L]$ (take $\L$ to be the centraliser of a $\Phi_{18}$-torus).  Let $c$ (resp. $b$) be the principal $\ell$-block of $L$ (resp. $G$). Then $b(\mathcal{O} G/U) c$ is a finitely generated $(b\mathcal{O}G, c\mathcal{O}L)$-bimodule that is projective as a $b\mathcal{O}G$-module and as a right $c\mathcal{O}L$-module. Moreover one can check that the functor $b (K G/U) c \otimes_{KL} -$ induces a bijection between the irreducible characters in $c$ and $b$. By \cite[Th\'eor\`eme 0.2]{Bro1}, we deduce that the functor $b \mathcal{O} G/U c \otimes_{\mathcal{O}L} -$ induces a Morita equivalence between $c\mathcal{O}L$ and $b\mathcal{O}G$. In particular we obtain the planar embedded Brauer tree of the principal $\Phi_{18}$-block of $E_8$ from the tree of the principal $\Phi_{18}$-block of $E_7$. The same argument applies to the $\ell$-block of $G$ containing the Steinberg character.
\end{rmk}

\begin{landscape}

\begin{figure}[h!] \hskip 1cm \begin{pspicture}(18,9)

  \cnode[fillstyle=solid,fillcolor=black](6,2){5pt}{A2}
    \cnode(6,2){8pt}{A}
  \cnode(7.5,2){5pt}{B}
  \cnode(9,2){5pt}{C}
  \cnode(10.5,2){5pt}{D}
  \cnode(12,2){5pt}{E}
  \cnode(13.5,2){5pt}{F}
  \cnode(15,2){5pt}{G}
  \cnode(16.5,2){5pt}{H}
  \cnode(18,2){5pt}{K}  
  \cnode(0,2){5pt}{L}
  \cnode(1.5,2){5pt}{M}
  \cnode(3,2){5pt}{N}
  \cnode(4.5,2){5pt}{O}
  \cnode(6.75,3.3){5pt}{P}
  \cnode(5.25,3.3){5pt}{Q}  \cnode(4.5,4.6){5pt}{Q2}
  \cnode(6.75,0.7){5pt}{R}
  \cnode(5.25,0.7){5pt}{S} \cnode(4.5,-0.6){5pt}{S2}

  \ncline[nodesep=0pt]{A}{B}\naput[npos=1.1]{$\vphantom{\big(_p} \mathrm{St}$}
  \ncline[nodesep=0pt]{B}{C}\naput[npos=1.1]{$\vphantom{\Big(}\phi_{7,46}$}
  \ncline[nodesep=0pt]{C}{D}\naput[npos=1.1]{$\vphantom{\Big(}  \phi_{21,33}$}
  \ncline[nodesep=0pt]{D}{E}\naput[npos=1.1]{$\vphantom{\Big(}\phi_{35,22}$}
  \ncline[nodesep=0pt]{E}{F}\naput[npos=1.1]{$\vphantom{\Big(} \phi_{35,13}$}
  \ncline[nodesep=0pt]{F}{G}\naput[npos=1.1]{$\vphantom{\Big(} \phi_{21,6}$}
  \ncline[nodesep=0pt]{G}{H}\naput[npos=1.1]{$\vphantom{\Big(} \phi_{7,1}$}
  \ncline[nodesep=0pt]{H}{K}\naput[npos=1.15]{$\vphantom{\big(_p} 1 $}
  \ncline[nodesep=0pt]{L}{M}\naput[npos=-0.1]{$\vphantom{\Big(} D_{4,1}$}
  \ncline[nodesep=0pt]{M}{N}\naput[npos=-0.1]{$\vphantom{\Big(}D_{4,r} $}
  \ncline[nodesep=0pt]{N}{O}\naput[npos=-0.1]{$\vphantom{\Big(}D_{4,r\varepsilon} $}
  \ncline[nodesep=0pt]{O}{A}\naput[npos=-0.1]{$\vphantom{\Big(} D_{4,\varepsilon} $}
\ncline[nodesep=0pt]{A}{P} \ncput[npos=1.7]{$E_7[\mathrm{i}]$}
\ncline[nodesep=0pt]{A}{Q}\naput[npos=1.5]{$E_6[\theta]_\varepsilon$}
\ncline[nodesep=0pt]{Q}{Q2}\naput[npos=1.5]{$E_6[\theta]_1$}
\ncline[nodesep=0pt]{A}{R}\ncput[npos=1.7]{$E_7[-\mathrm{i}]$}
\ncline[nodesep=0pt]{A}{S}\nbput[npos=1.5]{$E_6[\theta^2]_\varepsilon$}
\ncline[nodesep=0pt]{S}{S2}\nbput[npos=1.5]{$E_6[\theta^2]_1$}

\end{pspicture}

\vskip 1.5cm

\caption{Brauer tree of the principal $\Phi_{18}$-block of $E_7$}
\label{E7}

\end{figure}
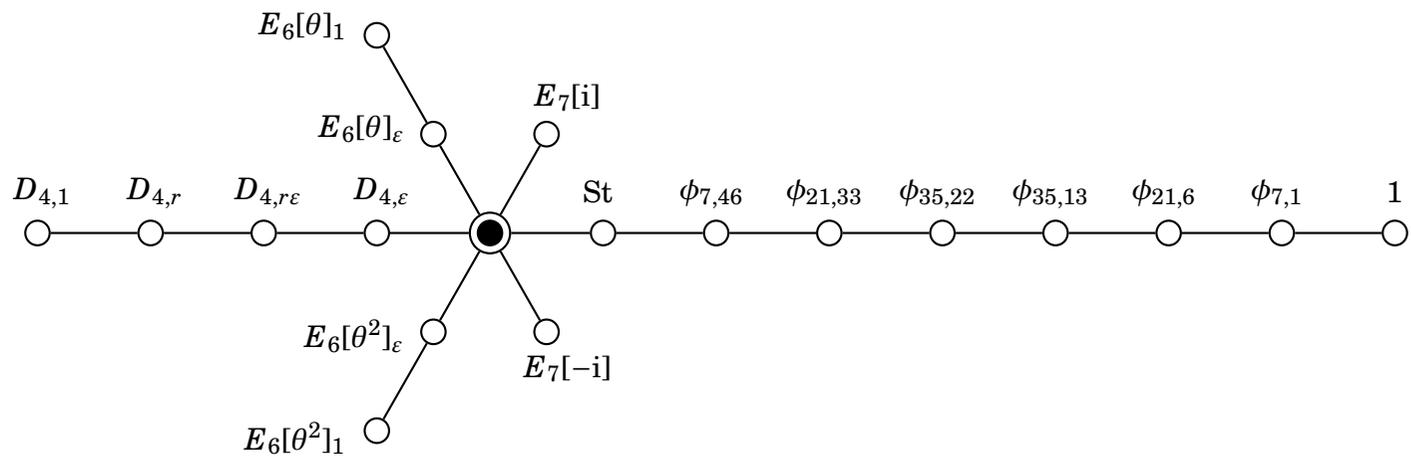

\end{landscape}

\begin{landscape}

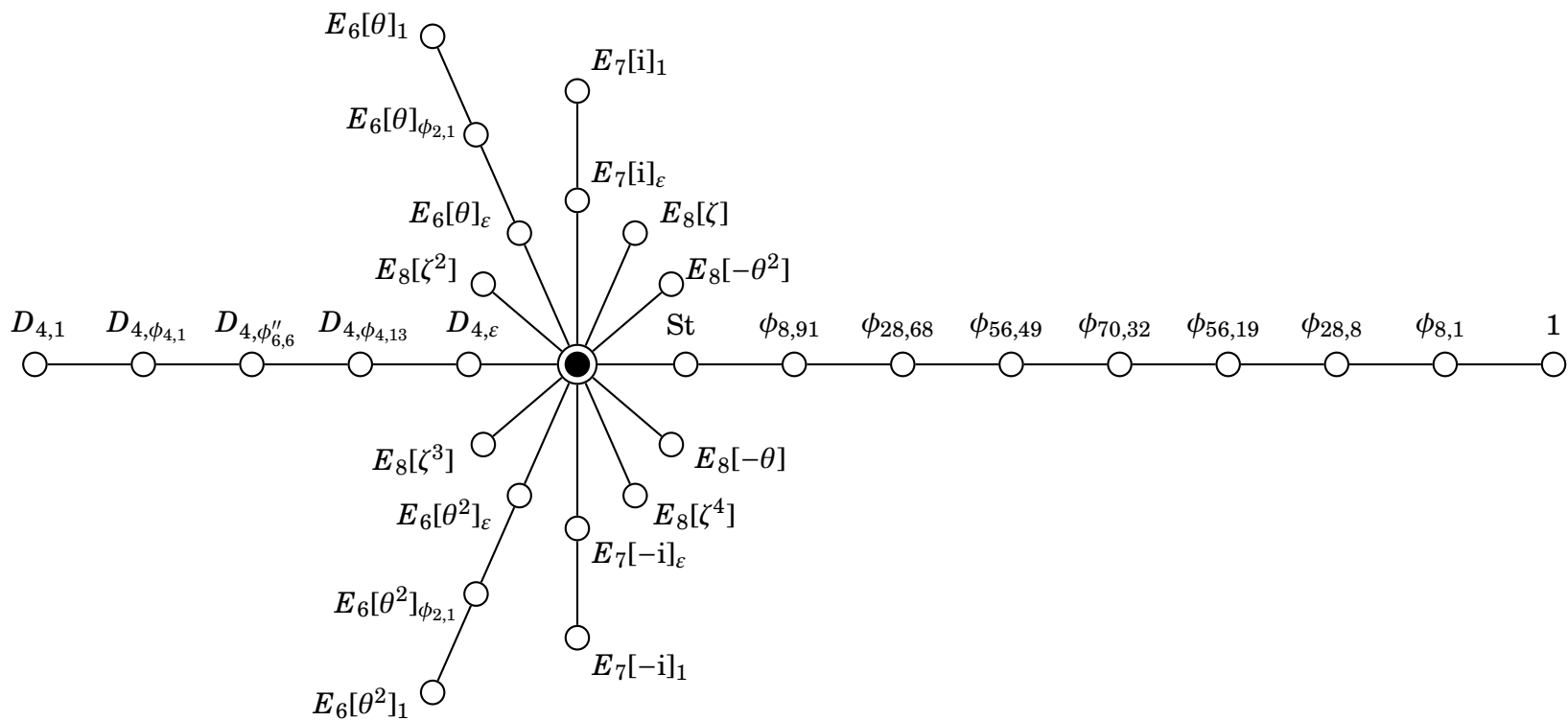
\begin{figure}[h!] \hskip 1cm\begin{pspicture}(19.5,8)

  \cnode[fillstyle=solid,fillcolor=black](6,2){5pt}{A2}
  \cnode(6,2){8pt}{A}
  \cnode(7.5,2){5pt}{B}
  \cnode(9,2){5pt}{C}
  \cnode(10.5,2){5pt}{D}
  \cnode(12,2){5pt}{E}
  \cnode(13.5,2){5pt}{F}
  \cnode(15,2){5pt}{G}
  \cnode(16.5,2){5pt}{H}
  \cnode(18,2){5pt}{K}  
  \cnode(0,2){5pt}{L}
  \cnode(1.5,2){5pt}{M}
  \cnode(3,2){5pt}{N}
  \cnode(4.5,2){5pt}{O}
  \cnode(-1.5,2){5pt}{P}
  \cnode(19.5,2){5pt}{Q}
  \cnode(7.3,3.1){5pt}{R1}
  \cnode(6.8,3.8){5pt}{R2}
  \cnode(6,4.25){5pt}{R3}  \cnode(6,5.75){5pt}{R32}
  \cnode(5.2,3.8){5pt}{R4}   \cnode(4.6,5.15){5pt}{R42}  \cnode(4,6.5){5pt}{R43}
 \cnode(4.7,3.1){5pt}{R5}
  \cnode(7.3,0.9){5pt}{S1}
  \cnode(6.8,0.2){5pt}{S2}
  \cnode(6,-0.25){5pt}{S3}  \cnode(6,-1.75){5pt}{S32}
  \cnode(5.2,0.2){5pt}{S4}   \cnode(4.6,-1.15){5pt}{S42}  \cnode(4,-2.5){5pt}{S43}
 \cnode(4.7,0.9){5pt}{S5}

   \ncline[nodesep=0pt]{A}{B}\naput[npos=1.1]{$\vphantom{\big(_{p}} \mathrm{St}$}
  \ncline[nodesep=0pt]{B}{C}\naput[npos=1.1]{$\vphantom{\Big(}\phi_{8,91}$}
  \ncline[nodesep=0pt]{C}{D}\naput[npos=1.1]{$\vphantom{\Big(}  \phi_{28,68}$}
  \ncline[nodesep=0pt]{D}{E}\naput[npos=1.1]{$\vphantom{\Big(}\phi_{56,49}$}
  \ncline[nodesep=0pt]{E}{F}\naput[npos=1.1]{$\vphantom{\Big(} \phi_{70,32}$}
  \ncline[nodesep=0pt]{F}{G}\naput[npos=1.1]{$\vphantom{\Big(} \phi_{56,19}$}
  \ncline[nodesep=0pt]{G}{H}\naput[npos=1.1]{$\vphantom{\Big(} \phi_{28,8}$}
  \ncline[nodesep=0pt]{H}{K}\naput[npos=1.1]{$\vphantom{\Big(} \phi_{8,1} $}
  \ncline[nodesep=0pt]{L}{M}\naput[npos=-0.1]{$\vphantom{\Big(} D_{4,\phi_{4,1}} $}  
  \ncline[nodesep=0pt]{M}{N}\naput[npos=-0.1]{$\vphantom{\Big(} D_{4,\phi_{6,6}''} $}
  \ncline[nodesep=0pt]{N}{O}\naput[npos=-0.1]{$\vphantom{\Big(}D_{4,\phi_{4,13}} $}
  \ncline[nodesep=0pt]{O}{A}\naput[npos=-0.1]{$\vphantom{\Big(}D_{4,\varepsilon}  $}
  \ncline[nodesep=0pt]{P}{L}\naput[npos=-0.1]{$\vphantom{\Big(}D_{4,1}  $}
  \ncline[nodesep=0pt]{K}{Q}\naput[npos=1.15]{$\vphantom{\big(_p} 1 $}
  \ncline[nodesep=0pt]{A}{R1}\ncput[npos=1.4]{$\hphantom{aaiaaa}E_8[-\theta^2]$}  
  \ncline[nodesep=0pt]{A}{R2}  \ncput[npos=1.3]{$\hphantom{aaaaaa}E_8[\zeta]$}  
  \ncline[nodesep=0pt]{A}{R3}\nbput[npos=1.3]{$E_7[\mathrm{i}]_{\varepsilon}$}  
  \ncline[nodesep=0pt]{R3}{R32}\nbput[npos=1.5]{$E_7[\mathrm{i}]_{1}$} 
  \ncline[nodesep=0pt]{A}{R4}  \ncput[npos=1.3]{$E_6[\theta]_\varepsilon \hphantom{aaaaaaa}$}
   \ncline[nodesep=0pt]{R4}{R42}  \ncput[npos=1.3]{$E_6[\theta]_{\phi_{2,1}}\hphantom{aaaaaaaa}$}
   \ncline[nodesep=0pt]{R42}{R43}  \ncput[npos=1.3]{$E_6[\theta]_1\hphantom{aaaaaaa}$}
  \ncline[nodesep=0pt]{A}{R5}  \ncput[npos=1.4]{$E_8[\zeta^2]\hphantom{aiaaaa}$}
   \ncline[nodesep=0pt]{A}{S1} \ncput[npos=1.4]{$\hphantom{aaaaaa}E_8[-\theta]$}  
  \ncline[nodesep=0pt]{A}{S2}  \ncput[npos=1.3]{$\hphantom{aaaaaa}E_8[\zeta^4]$}  
  \ncline[nodesep=0pt]{A}{S3} \naput[npos=1.3]{$E_7[-\mathrm{i}]_{\varepsilon}$}  
  \ncline[nodesep=0pt]{S3}{S32}\naput[npos=1.5]{$E_7[-\mathrm{i}]_{1}$} 
  \ncline[nodesep=0pt]{A}{S4}  \ncput[npos=1.3]{$E_6[\theta^2]_\varepsilon \hphantom{aaaaiaaa}$}
   \ncline[nodesep=0pt]{S4}{S42}  \ncput[npos=1.3]{$E_6[\theta^2]_{\phi_{2,1}}\hphantom{aaiaaaaaa}$}
   \ncline[nodesep=0pt]{S42}{S43}  \ncput[npos=1.3]{$E_6[\theta^2]_1\hphantom{aaaaaiaa}$}  \ncline[nodesep=0pt]{A}{S5}   \ncput[npos=1.4]{$E_8[\zeta^3]\hphantom{aaaaaa}$}

\end{pspicture}

\vskip 3.5cm

\caption{Brauer tree of the principal $\Phi_{30}$-block of $E_8$}
\label{E8}

\end{figure}

\end{landscape}

\def\cprime{$'$}

\begin{small}\textsc{Olivier Dudas: Mathematical Institute,
University of Oxford, 24-29 St Giles', Oxford, OX1 3LB, UK.}

\emph{E-mail address}: \texttt{dudas@maths.ox.ac.uk}

\mk

\textsc{Rapha\"el Rouquier: Mathematical Institute,
University of Oxford, 24-29 St Giles', Oxford, OX1 3LB, UK
and Department of Mathematics, UCLA, Box 951555,
Los Angeles, CA 90095-1555, USA.}

\emph{E-mail address}: \texttt{rouquier@maths.ox.ac.uk}

\end{small}

\end{document}